\documentclass[11pt]{article}
\usepackage[a4paper, total={6in, 8in}]{geometry}
\usepackage{enumerate}
\usepackage{amsmath}
\usepackage{amssymb,latexsym}
\usepackage{amsthm}
\usepackage{color}
\usepackage{cancel}
\usepackage{graphicx}
\usepackage{cite}
\usepackage{changes}
\usepackage{lineno}
\usepackage{hyperref}
\usepackage{comment}
\usepackage{amscd}
\usepackage{cleveref}
\usepackage{ulem}

\DeclareMathOperator{\C}{\mathcal{C}}

\DeclareMathOperator{\Aut}{Aut}
\DeclareMathOperator{\End}{End}
\DeclareMathOperator{\Gal}{Gal}

\DeclareMathOperator{\rk}{rk}

\newtheorem{theorem}{Theorem}[section]

\newtheorem{lemma}[theorem]{Lemma}
\newtheorem{corollary}[theorem]{Corollary}
\newtheorem{definition}[theorem]{Definition}
\newtheorem{proposition}[theorem]{Proposition}

\newtheorem{example}[theorem]{Example}

\newtheorem{remark}[theorem]{Remark}

\newcommand{\fqn}{\mathbb{F}_{q^n}}

\newcommand{\cD}{{\mathcal D}}

\newcommand{\F}{{\mathbb F}}
\newcommand{\KK}{{\mathbb K}}
\newcommand{\SSS}{{\mathbb S}}
\newcommand{\NN}{{\mathbb N}}

\newcommand{\GL}{\hbox{{\rm GL}}}
\newcommand{\K}{{\mathbb K}}
\newcommand{\A}{{\mathbb A}}
\newcommand{\LL}{{\mathbb L}}
\newcommand{\D}{{\mathbb D}}

\newcommand{\fq}{{\mathbb F}_{q}}

\renewcommand{\mod}{\hbox{{\rm mod}\,}}

\newcommand{\N}{\mathrm{N}}

\newcommand{\lid}{\mathcal{I}_{\ell}}
\newcommand{\rid}{\mathcal{I}_{r}}
\newcommand{\cen}{\mathrm{C}}
\newcommand{\Fix}[2]{{#1}^{#2}}
\newcommand{\modr}{\mathrm{mod}_r}

\setcounter{tocdepth}{2}


\title{Quotients of skew polynomial rings: new constructions of division algebras and MRD codes}
\author{F.J. Lobillo, Paolo Santonastaso and John Sheekey}
\date{}

\begin{document}

\maketitle

\begin{abstract}
We achieve new results on skew polynomial rings and their quotients, including the first explicit example of a skew polynomial ring where the ratio of the degree of a skew polynomial to the degree of its bound is not extremal.
These methods lead to the construction of new (not necessarily associative) division algebras and maximum rank distance (MRD) codes over both finite and infinite division rings. In particular, we construct new non-associative division algebras whose right nucleus is a central simple algebra having degree greater than 1. Over finite fields, we obtain new semifields and MRD codes for infinitely many choices of parameters. These families extend and contain many of the best previously known constructions.
\end{abstract}

\noindent
\textbf{Keywords:} Skew polynomial ring; division algebra; semifield; rank-metric code.\\
\textbf{MSC2020:}  16S36; 17A35;  12K10; 11T71.


\section{Introduction}

In this paper, we study  {\it (not necessarily associative) division algebras}, also known as {\it semifields}, and {\it maximum rank distance (MRD) codes}. Both of these objects can be viewed as subsets of a matrix ring $M_n(\D)$ over an (associative) division ring $\D$ with special properties with respect to the rank function. We construct new examples and prove new results on both of these topics, by studying subsets of quotient spaces of {\it skew polynomial rings}.

Associative division algebras have been studied for both pure and applied reasons since Hamilton's famous discovery of the quaternions. Nonassociative division algebras have also attracted attention, in particular over finite fields, where they are also known as {\it semifields} \cite{lavrauw2011finite,gologlu2023}. Division algebras can be viewed as special cases of {\it rank-metric codes}. Rank-metric codes over finite fields have been employed in different contexts in communications and security. We suggest \cite{gorla2018codes,bartz2022rank}, for a general introduction to rank-metric codes, explaining their most important applications.

In \cite{roth1991maximum,roth1996tensor,augot2021rank}, the general theory of rank-metric
codes was studied over arbitrary Galois extensions. Rank metric codes have also been constructed over finite principal ideal rings \cite{kamche2019rank} and discretely valued rings \cite{elmaazouz2021}. The theory of rank-metric codes is also extended to a noncommutative ring context, producing interesting connections with space-time coding, see e.g. \cite{oggier2007cyclic, pumplun2011space, thompson2023division}. More precisely, the notion of rank-metric codes in $M_{n}(\D)$ has also been considered when $\D$ is not necessarily a field.

Quotients of skew polynomial rings were first used to construct associative (cyclic) division algebras by Jacobson \cite{jacobson1934,jacobson2009finite}. This was extended to nonassociative division algebras by Sandler \cite{sandler1962} and Petit \cite{petit1966certains}. More recently, new examples of MRD codes and division algebras have been constructed using skew polynomial rings in \cite{sheekey2020new} (for the case of finite fields) and \cite{thompson2023division} (for the case of infinite fields).

We postpone technical definitions and further references until later sections in order to more expeditiously state our setup and our main contributions.

Let $\KK$ be a field, and $\LL/\KK$ be a cyclic Galois extension of degree $n$. Consider a skew polynomial ring $R= \LL[x;\sigma]$, where $\sigma$ is a generator of the Galois group $\Gal(\LL/\K)$. Let $F(y)\in \KK[y]$ be an irreducible polynomial of degree $s\geq 1$. Then $F(x^n)$ generates a maximal two-sided ideal in $R$, and there exist integers $m, \ell$ and a division algebra $\D$ such that 
\begin{itemize}
\item $n=m\ell$;
    \item every irreducible divisor of $F(x^n)$ has degree $s\ell$; 
    \item $F(x^n)$ factors completely as a product of $m$ irreducibles of degree $s\ell$;
    \item $\D$ has dimension $s\ell^2$ over $\KK$,  dimension $\ell^2$ (and hence degree $\ell$) over its centre, and 
    \begin{equation*} 
    R_F := \frac{R}{RF(x^n)} \cong M_m(\D). 
\end{equation*}
\end{itemize}

Previous work has focused on the following cases: (i) the case $m=1$, $\ell=n$, where the quotient is itself an associative division algebra; (ii) the case $\ell=1$, $m=n$, where $\D$ is a field. The case $1<\ell<n$ has remained elusive. Indeed, we are not aware of any {\it explicit} examples in the literature of skew polynomial rings containing elements $F(x^n)$ such that $1<\ell<n$.

The goal of finding MRD codes equates to finding additively closed subsets of $R_F$ such that every nonzero element has rank at least $d$, for some integer $2\leq d\leq m$; the case $d=m$ equates to finding (not necessarily associative) division algebras. In the skew polynomial setting, this corresponds to finding maximum additively closed subsets of $R$ whose nonzero elements each have greatest common right divisor with $F(x^n)$ of degree at most $s\ell(m-d)$.

In this paper our main contributions are as follows:
\begin{itemize}
\item We give explicit examples of skew polynomial rings $R$ and elements $F$ such that $1<\ell<n$ (cf. \Cref{sec:Javier}). We also show that the value $\ell$ depends on the element $F$, rather than on the ring $R$. We establish theoretical results on skew polynomial rings by presenting an explicit method for computing the bound of an element in $R$ (cf. \Cref{sec:mzlm}).
    \item We extend the constructions of MRD codes and division algebras from \cite{sheekey2020new} and \cite{thompson2023division} to the case $1<\ell<n$, and compute the relevant parameters (cf. \Cref{sec:firstfamilyextended} and \Cref{sec:parfirstfamilyextended}).
\item We extend constructions of Trombetti-Zhou/Hughes-Kleinfeld to a larger family of MRD codes/division algebras/semifields, and compute the relevant parameters (cf. \Cref{sec:secondfamily} and \Cref{sec:parsecondfamily}).
      \item 
In the case where $\LL$ is infinite, we construct explicit examples of nonassociative division algebras whose right nucleus is a central division algebra, which is not a field (cf. \Cref{sec:comparison}). In the case where $\LL$ is finite, we construct further new infinite families of MRD codes and semifields, together with their parameters (cf. Section \ref{Sec:finitefieldscase}).
\end{itemize}

We also prove some necessary technical results which allow us to prove the newness of our constructions.

\section{Skew polynomial rings}

We will construct division algebras/semifields and MRD codes using {\it skew polynomial rings}, extending works such as \cite{petit1966certains,pumplun2017finite,thompson2023division,sheekey2020new}. We recall first the basic properties of skew polynomial rings. The study of these rings dates back to the seminal paper of Ore \cite{ore1933theory}, and they are one of the most prominent examples of non-commutative integral domains. In the rest of the paper, $\KK$ will be a field, $\LL/\KK$ will be a cyclic Galois extension of degree $n$ and we consider $\sigma$ a generator of the Galois group $\Gal(\LL/\K)$. We use the notation \(\Fix{\LL}{\sigma}\) to denote the fixed field of \(\LL\) by \(\sigma\). In our setting, \(\Fix{\LL}{\sigma} = \K\). The image of \(a \in \LL\) by \(\sigma\) is denoted by \(a^\sigma\).

\subsection{Basic properties}
The \textbf{skew polynomial ring}  $\LL[x;\sigma,\delta]$ is a ring where
\begin{itemize}
\item
$\delta$ is a \textbf{$\sigma$-derivation} of $\LL$; that is, an additive map satisfying $\delta(ab) = a^\sigma \delta(b)+\delta(a) b$;
\item
the elements of $\LL[x;\sigma,\delta]$ are polynomials in the indeterminate $x$; addition is polynomial addition;
\item
multiplication is $\K$-bilinear, and satisfies $xa = a^{\sigma}x+{\delta}(a)$ for all $a\in \LL$.
\end{itemize}

This rings are also known as Ore extension of $\LL$, in honor of O. Ore, who was the first to systematically investigate the general case \cite{ore1933theory}. 

In the rest of this paper we will restrict ourselves to skew polynomial ring with trivial derivation $\delta=0$, i.e. $\LL[x;\sigma,0]=:\LL[x;\sigma]$ (such rings are sometimes referred also to as \emph{twisted polynomial rings}). Note that, since $\LL$ is a field, and $\sigma \neq id$, we get that every skew polynomial ring is isomorphic to a twisted polynomial ring, see e.g. \cite[Proposition 1.1.20]{jacobson2009finite}.

  For a skew polynomial ring the following well-known properties hold, see e.g. \cite[\S 1.1]{jacobson2009finite} and \cite[Chapter 3]{jacobson1943}.
\begin{theorem}
\label{thm:skewprop}
Let $R=\LL[x;\sigma]$. Then
\begin{enumerate}
\item[(i)]
$R$ is both {\it left- and right-Euclidean domain} with the usual degree function, and $R$ is Principal Ideal Domain on both sides;
\item[(ii)]
the centre $Z(R)$ of $R$ is $\K[x^n;\sigma]\cong \K[y]$;
\item[(iii)]
if $f \in R$ has factorisations $f=g_1\cdots g_k=h_1\cdots h_m$ into irreducible elements, then $k=m$ and there is a permutation $\pi$ of $\{1,\ldots,k\}$ such that $R/Rg_{\pi(i)} \cong R/Rh_i$, for all $i$. In particular, $\deg(g_{\pi(i)}) = \deg(h_i)$, for all $i$.
\end{enumerate}
\end{theorem}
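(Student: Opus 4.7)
The plan is to treat the three parts using standard techniques for noncommutative principal ideal domains, with the common thread that $\sigma$ is an automorphism of $\LL$ of order exactly $n$.

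For (i), I would run the usual division algorithm on leading terms. Given $f = a_m x^m + (\text{lower})$ and $g = b_n x^n + (\text{lower})$ with $g \neq 0$ and $m \geq n$, subtracting $a_m(b_n^{\sigma^{m-n}})^{-1}x^{m-n}\cdot g$ from $f$ cancels the leading term and strictly drops the degree; iterating produces a right Euclidean division $f = qg + r$ with $\deg r < \deg g$. This step uses precisely that $\sigma$ is an automorphism, so that $b_n^{\sigma^{m-n}}$ is invertible in $\LL$. A symmetric computation yields left Euclidean division. The one-sided PID statements are then the standard consequence: any nonzero left (resp.\ right) ideal is generated by any element of least degree that it contains, via Euclidean reduction.

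For (ii), I would verify the two inclusions. Every element of $\K[x^n]$ is central because $x^n a = a^{\sigma^n} x^n = a x^n$ for $a \in \LL$ (using $\sigma^n = \mathrm{id}_\LL$) and because $\K = \Fix{\LL}{\sigma}$ commutes with $x$. Conversely, if $f = \sum a_i x^i \in Z(R)$, then the relation $xf = fx$ rewrites as $\sum a_i^\sigma x^{i+1} = \sum a_i x^{i+1}$, forcing $a_i \in \K$ for every $i$. The relation $af = fa$ for all $a \in \LL$ gives $a_i(a^{\sigma^i}-a) = 0$; since $\sigma|_\LL$ has order $n$, this forces $a_i = 0$ whenever $n \nmid i$. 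Hence $Z(R) = \K[x^n]$, and since $R$ is a domain the central element $x^n$ is transcendental over $\K$, so the evaluation map $\K[y] \to \K[x^n;\sigma]$ sending $y \mapsto x^n$ is an isomorphism.

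For (iii), I would translate the factorization claim into Jordan--H\"older for the left $R$-module $R/Rf$. Given an irreducible factorization $f = g_1 g_2 \cdots g_k$, set $u_i := g_{i+1}\cdots g_k$ (so $u_0 = f$ and $u_k = 1$). Right multiplication by $u_{i+1}$ is a left $R$-module isomorphism $R \xrightarrow{\sim} Ru_{i+1}$ sending $Rg_{i+1}$ onto $Ru_i$, and hence induces $Ru_{i+1}/Ru_i \cong R/Rg_{i+1}$. Irreducibility of $g_{i+1}$, combined with the left-PID structure from part (i), makes $R/Rg_{i+1}$ a simple left $R$-module: its proper submodules correspond to non-unit right divisors of $g_{i+1}$, of which there are none up to associates. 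Therefore
\[
0 \subset Ru_1/Rf \subset Ru_2/Rf \subset \cdots \subset R/Rf
\]
is a composition series of length $k$ with factors $R/Rg_i$. Applying the same construction to a second factorization $f = h_1 \cdots h_m$ and invoking Jordan--H\"older for modules of finite length yields $k = m$ and a permutation $\pi$ with $R/Rg_{\pi(i)} \cong R/Rh_i$. The degree equality $\deg(g_{\pi(i)}) = \deg(h_i)$ follows because $\dim_\LL(R/Rg) = \deg g$ (witnessed by the basis $1, x, \ldots, x^{\deg g - 1}$) is an $R$-module invariant.

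The only care points are modest: getting the left-versus-right orientation right in the simplicity argument for $R/Rg$, and noting that Jordan--H\"older does apply because $R/Rf$ has finite $\LL$-dimension, and hence finite length, bounded by $\deg f$.
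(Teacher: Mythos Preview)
Your proof is correct in all three parts: the leading-term cancellation for the Euclidean algorithm, the two-step computation of the centre via commutation with $x$ and with $\LL$, and the Jordan--H\"older argument for the factorization statement are all carried out accurately. The paper itself does not prove this theorem; it is stated as a well-known result with references to Jacobson, so there is no in-paper proof to compare against, but your argument is the standard one those references contain.
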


By $(i)$ of \Cref{thm:skewprop}, we get that every left (right) ideal of $R$ is of the form $Rf=\{g(x)f(x) \colon g(x) \in R\}$, ($fR=\{f(x)g(x) \colon g(x) \in R\}$) for some $f(x) \in R$. An element $g$ in $R$ is said to be \textbf{two-sided} if $Rg=gR$. By e.g. \cite[Theorem 1.1.22]{jacobson2009finite}, $g \in R$ is two-sided if, and only if, $g(x)=dG(x^n)x^m$, for some $d \in \LL$, $G(y) \in \K[y]$, and $m \geq 0$.

\begin{definition}
    A \textbf{bound} of a non zero $f \in R$, is any two-sided polynomial $f^* \in R$ such that $Rf^*=f^*R$ is the largest two-sided ideal contained in $Rf$ or, equivalently 
\[Rf^*=\mathrm{Ann}_R(R/Rf)=\{g \in R \colon (ga+Rf)=0+Rf, \mbox{ for any }a+Rf \in R/Rf\},\] where $\mathrm{Ann}_R(R/Rf)$ denotes the (left) annihilator of $R/Rf$ in $R$.
\end{definition} 

So a bound of \(f\) is a minimal two-sided left multiple. With \(R\) being a domain, it is an easy exercise to check that a right divisor of a two-sided element is also a left divisor. So the bound could also been defined by means of right ideals; it is a left-right symmetric property. 

If $f^* \neq 0$, the polynomial $f$ is said to be \textbf{bounded}. Since $\sigma$ is assumed to be an automorphism of $\LL$ and $R$ has finite dimension $n^2$ over its centre $Z(R)=\K[x^n]$, then all non zero $f \in R$ are bounded and
\begin{equation} \label{eq:boundbound}
    \deg(f^*) \leq n \deg(f),
\end{equation} see \cite[Theorem 2.9]{gomez2019computing}.

For a non constant polynomial $f(x)$ with nonzero constant coefficient, we have that any bound $f^*$ of $f$ is of the form $dF(x^n)$ for some $d \in \LL$ and a monic polynomial $F(y) \in \K[y]$, and the constant coefficient of $F(y)$ is non zero; see \cite[Lemma 2.11]{gomez2019computing}. In such a case, we refer to the bound of the polynomial $f$ as the unique monic central polynomial $f^*(x)=F(x^n)$.
Moreover, by using \eqref{eq:boundbound}, we get
\begin{equation} \label{eq:boundminimalcentral}
   \deg(F(y)) \leq \deg(f). 
\end{equation}

\begin{remark}\label{centralleftrightsymmetric}
In the literature, the polynomial $F(y)$ is sometimes also referred as the {\em minimal central left multiple} of $f\in R$. Indeed, it is the unique monic polynomial $F(y)$ of minimal degree in $Z(R)$ such that $F(x^n) = gf$ for some $g\in R$, see e.g. \cite{giesbrecht1998factoring,sheekey2020new,thompson2023division}. Anyway, since \(F(x^n) = gf\) is central, an easy computation shows that \(F(x^n) = fg\). Hence being central multiple is a left-right symmetric property, we could call \(F(y)\) the minimal central multiple. 
\end{remark}

We also recall that if $f$ is an irreducible element of $R$ of degree at least $1$ with non zero constant coeffient, then $f^*(x)=F(x^n)$ is such that $F(y)$ is an irreducible element of $\K[y]$.
\begin{lemma} [see e.g. \textnormal{\cite[Remark 4.3.]{gomez2019computing}}]\label{lm:irreduciblemaximal}
Let $F(y) \in \K[y]$, with $F(y) \neq y$. Then $F(y)$ is irreducible if and only if the two-sided ideal $RF(x^n)$ of $R$ is maximal.
\end{lemma}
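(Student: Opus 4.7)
The plan is to classify all two-sided ideals of $R$ containing $RF(x^n)$ and show they correspond to monic divisors of $F(y)$ in $\K[y]$; under this correspondence, maximality of $RF(x^n)$ becomes irreducibility of $F(y)$.

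First I would invoke the classification recalled just before the lemma: every nonzero two-sided element of $R$ has the form $d\,G(x^n)\,x^m$ with $d \in \LL^*$, $G(y) \in \K[y]$ monic, and $m \geq 0$. Assume $F(y)$ is irreducible with $F(y) \neq y$, so $F(0) \neq 0$. For any two-sided ideal $I$ with $RF(x^n) \subseteq I \subsetneq R$, since $\LL^*$ consists of units of $R$ I may take a generator of the form $G(x^n)\,x^m$, giving $I = R\,G(x^n)\,x^m$. Writing $F(x^n) = H(x)\,G(x^n)\,x^m$ and using that $xa = a^\sigma x$ preserves $x$-degree — so the constant term of a product $A(x)B(x)$ in $R$ is $A(0)B(0)$ — a comparison of constant coefficients forces $m = 0$: otherwise the right-hand side has constant term $0$, contradicting $F(0) \neq 0$.

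Now $F(x^n) = H(x)\,G(x^n)$ with $G \in \K[y]$. To deduce $G(y) \mid F(y)$ in $\K[y]$, I would perform Euclidean division $F(y) = Q(y) G(y) + S(y)$ with $\deg S < \deg G$. Substituting $y = x^n$ and using centrality of $G(x^n), Q(x^n)$ yields $S(x^n) = (H(x) - Q(x^n))\,G(x^n)$; a comparison of $x$-degrees (left: $n\deg S < n \deg G$; right: either zero or at least $n \deg G$) forces $S = 0$. Irreducibility of $F$ then leaves only $G = 1$ (whence $I = R$, excluded) or $G = F$ (whence $I = RF(x^n)$). Hence $RF(x^n)$ is a maximal two-sided ideal.

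For the converse I would argue the contrapositive: if $F(y) = G_1(y) G_2(y)$ factors nontrivially with both $G_i$ of positive degree, then $G_1(x^n) \in Z(R)$ generates a two-sided ideal $RG_1(x^n)$ lying strictly between $RF(x^n)$ and $R$, with strictness on both sides following from degree comparisons (since $1 \leq \deg G_1 < \deg F$, no element of $R$ of the form $H(x) G_1(x^n)$ can equal $1$ or have degree below $n \deg G_1$). The main technical obstacle is the first step: the hypothesis $F(y) \neq y$, which together with irreducibility gives $F(0) \neq 0$, is precisely what rules out a spurious $x^m$ contribution in generators of two-sided ideals above $RF(x^n)$ — without it, $F(y) = y$ is irreducible yet $Rx^n$ sits properly inside the two-sided ideal $Rx$.
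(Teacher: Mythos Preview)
Your argument is correct. The paper does not actually prove this lemma; it merely cites \cite[Remark~4.3]{gomez2019computing} and states the result. Your proof supplies the natural direct argument: you use the classification of two-sided elements (recalled in the paper just before the lemma) together with the fact that $R$ is a two-sided PID to reduce the lattice of two-sided ideals above $RF(x^n)$ to the lattice of monic divisors of $F(y)$ in $\K[y]$. The key technical steps --- ruling out an $x^m$ factor via the constant term (this is exactly where $F(y)\neq y$ enters), and showing $G(y)\mid F(y)$ by Euclidean division in $\K[y]$ followed by a degree comparison in $R$ --- are all sound. One small point you use implicitly and might make explicit: a two-sided ideal of $R$ is generated (as a one-sided ideal) by a two-sided element; this is standard for two-sided PIDs and is implicit in the Jacobson reference the paper cites, but it is worth a sentence since it is what lets you invoke the $d\,G(x^n)\,x^m$ classification for the generator of $I$.
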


Further background and facts about skew polynomial rings can be found in \cite{giesbrecht1998factoring,jacobson2009finite,gomez2019computing}.

\subsection{Quotients of skew polynomial rings}
\label{sec:skewframe}

Let $F(y)$ be an irreducible polynomial of $\K[y]$ having degree $s\geq 1$, with $F(y) \neq y$. We consider the quotient ring
\[ R_F:=\frac{R}{RF(x^n)}=
\left\{ a_0+a_1x+\cdots +a_{ns-1}x^{sn-1} + RF(x^n) \colon a_0,\ldots,a_{ns-1} \in \LL 
\right\}. 
\]
As a consequence of \cite[Lemma 4.2]{gomez2019computing}, the centre of $R_F$ is 
\[
E_F:= Z\left(\frac{R}{RF(x^n)}\right) \cong \frac{\K[y]}{(F(y))},
\]
and any element in $E_F$ is of the form $a(x)+RF(x^n)$, for some $a(x) \in Z(R)$.

Since $F(y)$ is an irreducible polynomial, by Lemma \ref{lm:irreduciblemaximal}, we have that  $E_F$ is a field such that $[E_F:\K]=\deg(F)=s$. Moreover, $R_F$ is a simple algebra over $E_F$ having dimension $n^2$ and dimension $n^2s$ over $\K$, see \cite[\S 2 and \S 4]{gomez2019computing} for details.

Therefore, by Artin-Wedderbun's Theorem, $R_F$ is isomorphic to $M_m(\D)$, for a certain positive integer $m$ and a central $E_F$-division algebra $\D$. In this case, the division algebra $\D$ corresponds to the \emph{eigenring} of an irreducible factor $f$ of $F(x^n)$ in $R$. 

\begin{definition}\label{def:eigenring}
For a nonzero polynomial $f \in R$, the largest subring $I(f)$ of $R$ in which the $Rf$ is a two-sided ideal, is called the \textbf{left idealiser} of $f$, i.e. $I(f)=\{g \in R \colon fg \in Rf \}$. The quotient ring 
\[
\mathcal{E}(f):=\frac{I(f)}{Rf}=\{g \in R \colon \deg(g)<s \mbox{ and }fg \in Rf\},
\]
is called the \textbf{eigenring} of $f$.
\end{definition}

Precisely, the next theorem determines the integer $m$ and the division algebra $\D$ in $M_n(\D) \cong R/RF(x^n)$ in relation to the polynomial $F(y)$ we started from.

\begin{theorem} [see \textnormal{\cite[Theorem 20]{owen2023eigenspaces}}] \label{th:isomorphismtheoremeigen} Let $F(y) \in \K[y]$ be a monic irreducible polynomial having degree $s \geq 1$ and let $f(x) \in R$ be an irreducible divisor of $F(x^n)$. Suppose $F(x^n)$ factorises into a product of $m$ irreducible elements of $R$. Then $m$ divides $n$ and, letting  \begin{equation} \label{eq:rationn/nm}
\ell:=\frac{n}{m},
\end{equation}
we have that $\mathcal{E}(f)$ is a central division algebra over $E_F$ having degree $\ell$. Moreover, the following $E_F$-algebra isomorphism hold:
\begin{equation} \label{eq:artin}
    \frac{R}{RF(x^n)} \cong M_m(\mathcal{E}(f)). 
\end{equation}
\end{theorem}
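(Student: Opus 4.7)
My plan is to derive the theorem from classical Artin--Wedderburn theory applied to $R_F$, combined with a module-theoretic identification of the eigenring $\mathcal{E}(f)$ as the opposite of the endomorphism ring of a simple $R_F$-module.

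The first step is to establish that $R_F$ is a central simple $E_F$-algebra of dimension $n^2$. Indeed, $RF(x^n)$ is a maximal two-sided ideal by \Cref{lm:irreduciblemaximal}, so $R_F$ is simple, and since $\dim_\K R_F = n^2 s$ with $[E_F:\K]=s$ we obtain $\dim_{E_F} R_F = n^2$. Artin--Wedderburn then yields an isomorphism $R_F \cong M_{\tilde m}(\D)$ for some positive integer $\tilde m$ and some central division $E_F$-algebra $\D$. Writing $\tilde \ell := \deg \D$, the dimension count $\tilde m^2 \tilde \ell^2 = n^2$ forces $\tilde m \tilde \ell = n$, and in particular $\tilde m \mid n$. (A priori $\tilde m$ is only the Wedderburn index; its equality with the hypothesised $m$ will be verified last.)

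The second step is to identify $\D$ with $\mathcal{E}(f)$. Consider $M := R/Rf$. Since $f$ is irreducible and $R$ is a principal ideal domain on both sides (\Cref{thm:skewprop}(i)), $M$ is a simple left $R$-module; moreover, since $f$ divides the central element $F(x^n)$ and right and left divisibility coincide for central elements (cf.\ \Cref{centralleftrightsymmetric}), we have $F(x^n) \in Rf$, so $M$ is also a simple left $R_F$-module. A direct computation shows that $g \mapsto \varphi_g$ with $\varphi_g(r + Rf) := rg + Rf$ is a ring anti-isomorphism from $I(f)/Rf$ onto $\End_R(M)$; equivalently, $\End_R(M)^{\mathrm{op}} \cong \mathcal{E}(f)$. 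Since the unique (up to isomorphism) simple left $M_{\tilde m}(\D)$-module has endomorphism ring $\D^{\mathrm{op}}$, I conclude $\mathcal{E}(f) \cong \D$; in particular $\mathcal{E}(f)$ is a central division $E_F$-algebra of degree $\tilde \ell$.

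Finally, I reconcile $\tilde m$ with the given number $m$ of irreducible factors by computing $\dim_{E_F} M$ in two ways. On one hand, $\dim_\LL M = \deg(f)$ yields $\dim_{E_F} M = n\deg(f)/s$; on the other, $M \cong \D^{\tilde m}$ as $M_{\tilde m}(\D)$-modules gives $\dim_{E_F} M = \tilde m\, \tilde \ell^2$. Equating and using $\tilde m \tilde \ell = n$ produces $\deg(f) = s\tilde \ell$. By \Cref{thm:skewprop}(iii) every irreducible factor in $F(x^n) = f_1 \cdots f_m$ shares this degree, so summing degrees gives $m\, s \tilde \ell = sn$, whence $m = n/\tilde \ell = \tilde m$. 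This establishes $m \mid n$, $\ell = \tilde \ell = n/m$, and the claimed isomorphism $R_F \cong M_m(\mathcal{E}(f))$. The main anticipated obstacle is conceptual rather than technical: the Wedderburn index $\tilde m$ is a priori just a numerical invariant of $R_F$, and identifying it with the combinatorial integer $m$ (the number of irreducible factors) requires precisely the dimension counting and the similarity statement of \Cref{thm:skewprop}(iii) used above.
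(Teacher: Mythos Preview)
The paper does not supply its own proof of this theorem; it is quoted verbatim from \cite[Theorem 20]{owen2023eigenspaces}, and the subsequent paragraph around \eqref{eq:canonicalisomorphism} merely writes down an explicit $E_F$-algebra map $R_F\to \End_{\mathcal{E}(f)}(R/Rf)$ \emph{after} the theorem has been accepted. There is therefore nothing in the paper to compare your argument against.

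Your argument is correct and is essentially the standard route: apply Artin--Wedderburn to the simple $E_F$-algebra $R_F$ (using the facts recorded in \S\ref{sec:skewframe} that $R_F$ is simple with centre $E_F$ and $\dim_{E_F}R_F=n^2$), identify the Wedderburn division ring with $\mathcal{E}(f)$ via $\mathcal{E}(f)\cong \End_R(R/Rf)^{\mathrm{op}}$, and recover $m=\tilde m$ by a dimension count together with \Cref{thm:skewprop}(iii). Each step is sound; in particular, your computation that right multiplication by $g\in I(f)$ gives a well-defined left $R$-endomorphism of $R/Rf$, and that this is an anti-isomorphism onto $\End_R(R/Rf)$, is the classical description of the eigenring. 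The only cosmetic point is that your identification $\End_{M_{\tilde m}(\D)}(\D^{\tilde m})\cong \D^{\mathrm{op}}$ could be stated once with a reference rather than asserted, but it is standard and the conclusion $\mathcal{E}(f)\cong \D$ is correct.
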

When we need to specify that the integers $m$ and $\ell$ arise from the polynomial $F$, we will use the notation $m_F$ and $\ell_F$ respectively.

In the case where $\LL=\fqn$, $\KK=\fq$ are finite, by the Wedderburn-Dickson Theorem we have that $\mathcal{E}(f)$ is a field and so 
\[\mathcal{E}(f) \cong E_F \cong \F_{q^s} \ \ \ \mbox{ and } \ \ \ 
\frac{R}{RF(x^n)} \cong M_n(\F_{q^s}),
\]
as $\F_{q^s}$-algebras. 
Note that in this case $m=n$ in \eqref{eq:artin} and $\ell_F=1$.

Moreover, the following main relations on the degree of a skew polynomial and its bound (or its minimal central multiple) hold. As observed in Remark \ref{centralleftrightsymmetric}, being a divisor of a central element is a left-right symmetric property. 

\begin{proposition} [see \textnormal{\cite[Theorem 4.3, Theorem 4.4]{giesbrecht1998factoring} and \cite[Theorem 2]{sheekey2020new}}] \label{th:basicskewpolynomial}
Let $R=\LL[x;\sigma]$, were $\sigma$ is not the identity automorphism. Then
\begin{enumerate}[(i)]
\item if $F(y)$ is an irreducible polynomial in $\K[y]$ such that $F(x^n)$ factorises into a product of $m_F$ irreducible elements of $R$, then every irreducible divisor of $F(x^n)$ in $R$ has degree equal to $\ell_F \deg(F(y))$, and every divisor of $F(x^n)$ in $R$ has degree equal to $h\ell_F \deg(F)$ for some $h \in \{1,\ldots,m\}$. If $\K$ is finite, then $\ell_F=1$;
\item if $f(x)$ is an irreducible element of $R$ with nonzero constant coefficient, then its bound $f^*(x)=F(x^n)$ is such that $F(y)$ is an irreducible element of $\K[y]$. Moreover, if $m_F$ and $\ell_F$ are as in (i), then $F(y)$ has degree $\deg(f)/\ell_F$. If $\K$ is finite, then $\ell_F=1$.
\end{enumerate}
\end{proposition}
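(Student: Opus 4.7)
The plan is to derive both parts from the Artin--Wedderburn description in \Cref{th:isomorphismtheoremeigen}, combined with the uniqueness of factorisation length in \Cref{thm:skewprop}(iii) and a $\K$-dimension count.

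For part (i), let $F \in \K[y]$ be irreducible of degree $s$ such that $F(x^n)$ factors into $m_F$ irreducibles, so $\ell_F = n/m_F$. The key observation is that $R/RF(x^n)$ is simple Artinian by \Cref{th:isomorphismtheoremeigen}, hence all simple left $R$-modules annihilated by $F(x^n)$ are isomorphic. If $f$ is an irreducible divisor of $F(x^n)$, then $R/Rf$ is such a simple module (irreducibility makes $Rf$ maximal, and $F(x^n) \in Rf$ implies it is annihilated), so $\dim_\K R/Rf$ is independent of the choice of $f$. Since $\dim_\K R/Rf = n\deg(f)$ and $\dim_\K R/RF(x^n) = n^2 s$ decomposes as $m_F$ copies of the simple module, equating forces $\deg(f) = s\ell_F$. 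For a general divisor $g$ of $F(x^n)$, factor $g = g_1 \cdots g_h$ into irreducibles and extend to a full factorisation of $F(x^n)$; by \Cref{thm:skewprop}(iii) the total length is $m_F$, whence $h \leq m_F$ and each $g_i$ is similar to some irreducible factor of $F(x^n)$, thus of degree $s\ell_F$, giving $\deg(g) = h s \ell_F$.

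For part (ii), let $f$ be irreducible with nonzero constant coefficient; as recorded earlier in the section, its bound has the form $f^*(x) = F(x^n)$ with $F \in \K[y]$ monic. To show $F$ is irreducible I would invoke a Schur-type argument: $V = R/Rf$ is a simple left $R$-module, so $\End_R(V)$ is a division ring. The centre $Z(R) \cong \K[y]$ acts on $V$ by left multiplication, and since its elements commute with everything in $R$ this yields a ring homomorphism $\K[y] \to \End_R(V)$. Its kernel is $Z(R) \cap \mathrm{Ann}_R(V) = Z(R) \cap RF(x^n)$, which one checks via Euclidean division in $\K[y]$ equals the principal ideal $(F(y))$. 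Hence $\K[y]/(F(y))$ embeds into a division ring, forcing $F$ to be irreducible. For the degree, $f$ is itself an irreducible divisor of $F(x^n)$, so part (i) applied to $F$ gives $\deg(f) = \deg(F)\cdot \ell_F$, i.e.\ $\deg(F) = \deg(f)/\ell_F$. Finally, when $\K$ is finite the central $E_F$-division algebra $\mathcal{E}(f)$ is finite, hence a field by Wedderburn's little theorem, so $\ell_F = 1$.

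The only substantive step is the Schur-lemma argument producing the irreducibility of $F$ in part (ii); the remainder is dimension counting and direct application of the results already stated.
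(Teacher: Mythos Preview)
The paper does not supply its own proof of this proposition; it is stated with citations to \cite{giesbrecht1998factoring} and \cite{sheekey2020new} and then used as a black box. Your argument is correct and provides a clean self-contained derivation from the Artin--Wedderburn description in \Cref{th:isomorphismtheoremeigen} together with \Cref{thm:skewprop}(iii). The Schur-lemma step for the irreducibility of $F$ in (ii) is sound: the central action of $\K[y]$ on the simple module $R/Rf$ factors through the division ring $\End_R(R/Rf)$, and the kernel is precisely $(F(y))$ by the defining property of the bound.

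One small refinement for part (i): when you pass from an irreducible right divisor $f$ to an arbitrary irreducible factor $g_i$ appearing in the middle of a factorisation of $F(x^n)$, the phrase ``similar to some irreducible factor'' is slightly imprecise, since $g_i$ need not itself be a right divisor of $F(x^n)$. The cleanest fix is to observe that a factorisation $F(x^n)=p_1\cdots p_{m_F}$ yields a composition series for $R/RF(x^n)$ as a left $R$-module whose successive quotients have $\K$-dimension $n\deg(p_i)$; since $R/RF(x^n)$ is simple Artinian, all composition factors are isomorphic to the unique simple module $R/Rf$, forcing $\deg(p_i)=\deg(f)=s\ell_F$ for every $i$. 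This handles your general divisor $g$ immediately.
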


\section{New results on skew polynomial rings}

\subsection{New explicit examples where $1<\ell_F<n$}
\label{sec:Javier}

In this section we prove that there exist skew polynomial rings $R$ containing central elements $F(x^n)$ such that $1<\ell_F<n$. To our knowledge, this is the first such explicit example. We also show that the values $\ell_F$ may vary for different elements $F$ in the same ring $R$.

Let $r \geq 3$ be an odd integer. Let consider the extension field $\F_{2^{r}} / \F_2$ and let $\tau : \F_{2^{r}} \to \F_{2^{r}}$ be the Frobenius automorphism, i.e. \(a^\tau = a^2\). We can then extend $\tau$ component-wise to the field $\F_{2^{r}}(t)$ of rational functions over $\F_{2^{r}}$.
 Clearly, the fixed field is $\Fix{\F_{2^{r}}(t)}{\tau}=\F_2(t)$. Now, consider the automorphism $\theta: \F_{2^{r}}(t) \rightarrow \F_{2^{r}}(t)$ defined by $t \mapsto \frac{1}{t}$. Define
\[
\sigma := \theta \circ \tau = \tau \circ \theta.
\]

Since the order of $\tau$ is $r$, the order of $\theta$ is \(2\), and since $\tau$ and $\theta$ commute we get that the order of $\sigma$ is $n:=2r$. Moreover, the restriction of \(\sigma\) to \(\F_2(t)\) coincides with the restriction of \(\theta\).

\begin{lemma} \label{lm:fixedfunctionfield}
    Let $s=t+\theta(t)=\frac{t^2+1}{t}$. Then
    \[
    \Fix{\F_2(t)}{\theta}=\F_2(s).
    \]
    Therefore,
    \[
    \Fix{\F_{2^{r}}(t)}{\sigma}=\F_2(s).
    \]
\end{lemma}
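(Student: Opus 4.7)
The plan is to prove both equalities by first establishing the trivial inclusions $\F_2(s) \subseteq \Fix{\F_2(t)}{\theta}$ and $\F_2(s) \subseteq \Fix{\F_{2^r}(t)}{\sigma}$, and then upgrading them to equalities by a degree count via Artin's theorem.

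For the first equality, I would begin by verifying directly that $\theta(s) = \theta(t) + \theta(1/t) = 1/t + t = s$, which gives $\F_2(s) \subseteq \Fix{\F_2(t)}{\theta}$. Next, I would observe that $t$ is a root of the polynomial $X^2 + sX + 1 \in \F_2(s)[X]$, because $t\cdot\theta(t) = 1$ and $t + \theta(t) = s$ (here working in characteristic $2$ kills the sign). Hence $[\F_2(t):\F_2(s)] \leq 2$, and equality follows because $\theta$ is a nontrivial $\F_2(s)$-automorphism of $\F_2(t)$, so $\F_2(s) \subsetneq \F_2(t)$. On the other hand, by Artin's theorem applied to the group $\langle \theta \rangle$ of order $2$, we get $[\F_2(t):\Fix{\F_2(t)}{\theta}] = 2$. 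Squeezing $\F_2(s) \subseteq \Fix{\F_2(t)}{\theta} \subsetneq \F_2(t)$ with these indices forces $\F_2(s) = \Fix{\F_2(t)}{\theta}$.

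For the second equality, note that $s \in \F_2(t)$ has coefficients in $\F_2$, so $\tau(s) = s$, and combined with $\theta(s) = s$ this yields $\sigma(s) = s$; hence $\F_2(s) \subseteq \Fix{\F_{2^r}(t)}{\sigma}$. Since $\tau$ and $\theta$ commute and have coprime orders $r$ (odd) and $2$, the composite $\sigma$ has order exactly $2r$. Thus Artin's theorem gives $[\F_{2^r}(t):\Fix{\F_{2^r}(t)}{\sigma}] = 2r$. To match this, I would compute $[\F_{2^r}(t):\F_2(s)]$ in the tower $\F_2(s) \subseteq \F_2(t) \subseteq \F_{2^r}(t)$: the first step has degree $2$ by the previous paragraph, and the second has degree $r$ because $\F_{2^r}$ and $\F_2(t)$ are linearly disjoint over $\F_2$ inside $\F_{2^r}(t)$ (the algebraic extension $\F_{2^r}/\F_2$ base-changes freely to the purely transcendental extension $\F_2(t)/\F_2$). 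Multiplying gives $[\F_{2^r}(t):\F_2(s)] = 2r$, and the equality of indices forces $\F_2(s) = \Fix{\F_{2^r}(t)}{\sigma}$.

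The only delicate points are justifying $[\F_{2^r}(t):\F_2(t)] = r$ and the order of $\sigma$; both are standard but need a brief explanation. No step looks like a genuine obstacle: once the minimal polynomial $X^2 + sX + 1$ of $t$ over $\F_2(s)$ is identified, the argument reduces to a clean Galois-theoretic degree count.
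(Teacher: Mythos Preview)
Your proof is correct and follows essentially the same strategy as the paper: verify $\theta(s)=s$, use Artin's theorem (the paper phrases it simply as ``$\theta$ has order $2$'') to get $[\F_2(t):\Fix{\F_2(t)}{\theta}]=2$, and match this against $[\F_2(t):\F_2(s)]=2$. The only cosmetic difference is that you compute the latter degree via the explicit minimal polynomial $X^2+sX+1$ of $t$ over $\F_2(s)$, whereas the paper cites a general theorem (van der Waerden) on degrees of rational function field extensions; you also spell out the degree count for the second equality, which the paper leaves implicit.
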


\begin{proof}
     The restriction of the automorphism $\theta$ is an $\F_2$-automorphism of $\F_2(t)$ having order $2$. Therefore, if $\F=\Fix{\F_2(t)}{\theta}$, we have that $[\F_2(t):\F]=2$. Now, observe that $s^\theta=s$ and so $\F_2(s) \subseteq \F$, and by \cite[Theorem pp. 197]{vanderWaerden:1949}, it follows that $[\F_2(t):\F_2(s)]=2$, so we get $\F_2(s)=\F$.
\end{proof}

We also refer to \cite[Example 2.5]{gomez2018hartmann}, \cite[Example 6.5]{gomez2019computing} and \cite{gutierrez2006building} for further examples of this type of automorphism, within the context of skew polynomials defined over function fields.

Lemma \ref{lm:fixedfunctionfield} proves that $\F_{2^{r}}(t)/\F_2(s)$ is a Galois cyclic extension with Galois group $\Gal(\F_{2^{r}}(t)/\F_2(s)) = \langle \sigma \rangle$. So, we work in the skew polynomial ring $\F_{2^{r}}(t)[x; \sigma]$. By the considerations just made, we get that $Z\left(\F_{2^{r}}(t)[x; \sigma]\right) = \F_2(s)[x^n]$.

\begin{proposition}
The polynomial $f(x)=x^2+\frac{t^2+1}{t^2+t+1}$ is irreducible in $\F_{2^{r}}(t)[x;\sigma]$.
\end{proposition}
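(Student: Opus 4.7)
The plan is to exploit the special form of the degree-$2$ skew polynomial. Since $f(x)=x^{2}+c$ with $c=\tfrac{t^{2}+1}{t^{2}+t+1}\in\F_{2^{r}}(t)$, a direct computation using $xv=v^{\sigma}x$ shows that in characteristic $2$,
\[
(x+v^{\sigma})(x+v)=x^{2}+v\cdot v^{\sigma}.
\]
So $f$ is reducible in $\F_{2^{r}}(t)[x;\sigma]$ if and only if the equation $v\cdot v^{\sigma}=c$ has a solution $v\in\F_{2^{r}}(t)^{*}$. Everything reduces to ruling this out.

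The first key step will be a descent argument: any hypothetical solution $v$ actually lies in the much smaller field $\F_{2}(t)$. Indeed, $c$ is $\sigma$-invariant (the coefficients are in $\F_{2}$, so $\tau$ acts trivially on $c$, and $c^{\theta}=c$ by direct inspection), so applying $\sigma$ to $v\cdot v^{\sigma}=c$ gives $v^{\sigma^{2}}=v$. But $\theta$ and $\tau$ commute and $\theta^{2}=1$, hence $\sigma^{2}=\tau^{2}$; since $r$ is odd, $\tau^{2}$ generates $\langle\tau\rangle=\Gal(\F_{2^{r}}/\F_{2})$, so $\mathrm{Fix}(\tau^{2})\cap\F_{2^{r}}(t)=\F_{2}(t)$. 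Thus $v\in\F_{2}(t)$, and on this subfield $\sigma$ acts simply as $\theta:t\mapsto 1/t$. The problem becomes: show there is no $v\in\F_{2}(t)^{*}$ with $v(t)\,v(1/t)=c$.

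The final step is a valuation obstruction at the place $\mathfrak{p}=(t^{2}+t+1)\subset\F_{2}[t]$. Over $\F_{2}$, $t^{2}+t+1$ is irreducible with roots $\omega,\omega^{2}$ (the primitive cube roots of unity), which satisfy $1/\omega=\omega^{2}$. Consequently $\theta$ permutes the roots of $t^{2}+t+1$ and therefore fixes the place $\mathfrak{p}$. A short calculation writing $v=(t^{2}+t+1)^{k}u/w$ with $u,w$ coprime to $t^{2}+t+1$, together with the identity $(1/t)^{2}+(1/t)+1=(t^{2}+t+1)/t^{2}$, gives $v_{\mathfrak{p}}(v(1/t))=v_{\mathfrak{p}}(v(t))=k$. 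Hence
\[
v_{\mathfrak{p}}\bigl(v(t)\,v(1/t)\bigr)=2k,
\]
which is even. On the other hand $c=(t+1)^{2}/(t^{2}+t+1)$ satisfies $v_{\mathfrak{p}}(c)=-1$. This parity mismatch gives the desired contradiction, so $f$ is irreducible.

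The main obstacle I expect is really the descent step; once one realizes that $\sigma^{2}=\tau^{2}$ and that $r$ odd forces $v\in\F_{2}(t)$, the rest is a clean parity argument at a single cleverly chosen place. The choice of the place $(t^{2}+t+1)$ is dictated by wanting a $\theta$-invariant odd-multiplicity factor in $c$; since $t+1$ appears with even multiplicity $2$ in the numerator, only the denominator can supply the parity obstruction, and among irreducible factors of $t^{2}+1$ and $t^{2}+t+1$ the only $\theta$-stable one with odd multiplicity is $t^{2}+t+1$.
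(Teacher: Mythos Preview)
Your proof is correct. Both your argument and the paper's begin identically: $f$ is reducible iff $v\,v^{\sigma}=c$ has a solution $v\in\F_{2^{r}}(t)^{*}$, where $c=\tfrac{t^{2}+1}{t^{2}+t+1}$. From there the routes diverge.

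The paper works directly over $\F_{2^{r}}(t)$: writing $v=a_{1}/a_{2}$ with $a_{1},a_{2}\in\F_{2^{r}}[t]$ coprime, it clears denominators and performs a hands-on divisibility argument, using that $t^{2}+t+1$ is irreducible in $\F_{2^{r}}[t]$ (here is where $r$ odd enters) to force $t^{2}+t+1\mid a_{2}$ and then $t^{2}+t+1\mid a_{1}$, contradicting coprimality. Your approach instead first \emph{descends}: from $c^{\sigma}=c$ you deduce $v^{\sigma^{2}}=v$, and since $\sigma^{2}=\tau^{2}$ generates $\langle\tau\rangle$ when $r$ is odd, this forces $v\in\F_{2}(t)$. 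The problem then becomes $v(t)\,v(1/t)=c$ in $\F_{2}(t)$, which you settle by a parity obstruction at the $\theta$-stable place $(t^{2}+t+1)$: the left side has even valuation, the right side has valuation $-1$.

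Both arguments ultimately hinge on the same prime $t^{2}+t+1$ and on $r$ being odd, but they use the oddness differently: the paper uses it to keep $t^{2}+t+1$ irreducible over $\F_{2^{r}}$, while you use it to force the Galois descent to $\F_{2}(t)$. Your version is somewhat more conceptual and separates the two ingredients (descent, then valuation) cleanly; the paper's version is more self-contained and avoids invoking the fixed-field computation. Either way, the role of $r$ odd is essential and correctly identified.
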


\begin{proof}
    Assume \(f(x)\) is reducible, i.e. $f(x)=(x+b(t))(x+a(t))$ in $\F_{2^{r}}(t)[x;\sigma]$, with $a(t),b(t) \in \F_{2^{r}}(t)$. It follows that $b(t)=a(t)^\sigma$ and 
    \begin{equation} \label{eq:fracfunctionfield}
a(t)^\sigma a(t)=\frac{t^2+1}{t^2+t+1}.
    \end{equation}
    Let $a(t)=\frac{a_1(t)}{a_2(t)}$, with $a_1(t),a_2(t) \in \F_{2^{r}}[t]$ having no common factor and with $\deg(a_1(t))=d_1$ and $\deg(a_2(t))=d_2$. Hence, by \eqref{eq:fracfunctionfield}, we get that 
    \[
\frac{a_1(t)^\sigma a_1(t)}{a_2(t)^\sigma a_2(t)}=\frac{t^2+1}{t^2+t+1}
    \]
    that in turns implies 
    \[
\frac{t^{d_2}t^{d_1} a_1(t)^\sigma a_1(t)}{t^{d_1}t^{d_2} a_2(t)^\sigma a_2(t)}=\frac{t^2+1}{t^2+t+1}
    \]
    As a consequence, we have that
    \begin{equation} \label{eq:fracfunctionfields2}(t^2+t+1)t^{d_2}t^{d_1} a_1(t)^\sigma a_1(t)= (t^2+1) t^{d_1}t^{d_2} a_2(t)^\sigma a_2(t)
    \end{equation}
Since $\deg(a_1(t))=d_1$ and $\deg(a_2(t))=d_2$, we also have that $t^{d_1} a_1(t)^\sigma$ and $t^{d_2} a_2(t)^\sigma$ are polynomials in $\F_{2^{r}}[t]$. Since $t^2+t+1$ is irreducible in $\F_{2^{r}}[t]$ (\(r\) is odd) and coprime with $t^2+1$ and $t^{d_1}$, we get that it divides $t^{d_2} a_2(t)^\sigma$ or $a_2(t)$. Note that $t^2+t+1=t^2 (t^2+t+1)^\sigma$ and so $t^2+t+1$ divides $a_2(t)$. Therefore, $a_2(t)=a_3(t)(t^2+t+1)$. Then substituting this expression of $a_2(t)$ in \eqref{eq:fracfunctionfields2}, we get
\[
(t^2+t+1)t^{d_2}t^{d_1} a_1(t)^\sigma a_1(t)= (t^2+1) t^{d_1}t^{d_2-2} a_3(t)^\sigma t^2 (t^2 + t + 1)^\sigma a_3(t)(t^2 + t + 1),
\]
hence
    \begin{equation} \label{eq:fracfunctionfields3}
t^{d_2}t^{d_1} a_1(t)^\sigma a_1(t)= (t^2+1) t^{d_1}t^{d_2-2} a_3(t)^\sigma a_3(t)(t^2 + t + 1)
    \end{equation}
The same argument we have followed to derive from \eqref{eq:fracfunctionfields2} that \(t^2 + t + 1\) divides \(a_2(t)\), allows us to derive from \eqref{eq:fracfunctionfields3} that \(t^2 + t + 1\) divides \(a_1(t)\). Hence \(a_1(t)\) and \(a_2(t)\) have a common factor, a contradiction. Therefore such \(a(t)\) cannot exist, and so \(f(x)\) is irreducible. 
\end{proof}

Now, let 
\[
F(y)=y+\left(\frac{t^2+1}{t^2+t+1}\right)^{r}.
\]
It can be easily checked that 
\[\left(\frac{t^2+1}{t^2+t+1}\right)^\sigma = \frac{t^2+1}{t^2+t+1},\] 
which implies that $F(x^n) \in Z(\F_{2^{r}}(t)[x;\sigma])$. Moreover, $f(x) \mid_r F(x^n)$; indeed
\begin{footnotesize}
\[
F(x^n)=\left(x^{n-2}+\frac{t^2+1}{t^2+t+1}x^{n-4}+\left(\frac{t^2+1}{t^2+t+1}\right)^2x^{n-6}+\cdots+\left(\frac{t^2+1}{t^2+t+1}\right)^{r-2}x^2+\left(\frac{t^2+1}{t^2+t+1}\right)^{r-1}\right)f(x)
,\]
\end{footnotesize}
which in turn implies that $F(x^n)$ is the bound of $f(x)$ in $\F_{2^{r}}(t)[x;\sigma]$.

In this case, $\deg(f)=2$ and $\deg(F(y))=1$, and $F(x^n)$ factorises into a product of $m=r$ irreducibles of $\F_{2^{r}}[x;\sigma]$,  which implies that 
\[
\frac{R}{RF(x^n)} \simeq M_{r}(\mathcal{E}(f)),
\]
where $\mathcal{E}(f)$ is a central division algebra over $E_F\simeq \frac{\F_2(s)[y]}{(F(y))} \simeq \F_2(s)$ having degree $\ell_F=n/m=2$. 

In the same setting, now let \(g(x) = x^2 + \frac{1}{t} \in \F_{2^r}(t)[x;\sigma]\) and \(G(y) = y^2 + \frac{t^{2r} + 1}{t^r} y + 1\). Since 
\[
\left( \frac{t^{2r} + 1}{t^r} \right)^\sigma = \frac{\frac{1}{t^{2r}} + 1}{\frac{1}{t^r}} = \frac{t^{2r} + 1}{\frac{t^{2r}}{t^r}} = \frac{t^{2r} + 1}{t^r}, 
\]
it follows that \(G(x^{2r}) \in Z(\F_{2^r}(t)[x;\sigma]) = \F_2(s)[x^{2r}]\). We are going to see that \(G(x^{2r})\) is the bound of \(g(x)\), and \(G(y)\) is irreducible as a polynomial in \(\F_2(s)[y]\).

\begin{lemma}\label{gmidG}
The polynomial \(g(x)\) right and left divides \(G(x^{2r})\).
\end{lemma}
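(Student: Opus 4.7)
The plan is to reduce the assertion to a short modular computation in the left quotient $R/Rg$ and then obtain the left-divisibility for free from the centrality of $G(x^{2r})$, using the symmetry noted in Remark~\ref{centralleftrightsymmetric}. The key structural ingredient is the identity $\sigma^2(t) = t$: indeed $\sigma(t) = \theta(\tau(t)) = \theta(t) = 1/t$, so $\sigma^2(t) = \sigma(1/t) = t$. Equivalently, $\sigma^2 = \tau^2$ fixes $\F_2(t)$ pointwise, which translates in $R$ to the commutation rule $x^2 \cdot a = a \cdot x^2$ for every $a \in \F_2(t)$.

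The main calculation is the congruence
\[
x^{2k} \equiv t^{-k} \pmod{Rg} \qquad \text{for all } k \geq 0,
\]
which I would establish by induction on $k$. The base case $k=1$ follows directly from $g(x) = x^2 + t^{-1}$, which gives $x^2 \equiv t^{-1} \pmod{Rg}$ (characteristic~$2$). For the inductive step, writing $x^{2k} = t^{-k} + p_k(x)\,g(x)$ and multiplying on the left by $x^2$, the term $x^2 p_k(x) g(x)$ lies in $Rg$, so
\[
x^{2(k+1)} \equiv x^2 \cdot t^{-k} = t^{-k} \cdot x^2 \equiv t^{-k} \cdot t^{-1} = t^{-(k+1)} \pmod{Rg},
\]
where the middle equality uses the commutation of $x^2$ with $t^{-k}$ and the final step reduces $x^2$ once more. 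Specialising to $k=r$ and $k=2r$ and substituting into the definition of $G(x^{2r})$ gives, modulo $Rg$,
\[
G(x^{2r}) \equiv t^{-2r} + \frac{t^{2r}+1}{t^r}\cdot t^{-r} + 1 = t^{-2r} + (t^{2r}+1)\,t^{-2r} + 1 = 0
\]
in characteristic $2$. Hence $G(x^{2r}) = h(x)\,g(x)$ for some $h(x) \in R$, i.e., $g(x)$ right-divides $G(x^{2r})$.

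For the left-divisibility, centrality does the rest: since $G(x^{2r}) \in Z(R)$ commutes with $g(x)$, the identity $g \cdot (hg) = (hg)\cdot g$ simplifies to $ghg = hg^2$, and cancelling $g$ on the right in the domain $R$ yields $gh = hg$. Therefore $G(x^{2r}) = g(x)\,h(x)$ as well. There is no real technical obstacle in the argument; the only conceptual point is the identity $\sigma^2(t) = t$, without which the powers of $x^2$ would not reduce cleanly to scalars in $\LL$ and the modular computation would fail to collapse as neatly.
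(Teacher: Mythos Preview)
Your proof is correct and essentially follows the paper's approach. The paper makes the same key observation---that $\sigma^2$ fixes $\F_2(t)$---but packages it by noting that $\F_2(t)[x^2]$ is a commutative subring containing both $g(x)$ and $G(x^{2r})$, then simply checks that $1/t$ is a root of $G((x^2)^r)$ there; divisibility in that commutative subring immediately yields both left and right divisibility in $R$. Your inductive reduction $x^{2k}\equiv t^{-k}\pmod{Rg}$ and your centrality/cancellation argument for left divisibility are just explicit unpackings of the same idea.
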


\begin{proof}
The restriction of \(\sigma\) to \(\F_2(t)\) is \(\theta\), so \(\F_2(t)[x;\theta] \subseteq \F_{2^r}(t)[x;\sigma]\). Moreover \(\theta^2\) is the identity, so \(\F_2(t)[x^2]\) is a commutative subring of \(\F_{2^r}(t)[x;\sigma]\). Observe that \(g(x) \in \F_2(t)[x^2]\) and \(Z(\F_{2^r}(t)[x;\sigma]) = \F_2(s)[x^{2r}] \subseteq \F_2(t)[x^2]\), hence \(G(x^{2r}) = (x^2)^{2r} + \frac{t^{2r} + 1}{t^r} (x^2)^r + 1 \in \F_2(t)[x^2]\). Since
\[
\left(\frac{1}{t}\right)^{2r} + \frac{t^{2r} + 1}{t^r} \left(\frac{1}{t}\right)^r + 1 = \frac{1 + t^{2r} + 1 + t^{2r}}{t^{2r}} = 0,
\]
it follows that \(g(x)\) divides \(G(x^{2r})\) in \(\F_2(t)[x^2]\) which implies that \(g(x)\) right and left divides \(G(x^{2r})\) in \(\F_{2^r}(t)[x;\sigma]\). 
\end{proof}

\begin{lemma}\label{polyins}
Let \(\ell\) be an odd non negative integer. Let \(r(t) = \frac{\sum_{i=0}^\ell a_i t^{2i}}{t^\ell} \in \F_2(t)\) such that \(a_i = a_{\ell-i}\) for each \(0 \leq i \leq \ell\). Then \(r(t)\) is a polynomial in \(s = \frac{t^2 + 1}{t}\) with degree \(\ell\) if \(a_0 = a_\ell \neq 0\), or degree less than \(\ell\) if \(a_0 = a_\ell = 0\).
\end{lemma}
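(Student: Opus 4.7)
The plan is to prove this by induction on the odd integer \(\ell\), with the key computation being the binomial expansion of \(s^\ell\) in characteristic \(2\). Writing \(s = t + t^{-1}\), we have
\[
s^\ell = (t + t^{-1})^\ell = \sum_{k=0}^{\ell}\binom{\ell}{k}\,t^{\ell - 2k},
\]
so \(s^\ell\) is itself a Laurent polynomial in \(t\) which, like \(r(t)\), is symmetric under \(t \mapsto t^{-1}\) and has leading and trailing terms \(t^\ell\) and \(t^{-\ell}\) (with coefficient \(1\) in both, since \(\binom{\ell}{0} = \binom{\ell}{\ell} = 1\)). The symmetric expression for \(r(t)\) is \(r(t) = \sum_{i=0}^{\ell} a_i\, t^{2i - \ell}\).

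The base case \(\ell = 1\) is immediate: \(r(t) = a_0 t^{-1} + a_1 t = a_1(t + t^{-1}) = a_1 s\), which has degree \(1\) if \(a_1 \neq 0\) and degree \(0 < 1\) (in fact, is the zero polynomial) otherwise. For the inductive step with odd \(\ell \geq 3\), I would subtract the correct multiple of \(s^\ell\) to kill the extremal terms. Specifically, consider
\[
r(t) - a_\ell\, s^\ell = \sum_{i=0}^{\ell}\bigl(a_i - a_\ell \tbinom{\ell}{\ell - i}\bigr)\, t^{2i - \ell}.
\]
The \(i = 0\) and \(i = \ell\) terms vanish because \(a_0 = a_\ell\) and \(\binom{\ell}{0} = \binom{\ell}{\ell} = 1\). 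Setting \(b_i := a_i - a_\ell \binom{\ell}{\ell - i}\) for \(1 \leq i \leq \ell - 1\) and reindexing \(j = i - 1\), we get
\[
r(t) - a_\ell\, s^\ell = \frac{\sum_{j=0}^{\ell - 2} b_{j+1}\, t^{2j}}{t^{\ell - 2}}.
\]
Now \(\ell - 2\) is again odd, and the palindromic property \(b_{j+1} = b_{\ell - 1 - j}\) is a one-line check from \(a_i = a_{\ell - i}\) together with \(\binom{\ell}{\ell - i} = \binom{\ell}{i}\). Thus the inductive hypothesis applies, so \(r(t) - a_\ell\, s^\ell\) is a polynomial in \(s\) of degree at most \(\ell - 2\), and consequently \(r(t)\) itself is a polynomial in \(s\) of degree at most \(\ell\).

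The degree claims then drop out: when \(a_0 = a_\ell \neq 0\), the expression \(r(t) = a_\ell\, s^\ell + (\text{terms of degree} \leq \ell - 2 \text{ in } s)\) shows that the degree in \(s\) is exactly \(\ell\); when \(a_0 = a_\ell = 0\), the leading term drops out and the remainder has degree at most \(\ell - 2 < \ell\), as required. The main subtlety—rather than an obstacle—is keeping track of the palindromic symmetry through the reduction, but this is immediate from the symmetry of binomial coefficients. As a sanity check, one could alternatively invoke \Cref{lm:fixedfunctionfield} to see \emph{a priori} that \(r(t) \in \F_2(s)\) (since \(r(t)^\theta = r(t^{-1}) = r(t)\) by the palindromic property), but the inductive argument above has the advantage of furnishing the explicit expression and pinning down the degree.
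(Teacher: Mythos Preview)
Your proof is correct and follows essentially the same approach as the paper: induction on the odd integer \(\ell\), subtracting (equivalently, in characteristic \(2\), adding) the appropriate multiple of \(s^\ell\) via its binomial expansion to kill the extremal terms and reduce to the case \(\ell - 2\). The only cosmetic difference is that the paper splits into the two cases \(a_0 = a_\ell = 0\) and \(a_0 = a_\ell = 1\) (the only possibilities in \(\F_2\)) and handles them separately, whereas you treat both at once by subtracting \(a_\ell\, s^\ell\).
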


\begin{proof}
We proceed by induction on \(\ell\). If \(\ell = 1\), then \(r(t) = \frac{t^2 + 1}{t} = s\) or \(r(t) = 0\), as stated. Assume \(\ell > 1\) and let \(r(t) = \frac{\sum_{i=0}^\ell a_i t^{2i}}{t^\ell}\) with \(a_i = a_{\ell-i}\) for each \(0 \leq i \leq \ell\). If \(a_0 = a_\ell = 0\),
\[
r(t) = \frac{\sum_{i=0}^\ell a_i t^{2i}}{t^\ell} = \frac{\sum_{i=1}^{\ell-1} a_i t^{2i}}{t^\ell} = \frac{\sum_{i=1}^{\ell-1} a_i t^{2i-2}}{t^{\ell-2}} = \frac{\sum_{j=0}^{\ell-2} b_{j} t^{2j}}{t^{\ell-2}}
\]
with \(b_j = a_{j+1}\). Since \(b_{\ell-2-j} = a_{\ell-2-j+1} = a_{j+1} = b_j\), by induction \(r(t)\) is a polynomial is \(s\) of degree less than or equal to \(\ell-2 < \ell\). If \(a_0 = a_\ell = 1\), then
\[
r(t) + s^\ell = \frac{\sum_{i=0}^\ell a_i t^{2i}}{t^\ell} + \frac{\sum_{i=0}^\ell \binom{\ell}{i} t^{2i}}{t^\ell} = \frac{\sum_{i=0}^\ell (a_i + \binom{\ell}{i}) t^{2i}}{t^\ell}.
\]
Let \(a'_i = a_i + \binom{\ell}{i}\). It follows that \(a'_i = a'_{\ell-i}\) and \(a'_0 = a_0 + \binom{\ell}{0} = 1 + 1 = 0\). As we argued before, \(r(t) + s^\ell\) is a polynomial in \(s\) of degree bounded by \(\ell-2\), hence \(r(t)\) is a polynomial of degree \(\ell\).
\end{proof}

\begin{lemma}\label{Girreducible}
The polynomial \(G(y) = y^2 + \frac{t^{2r} + 1}{t^r} y + 1 \in \F_2(s)[y]\) is irreducible. 
\end{lemma}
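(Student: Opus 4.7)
The plan is to exhibit the two roots of $G$ explicitly in the intermediate field $\F_2(t)$, and then use Galois theory for the quadratic extension $\F_2(t)/\F_2(s)$ to conclude that neither root actually lies in $\F_2(s)$.

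First I would verify that $t^r$ is a root of $G(y)$ in $\F_2(t)$: substituting yields
\[
G(t^r) = t^{2r} + \frac{t^{2r}+1}{t^r}\cdot t^r + 1 = t^{2r} + (t^{2r}+1) + 1 = 0
\]
in characteristic $2$. Since $G$ is monic of degree $2$ with constant term $1$, the other root must be $t^{-r}$, giving the factorization
\[
G(y) = (y+t^r)(y+t^{-r}) \in \F_2(t)[y],
\]
a formula one can also read off directly from the identities $t^r \cdot t^{-r} = 1$ and $t^r + t^{-r} = (t^{2r}+1)/t^r$.

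Because $G$ has degree $2$, irreducibility over $\F_2(s)$ is equivalent to the absence of any root in $\F_2(s)$; and any root of $G$ in an extension of $\F_2(s)$ must coincide with $t^r$ or $t^{-r}$. By \Cref{lm:fixedfunctionfield}, $\F_2(s) = \Fix{\F_2(t)}{\theta}$, with $\theta$ of order $2$, so membership of $t^r$ in $\F_2(s)$ is equivalent to $\theta(t^r) = t^r$, i.e. $t^{-r} = t^r$, i.e. $t^{2r} = 1$. This contradicts the transcendence of $t$ over $\F_2$, so neither $t^r$ nor $t^{-r}$ lies in $\F_2(s)$, and therefore $G(y)$ is irreducible in $\F_2(s)[y]$.

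I do not anticipate any real obstacle here: the proof reduces to a one-line substitution showing that $t^r$ is a root and a fixed-point check against $\theta$. The only place where care is needed is the use of characteristic $2$ in the root computation and the observation that $t^{2r} \ne 1$ because $t$ is a transcendental indeterminate. (Note that \Cref{polyins} is not needed for the irreducibility argument itself; it serves only to confirm that the coefficients of $G(y)$ indeed lie in $\F_2(s)$, which in this setting already follows from the $\sigma$-invariance observed before \Cref{gmidG}.)
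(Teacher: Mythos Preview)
Your proof is correct and takes a genuinely different route from the paper's. The paper invokes \Cref{polyins} to rewrite the middle coefficient as a polynomial $p(s)\in\F_2[s]$ of degree $r$, and then applies the rational root test in $\F_2[s]$: any root of $y^2+p(s)y+1$ in $\F_2(s)$ would have to be a unit of $\F_2[s]$, hence equal to $1$, and $G(1)=p(s)\neq 0$. Your argument instead exhibits the roots $t^r$ and $t^{-r}$ explicitly in $\F_2(t)$ and checks that $\theta$ swaps them, so neither is $\theta$-fixed and neither lies in $\F_2(s)=\Fix{\F_2(t)}{\theta}$. Your approach is slightly more direct and, as you observe, makes \Cref{polyins} unnecessary for this lemma; the paper's approach has the minor advantage of never leaving the ground field $\F_2(s)$ and of reusing a lemma that is already being developed for other purposes.
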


\begin{proof}
As a consequence of Lemma \ref{polyins}, \(G(y) = y^2 + p(s) y + 1\) where \(p(s)\) is a polynomial in \(s\) of degree \(r\). The only possible root of \(G(y)\) is, therefore, \(1\), which is clearly not a root. Hence \(G(y)\) is irreducible. 
\end{proof}

\begin{proposition}
Let \(g(x) = x^2 + \frac{1}{t} \in \F_{2^r}(t)[x;\sigma]\) and \(G(y) = y^2 + \frac{t^{2r} + 1}{t^r} y + 1 \in \F_{2}(s)[y]\). Then \(g\) is irreducible and its bound is \(G(x^{2r})\).
\end{proposition}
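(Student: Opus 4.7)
The plan is to split the statement into two claims: irreducibility of $g(x)$ in $\F_{2^r}(t)[x;\sigma]$, and the identification of $G(x^{2r})$ as its bound. The second claim follows relatively easily by combining the two preceding lemmas with \Cref{th:basicskewpolynomial}(ii); the main work lies in proving irreducibility.

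For irreducibility I would argue by contradiction. If $g(x)$ were reducible, expanding $g(x) = (x + b(t))(x + a(t))$ and matching coefficients forces $b(t) = a(t)^\sigma$ and $a(t)^\sigma a(t) = 1/t$ for some $a(t) \in \F_{2^r}(t)$. The cleanest way to rule this out is via the two places $v_t$ and $v_\infty$ of $\F_{2^r}(t)$: because $\tau$ acts only on coefficients (hence fixes both places), whereas $\theta: t \mapsto 1/t$ interchanges them, the composition $\sigma = \theta\tau$ swaps $v_t$ and $v_\infty$, so $v_t(h^\sigma) = v_\infty(h)$ for every $h \in \F_{2^r}(t)$. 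Applying $v_t$ and $v_\infty$ to the equation $a^\sigma a = 1/t$ then yields $v_\infty(a) + v_t(a) = -1$ and $v_t(a) + v_\infty(a) = 1$ respectively, a contradiction. An alternative, in the same style as the proof given above for $f(x)$, is to write $a(t) = a_1(t)/a_2(t)$ in lowest terms, clear denominators in $a^\sigma a = 1/t$, and analyse the power of $t$ on each side together with degrees; this also produces two incompatible linear relations between $\deg(a_i)$ and the orders of vanishing of $a_i$ at $0$.

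For the bound, \Cref{gmidG} shows that $g(x)$ right divides $G(x^{2r})$, so $G(x^{2r}) \in R g$. Because $g$ is irreducible with non-zero constant coefficient, \Cref{th:basicskewpolynomial}(ii) ensures that its bound has the form $F(x^{2r})$ with $F(y) \in \F_2(s)[y]$ monic and irreducible. Since $G(x^{2r})$ is central, the two-sided ideal $R G(x^{2r})$ is contained in $R g$, hence in the largest two-sided ideal $R g^*$ sitting inside $R g$; therefore $g^* = F(x^{2r})$ divides $G(x^{2r})$ in $R$. Both being central, this divisibility is equivalent to $F(y) \mid G(y)$ in $Z(R) \cong \F_2(s)[y]$. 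By \Cref{Girreducible}, $G(y)$ is irreducible, and both $F$ and $G$ are monic, so $F = G$ and the bound of $g$ is exactly $G(x^{2r})$.

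The main obstacle is the irreducibility step, which amounts to showing that $1/t$ is not in the image of the map $a \mapsto a^\sigma a$ on $\F_{2^r}(t)^\times$. The valuation argument collapses this to a two-line check because $\sigma$ happens to swap the two places at which $1/t$ has non-trivial valuation, forcing an impossible symmetry on the sum $v_t(a) + v_\infty(a)$.
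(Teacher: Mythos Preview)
Your proof is correct, but it inverts the paper's logic. The paper first establishes that $G(x^{2r})$ is the bound using only Lemmas~\ref{gmidG} and~\ref{Girreducible} (this does not require $g$ to be irreducible: any $g$ with nonzero constant term has bound of the form $F(x^{2r})$ with $F\in\F_2(s)[y]$ monic, and $F\mid G$ with $G$ irreducible forces $F=G$), and only then deduces irreducibility by invoking \cite[Proposition~4.4]{gomez2019computing}, a general criterion linking the degree of the bound to irreducibility in rings of finite rank over their centre. You instead prove irreducibility first, by the direct valuation argument showing $1/t\notin\{a^\sigma a:a\in\F_{2^r}(t)^\times\}$, and then use irreducibility together with the two lemmas to pin down the bound. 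Your route is more self-contained, avoiding the external reference, and the observation that $\sigma$ swaps $v_t$ and $v_\infty$ (forcing $v_t(a)+v_\infty(a)$ to equal both $-1$ and $1$) is a clean replacement for the somewhat heavier structural citation. The paper's route is shorter on the page but leans on a result not proved here. Either order is valid; note that your bound argument could also be run without first knowing $g$ is irreducible, exactly as the paper does.
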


\begin{proof}
By Lemmas \ref{gmidG} and \ref{Girreducible}, \(G(x^{2r})\) is the bound of \(g(x)\). Since \(Z(\F_{2^r}(t)[x;\sigma]) = \F_2(s)[x^{2r}]\) and \([\F_{2^r}(t):\F_2(s)] = 2r\). As observed in \cite[Remark 2.10]{gomez2019computing}, \(\F_{2^r}(t)[x;\sigma]\) has finite rank over its centre, and it is easy to check that this rank is \((2r)^2\). So \(g(x)\) is irreducible by \cite[Proposition 4.4]{gomez2019computing}.
\end{proof}

Hence we have $\ell_G=1\ne \ell_F$, showing that the values $\ell$ do truly depend on the polynomial, rather than on the ring $R$. In particular, the elements $f(x)$ and $g(x)$ have the same degree, but their bounds have different degrees.

\subsection{Calculation and properties of the bound}
\label{sec:mzlm}

Some constructions in the literature, and some of the new generalisations that we will prove later in this paper, rely on relating properties of an irreducible element $f$  to properties of its bound $F(x^n)$. To that end, we need to recall the relations which are known only for the case $\ell_F=1$, and extending them to deal with the case $\ell_F>1$.

Consider $R=\LL[x;\sigma]$ and an element $f$ having degree $h$. Define the semilinear map $\phi_f:R/Rf \rightarrow R/Rf$ on the $\LL$-vector space $R/RF$ as $\phi_f(v)=xv \ \mod_r f,$ for each $v \in R/Rf$.
The map $\phi_f$ is $\LL$-semilinear with companion automorphism $\sigma$, as 
\[
\phi_f(av) = x(av) \ \mod_r f = a^{\sigma}(xv \ \mod_r f )= a^{\sigma}\phi_f(v)
\] 
for all $a \in \LL$. By choosing the $\LL$-basis $\{1,x,\ldots,x^{h-1}\}$ for $R/Rf$, and writing the components of the elements of $R/Rf$ as column vectors, we see that $\phi_f = C_f\circ \sigma$, where $\sigma$ acts entry-wise on vectors, and $C_f$ denotes the companion matrix of the polynomial $f$. More precisely, if $v=\sum_{i=0}^{h-1}v_ix^i \in R/Rf$,
\[
    \phi_f(v) = C_f\cdot v^{\sigma}=\begin{pmatrix} 0&0&\cdots&0&-f_0\\1&0&\cdots&0&-f_1\\\vdots&\ddots&\cdots&\cdots&\vdots\\0&0&\cdots&1&-f_{h-1}\end{pmatrix} (v_0^{\sigma},v_1^\sigma,\cdots,v_{h-1}^{\sigma})^{\top}.
\]
Now the map $\phi_f^n$ defines an  $\LL$-linear map on $R/Rf$. Indeed, it corresponds to $v\mapsto x^n v \ \mod_r f$. Denoting its matrix with respect to the same basis by $A_f$, we have that
\[
A_f = C_f C_f^{\sigma}\cdots C_f^{\sigma^{n-1}},
\]
where again $\sigma$ acts entry-wise on matrices.


\begin{theorem} [see \textnormal{\cite[Theorem 3]{sheekey2020new}}] \label{th:minimpolyminimalcentral} Let $f(x)$ be a monic element of $R$ with non zero constant coefficient. Then the bound of $f$ is equal to $G(x^n)$, where $G(y) \in \K[y]$ is the minimal polynomial of $\phi_f^n$ seen as $\K$-linear map.
\end{theorem}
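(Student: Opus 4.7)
The plan is to identify the minimal $\K$-polynomial $G$ of $\phi_f^n$ with the unique monic $F\in\K[y]$ such that $F(x^n)$ is the bound of $f$, by proving mutual divisibility $G\mid F$ and $F\mid G$ in $\K[y]$.

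The first step is to translate the action of $\phi_f^n$ into the language of $R$. Since $x^n \in Z(R)$, the map $\phi_f^n\colon R/Rf \to R/Rf$ is not only $\K$-linear but in fact $\LL$-linear, and for any $H(y)\in\K[y]$ the endomorphism $H(\phi_f^n)$ is precisely left multiplication by $H(x^n)$ on $R/Rf$. This yields the key equivalence
\[
H(\phi_f^n)=0 \iff H(x^n)\cdot R \subseteq Rf \iff R\, H(x^n)\subseteq Rf,
\]
where the second equivalence uses the centrality of $H(x^n)$ so that $R\,H(x^n)=H(x^n)\,R$ is automatically a two-sided ideal.

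For the direction $G\mid F$, I would take $F(x^n)$ to be the bound of $f$: by definition $RF(x^n)\subseteq Rf$, so the equivalence above gives $F(\phi_f^n)=0$. Hence the minimal polynomial $G\in\K[y]$ of $\phi_f^n$ divides $F$. For the reverse direction $F\mid G$, I would apply the same equivalence to $G$: from $G(\phi_f^n)=0$ I obtain $RG(x^n)\subseteq Rf$, and since $G(x^n)\in Z(R)$ this is a two-sided ideal contained in $Rf$. By the defining maximality property of the bound, $RG(x^n)\subseteq RF(x^n)$, which — both generators lying in the central polynomial subring $\K[x^n]\cong\K[y]$ — translates into $F\mid G$ in $\K[y]$. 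Both $F$ and $G$ being monic, mutual divisibility forces $F=G$, which is the claim.

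I do not anticipate a real obstacle: the content is essentially a bookkeeping argument, and the subtle point is simply to use the centrality of $x^n$ twice — once to make $\phi_f^n$ into an $\LL$-linear (and in particular $\K$-linear) map whose powers correspond to central multiplications, and once to guarantee that the principal left ideal $RG(x^n)$ associated to an annihilating polynomial is automatically two-sided, so that the universal property of the bound can be invoked.
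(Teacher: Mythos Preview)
The paper does not supply its own proof of this statement; it is quoted verbatim as \cite[Theorem 3]{sheekey2020new}. Your argument is correct and is essentially the natural one: the identification of $H(\phi_f^n)=0$ with the containment $RH(x^n)\subseteq Rf$ (using centrality of $x^n$), together with the defining maximality of the bound among two-sided ideals in $Rf$, immediately yields $G=F$.
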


Next we investigate the relation between the characteristic polynomial of $A_f$ and the bound of $f$. First, we recall that if $\LL=\F_{q^n}$ and $\K=\F_q$, in \cite[Theorem 4]{mcguire2019characterization} it is proven that the characteristic polynomial of $A_f$ has its coefficients in $\F_q$. Replacing in the proof $\F_q$ and $\F_{q^n}$ by an arbitrary field $\K$ and an its cyclic Galois extension $\LL$ of degree $n$, respectively, we get the following.

\begin{lemma}  \label{th:mcsheekeycharacteristicpolynomial}
The characteristic polynomial of $A_f$ has coefficients in $\K$, i.e. $\det(A_f - yI) \in \K[y]$.
\end{lemma}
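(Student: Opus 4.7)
The plan is to show directly that $\det(yI - A_f)^{\sigma} = \det(yI - A_f)$, where $\sigma$ acts coefficient-wise on $\LL[y]$ (fixing the indeterminate $y$). Since $\LL/\K$ is a cyclic Galois extension with $\langle \sigma\rangle = \Gal(\LL/\K)$, a polynomial in $\LL[y]$ is $\sigma$-invariant if and only if its coefficients lie in $\K$, which is equivalent to the claim of the lemma (up to the overall sign $(-1)^h \in \K$).

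The key identity is $A_f C_f = C_f A_f^{\sigma}$. This follows immediately from the factorisation $A_f = C_f C_f^{\sigma} \cdots C_f^{\sigma^{n-1}}$: applying $\sigma$ entry-wise and using that $\sigma^n$ is the identity on $\LL$, one gets
\[
A_f^{\sigma} = C_f^{\sigma} C_f^{\sigma^2} \cdots C_f^{\sigma^{n-1}} C_f^{\sigma^n} = C_f^{\sigma}  \cdots C_f^{\sigma^{n-1}} C_f,
\]
so multiplying on the left by $C_f$ gives $C_f A_f^{\sigma} = C_f C_f^{\sigma} \cdots C_f^{\sigma^{n-1}} C_f = A_f C_f$. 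Rewriting this as $(yI - A_f)C_f = C_f(yI - A_f^{\sigma})$ in $M_h(\LL[y])$ (valid since $y$ is central) and taking determinants yields
\[
\det(yI - A_f)\cdot \det(C_f) \;=\; \det(C_f)\cdot \det(yI - A_f)^{\sigma},
\]
where I used that $\sigma$ commutes with $\det$ and fixes $y$. When $f$ has nonzero constant coefficient $f_0$, one has $\det(C_f) = \pm f_0 \neq 0$, so cancellation in the integral domain $\LL[y]$ gives the conclusion.

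The main obstacle is the case $f_0 = 0$, when $\det(C_f)$ vanishes and the cancellation above fails. I would resolve this by a specialisation trick: extend $\sigma$ to the rational function field $\LL(t)$ by declaring $t^{\sigma} = t$, so that $\LL(t)/\K(t)$ is again cyclic Galois of degree $n$ with generator $\sigma$. Consider the deformation $\tilde f(x) := f(x) + t \in \LL(t)[x;\sigma]$, whose constant coefficient $f_0 + t$ is a nonzero element of $\LL[t]$. Applying the argument above to $\tilde f$ yields $\det(yI - A_{\tilde f}) \in \K(t)[y]$. Since the entries of $A_{\tilde f}$ are polynomials in $t$, the coefficients of $\det(yI - A_{\tilde f})$ actually lie in $\LL[t] \cap \K(t)[y] = \K[t][y]$. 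Specialising $t \mapsto 0$ recovers $A_f$ and hence $\det(yI - A_f) \in \K[y]$, completing the proof in full generality.
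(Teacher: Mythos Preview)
Your proof is correct and follows the same approach the paper relies on: the paper does not give an explicit argument but defers to the proof of McGuire--Sheekey over finite fields, noting that it carries over verbatim to general cyclic extensions, and that argument is precisely the conjugation identity $C_f A_f^{\sigma} = A_f C_f$, which forces $A_f$ and $A_f^{\sigma}$ to have the same characteristic polynomial whenever $C_f$ is invertible. Your deformation trick for the degenerate case $f_0 = 0$ is a clean and correct addition; the paper does not address this case separately, and in all of its subsequent uses of the lemma $f$ is assumed to have nonzero constant coefficient anyway.
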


For a monic and irreducible monic polynomial $f \in R$ of degree $h \geq 2$ with $f^*(x)=F(x^n)$, we have that $\deg(F(y))$ has maximum degree $s$ only when $E_F=\mathcal{E}(f)$ and so $R_F$ is isomorphic to the algebra of square matrices over the field $E_F$.

\begin{lemma} \label{lem:matrixcasen=m}
    Let $f(x) \in R$ be a monic and irreducible monic polynomial of degree $h \geq 2$ and $f^*(x)=F(x^n)$. Then $\deg(F(y))=\deg(f)=h$ if and only if $\ell_F=1$ (and so $m=n$) and $E_F=\mathcal{E}(f)$. Moreover, if such condition is satisfied we have 
\[
\frac{R}{RF(x^n)} \cong M_n(E_F).
\]
\end{lemma}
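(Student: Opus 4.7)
The plan is to derive the statement as a direct synthesis of Proposition \ref{th:basicskewpolynomial}(ii) and Theorem \ref{th:isomorphismtheoremeigen}, so there is essentially no new content to produce beyond unpacking the definitions of $\ell_F$ and $\mathcal{E}(f)$. I do not expect any serious obstacle; the only care needed is to verify that each of the three conditions in the biconditional (the degree equality, $\ell_F=1$, and $E_F=\mathcal{E}(f)$) are truly pairwise equivalent.

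First I would set $F(y) = f^*(x)/\!\sim$ viewed as the minimal monic central multiple. Since $f$ is irreducible and monic with $h\geq 2$, its constant coefficient is nonzero (otherwise $x$ would be a proper divisor), so Proposition \ref{th:basicskewpolynomial}(ii) applies: $F(y)$ is irreducible in $\K[y]$ of degree $\deg(F)=h/\ell_F$. This immediately gives $\deg(F)=h \iff \ell_F=1$.

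Next I would invoke Theorem \ref{th:isomorphismtheoremeigen}, which tells us $\ell_F=n/m$, so $\ell_F=1$ is equivalent to $m=n$. Moreover, that same theorem identifies $\mathcal{E}(f)$ as a central division algebra over $E_F$ of degree $\ell_F$, meaning $\dim_{E_F} \mathcal{E}(f) = \ell_F^2$. Hence $\ell_F=1$ holds if and only if $\mathcal{E}(f)$ has $E_F$-dimension $1$, i.e. $\mathcal{E}(f)=E_F$ (the inclusion $E_F \subseteq \mathcal{E}(f)$ being automatic from the centre of $R_F$ lying in the eigenring of any irreducible divisor of $F(x^n)$). Chaining these equivalences yields the desired biconditional.

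Finally, when the equivalent conditions hold, the isomorphism \eqref{eq:artin} of Theorem \ref{th:isomorphismtheoremeigen} specialises to
\[
\frac{R}{RF(x^n)} \cong M_m(\mathcal{E}(f)) = M_n(E_F),
\]
which completes the proof.
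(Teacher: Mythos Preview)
Your proposal is correct and follows exactly the approach the paper takes: the paper's proof consists of the single sentence ``It is an easy consequence of \Cref{th:basicskewpolynomial} and \Cref{th:isomorphismtheoremeigen},'' and you have simply spelled out the details of that deduction. Nothing further is needed.
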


\begin{proof}
    It is an easy consequence of \Cref{th:basicskewpolynomial} and \Cref{th:isomorphismtheoremeigen}.
\end{proof}

In the case $\ell_F=1$, that is $n=m$, the characteristic polynomial of $A_f$ is equal to $F(y)$, see \cite[Theorem 3]{sheekey2020new}. In the next theorem, we investigate the relation between the characteristic polynomial of $A_f$ and the bound of $f$ without the assumption on $n$ and $m$.

For a skew polynomial $g(x)=\sum g_ix^i \in R$, we denote by $g^{\sigma}(x)$ the skew polynomial $\sum g_i^{\sigma}x^i$. The following result relates the characteristic polynomial of $A_f$ with the bound of $f$.

\begin{theorem} \label{th:charactesticAfinfinite}
Let $f(x) \in R$ be a monic irreducible polynomial of degree $h$ with nonzero constant coefficient, and let $f^*=F(x^n)$, with $F(y) \in \K[y]$. The following hold:
\begin{enumerate}[i)]
    \item the minimal polynomial of $\phi_f^n$ seen as $\K$-linear map is equal to the minimal polynomial of $\phi_f^n$ seen as $\LL$-linear map;
    \item the characteristic polynomial of $A_f$ is equal to $F^{\ell}(y)$, where $\ell=(n/m)$ and $m$ is the number of irreducible factors of $F(x^n)$ in $R$.
\end{enumerate}
If $\K$ is finite, then $\ell=1$.  
\end{theorem}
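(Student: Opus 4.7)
The plan is to prove (i) and (ii) in sequence, with the whole argument turning on a single Galois-invariance identity for the matrix $A_f$.

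For part (i), I would denote by $M_\K(y) \in \K[y]$ and $M_\LL(y) \in \LL[y]$ the minimal polynomials of $\phi_f^n$ viewed as a $\K$-linear, respectively $\LL$-linear, endomorphism of $R/Rf$. The divisibility $M_\LL(y) \mid M_\K(y)$ in $\LL[y]$ is immediate, because $M_\K(y) \in \K[y] \subseteq \LL[y]$ already annihilates $\phi_f^n$. The crucial direction is the converse, and I would obtain it by showing that $M_\LL(y)$ actually has coefficients in $\K$; once this is known, $M_\LL(y)$ will annihilate $\phi_f^n$ as a $\K$-linear map, and by monicity and minimality of $M_\K$ one concludes $M_\K(y) = M_\LL(y)$.

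To prove $M_\LL(y) \in \K[y]$, the plan is a Galois-invariance argument based on the product formula $A_f = C_f C_f^\sigma \cdots C_f^{\sigma^{n-1}}$ and the fact that $\sigma^n = \mathrm{id}$. Applying $\sigma$ entrywise to this product and shifting the indices gives the conjugacy relation
\[
A_f^\sigma \;=\; C_f^\sigma C_f^{\sigma^2} \cdots C_f^{\sigma^n} \;=\; C_f^{-1} A_f C_f,
\]
so $A_f$ and $A_f^\sigma$ are similar and hence have the same minimal polynomial. On the other hand, applying $\sigma$ entrywise to $M_\LL(A_f) = 0$ yields $M_\LL^\sigma(A_f^\sigma) = 0$, so $M_\LL^\sigma$ annihilates $A_f^\sigma$ and therefore $A_f$. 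By monicity and minimality $M_\LL^\sigma(y) = M_\LL(y)$, and since $\Gal(\LL/\K) = \langle \sigma \rangle$ this forces the coefficients of $M_\LL$ into the fixed field $\K$.

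For part (ii), combining (i) with \Cref{th:minimpolyminimalcentral} identifies $F(y)$ with the minimal polynomial of $A_f$ as an $\LL$-linear map. By \Cref{th:mcsheekeycharacteristicpolynomial} the characteristic polynomial $\chi_{A_f}(y)$ already lies in $\K[y]$, and $F(y)$ is irreducible in $\K[y]$ by \Cref{th:basicskewpolynomial}(ii). Since the minimal and characteristic polynomials of a matrix over $\K$ share the same irreducible factors in $\K[y]$, we must have $\chi_{A_f}(y) = F(y)^k$ for some $k \geq 1$. A degree count using $\dim_\LL R/Rf = \deg(f) = \ell \deg(F)$, which is \Cref{th:basicskewpolynomial}(i), then gives $k\deg(F) = \ell \deg(F)$ and therefore $k = \ell$. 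The finite-field case is immediate from \Cref{th:basicskewpolynomial}.

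The main obstacle, and really the only nontrivial step, is the identity $A_f^\sigma = C_f^{-1} A_f C_f$. Once this conjugacy relation is in hand, (i) reduces to standard linear algebra over the Galois extension and (ii) becomes a degree count against the irreducibility of $F(y)$ guaranteed by the bound. I expect the verification of the conjugacy identity to be a short direct manipulation of the product $C_f C_f^\sigma \cdots C_f^{\sigma^{n-1}}$ using $C_f^{\sigma^n} = C_f$.
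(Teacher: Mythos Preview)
Your proposal is correct, and part (i) takes a genuinely different route from the paper. The paper argues as follows: it factors $F(y)$ over $\LL$ as a product of Galois-conjugate irreducibles $h^{\sigma^i}$, then uses \Cref{th:mcsheekeycharacteristicpolynomial} (i.e.\ $\chi_{A_f}\in\K[y]$) together with the fact that the $\LL$-minimal polynomial and $\chi_{A_f}$ share irreducible factors over $\LL$ to push every $h^{\sigma^i}$ into $P_\LL$, forcing $F\mid P_\LL$. Your argument bypasses this factorisation entirely by establishing the conjugacy $A_f^{\sigma}=C_f^{-1}A_fC_f$ directly from the product formula (using that $f_0\ne 0$ makes $C_f$ invertible), whence $M_\LL^{\sigma}=M_\LL$ and so $M_\LL\in\K[y]$. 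This is cleaner and more self-contained; in fact your conjugacy identity already \emph{implies} \Cref{th:mcsheekeycharacteristicpolynomial}, so your later citation of it in (ii) is harmless but redundant. The paper's approach, by contrast, has the side benefit that the explicit $\LL$-factorisation of $F$ is already on the table when one reaches (ii), so the equal-multiplicity argument for $\chi_{A_f}=F^\ell$ is immediate.

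One small imprecision in your (ii): you invoke ``the minimal and characteristic polynomials of a matrix over $\K$'', but $A_f$ is a matrix over $\LL$, not $\K$. The fix is routine: apply the shared-irreducible-factors fact over $\LL$, then observe that since both $F$ and $\chi_{A_f}$ lie in $\K[y]$, any $\K$-irreducible factor $p$ of $\chi_{A_f}$ has $\gcd_\LL(p,F)\ne 1$, hence $\gcd_\K(p,F)\ne 1$, hence $p=F$ by irreducibility of $F$ in $\K[y]$. The degree count then gives $k=\ell$ exactly as you wrote.
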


\begin{proof}
Let $P_K(y)$ and $P_L(y)$ be the minimal polynomial of $\phi_f^n$ seen as $\K$-linear map and $\LL$-linear map, respectively. Recall that, by Theorem \ref{th:minimpolyminimalcentral}, we have that $P_K(y)=F(y)$ and $F(y)$ is also irreducible in $\K[y]$ by $(ii)$ of Theorem \ref{th:basicskewpolynomial}. Let $h(y)$ be an irreducible polynomial in $\LL[y]$ that divides $P_K(y)$. Let \[e=\min\{i \colon h^{\sigma^i}(y)=h(y)\},\] where if $h(y)=\sum_{i} h_iy^i$, then $h^{\sigma^i}(y)=\sum_{i} h_i^{\sigma^i}y^i$.
Since $P_K(y) \in \K[y]$, also $h^{\sigma^i}(y)$ divides $P_K(y)$, for each $i=\{0,\ldots,e-1\}$ as well. Since 
\[
\prod_{i=0}^{e-1}h^{\sigma^i}(y) \in \K[y],
\]
and $P_K(y)$ is irreducible in $\K[y]$ then, we get
\[
F(y)=P_K(y)=\prod_{i=0}^{e-1}h^{\sigma^i}(y),
\]
where the $h^{\sigma^i}(y)$'s are pairwise coprime and irreducible in $\LL[y]$. Let $A(y)$ be the characteristic polynomial of $A_f$. By Theorem \ref{th:mcsheekeycharacteristicpolynomial}, we have that $A(y) \in \K[y]$ and so $P_K(y) \mid A(y)$. This implies that $ h^{\sigma^i}(y) $ divides $A(y)$ in $\LL[y]$ for each $i \in \{1,\ldots,e-1\}$. Hence, since $A(y)$ and $P_L(y)$ have the same irreducible factors in $\LL[y]$ we have that $h^{\sigma^i}(y) $ divides $P_L(y)$, for each $i \in \{1,\ldots,e-1\}$. Therefore, $F(y)=P_K(y)$ divides $P_L(y)$ (and $P_L(y)$ divides $P_K(y)=F(y)$) and so $F(y)=P_L(y)$. Hence the minimal polynomial $F(y)$ of $\phi_f^n$ seen as $\K$-linear map coincides with the minimal polynomial of $\phi_f^n$ seen as $\LL$-linear map, which proves i). Now, the factors of $A(y)$ are the $h^{{\sigma}^i}(y)$'s and each have the same multiplicity, since $A(y) \in \K[y]$. As a consequence $A(y)=F(y)^{\ell}$, since $\deg(F(y))=h/\ell$, which proves the assertion.  
\end{proof}

In the case of finite fields, since $n=m$ and so $\ell=1$, as already noted in \cite{sheekey2020new}, the minimal polynomial of $\phi_f^n$ over $\K$ coincide with the characteristic polynomial of $A_f$. However over infinite fields, the above result shows that it can happen that $\ell=n/m>1$ and so in such a case the characteristic polynomial of $A_f$ is the $\ell$-power of the the minimal polynomial of $\phi_f^n$ over $\K$.

\begin{example}
\begin{itemize}
    \item Let $\LL=\mathbb{C}, \K =\mathbb{R}$, and $\sigma$ be the complex conjugation and $R=\mathbb{C}[x;\sigma]$. Note that in this case $n=2$. Let consider the polynomial $f(x)=x^2+1 \in R$. It can be easily seen that $f(x)$ is an irreducible polynomial in $R$. Moreover, 
\[
C_f=\begin{pmatrix}
    0 & -1 \\
    1 & 0
\end{pmatrix} \mbox{ and } A_f=C_f C_f^{\sigma}=\begin{pmatrix}
    -1 & 0 \\
    0 & -1
\end{pmatrix}.
\]
So the characteristic polynomial of $A_f$ is equal to $(y+1)^2$. Alternatively, the bound of $f(x)$ is $F(x^2)$, where $F(y)=y+1$. In this case $\frac{R}{(F(x^2))}=\frac{R}{(x^2+1)}$ is isomorphic to Hamilton's quaternion algebra, and so $m=1$ and $\ell=n/m=2$. Then by Theorem \ref{th:charactesticAfinfinite}, the characteristic polynomial of $A_f$ is equal to   $F(y)^2=(y+1)^2$.
\item Consider the setting of Section \ref{sec:Javier}, and consider the polynomial $f(x)=x^2+\frac{t^2+1}{t^2+t+1} \in  \F_{2^{r}}(t)[x;\sigma]$. We have that
\[
C_f=\begin{pmatrix}
    0 & \frac{t^2+1}{t^2+t+1} \\
    1 & 0
\end{pmatrix} \mbox{ and } A_f=C_f C_f^{\sigma}=\begin{pmatrix}
    \frac{t^2+1}{t^2+t+1} & 0 \\
    0 & \frac{t^2+1}{t^2+t+1}
\end{pmatrix}.
\]
Hence, the characteristic polynomial of $A_f$ is equal to 
$\left(y+\frac{t^2+1}{t^2+t+1}\right)^2$. Alternatively, the bound of $f(x)$ is $F(x^n)$, where $F(y)=y+\frac{t^2+1}{t^2+t+1}$ and so $m=r$ and $\ell=n/m=2$. As a consequence of Theorem \ref{th:charactesticAfinfinite}, the characteristic polynomial of $A_f$ is equal to  $F(y)^2=\left(y+\frac{t^2+1}{t^2+t+1}\right)^2$.
\end{itemize}
\end{example}

Note that if $f(x) \in R$ is an irreducible polynomial, then by (ii) of Theorem \ref{th:basicskewpolynomial}, we have that $\ell$ divides $\deg(f(x))$. So, we can always assume that the degree of $f(x)$ is a multiple of $\ell$.

Using the fact that the characteristic polynomial of $A_f$ is a power of the bound of $f$, we get the following results that will play a key role for
the constructions of division algebras and MRD codes that we will provide in Section \ref{sec:extendingSheekey}.
For finite fields, these results were already proved in \cite[Theorem 4]{sheekey2020new} and \cite[Theorem 5]{sheekey2020new}.

\begin{theorem} \label{th:normskew} Let $R=\LL[x;\sigma]$. Then the following holds:
\begin{enumerate}[a)]
    \item Let $f(x) \in R$ be monic and irreducible with non zero constant coefficient and let $f^*(x)=F(x^n)$. If $\deg(f(x))=s \ell$, where $\ell=\ell_F$ is as in \eqref{eq:rationn/nm}, then
\[
N_{\LL/\K}(f_0)=(-1)^{s\ell (n-1)}F_0^{\ell},
\]
where $f_0$ and $F_0$ are the constant coefficients of $f(x)$ and $F(x^n)$, respectively. 
\item If $F(y)$ is a monic irreducible polynomial of $\K[y]$ having degree $s$, and $g(x)$ is a monic divisor of $F(x^n)$ in $R$ of degree $sh\ell $, for some $h \in \{1,\ldots,m\}$, where $\ell=\ell_F$ is as in \eqref{eq:rationn/nm}, then
\[
\N_{\LL/\K}(g_0)=(-1)^{sh\ell (n-1)}F_0^{h\ell},
\]
where $g_0$ and $F_0$ are the constant coefficients of $g(x)$ and $F(x^n)$, respectively. 
\end{enumerate}
\end{theorem}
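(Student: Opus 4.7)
The plan is to derive both identities simultaneously from two different ways of computing $\det(A_f)$, where $A_f = C_f C_f^\sigma \cdots C_f^{\sigma^{n-1}}$ is the matrix from \Cref{sec:mzlm}. Part (a) will come from a direct comparison, and part (b) will be reduced to (a) by factoring $g$ into irreducibles and tracking the constant term through multiplication in $R$.

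For part (a), the key input is \Cref{th:charactesticAfinfinite}(ii): the characteristic polynomial of the $s\ell \times s\ell$ matrix $A_f$ equals $F(y)^{\ell}$. Setting $y=0$ in $\det(yI - A_f)=F(y)^{\ell}$ yields
\[
(-1)^{s\ell}\det(A_f) \;=\; F_0^{\ell}.
\]
On the other hand, since $\det$ is multiplicative and $\sigma$ acts entry-wise,
\[
\det(A_f) \;=\; \prod_{i=0}^{n-1}\sigma^i(\det(C_f)) \;=\; \N_{\LL/\K}(\det(C_f)).
\]
Because $C_f$ is the companion matrix of the monic polynomial $f$ of degree $s\ell$, its characteristic polynomial is $f(x)$, hence $\det(C_f) = (-1)^{s\ell} f_0$. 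Since $(-1)^{s\ell}\in\K$, taking the norm gives $\N_{\LL/\K}(\det(C_f)) = (-1)^{s\ell n}\N_{\LL/\K}(f_0)$. Equating the two expressions for $\det(A_f)$ and rearranging the signs (using $(-1)^{-s\ell(n-1)}=(-1)^{s\ell(n-1)}$) yields the claim.

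For part (b), factor $g$ as $g = g_1 g_2 \cdots g_h$ with each $g_i \in R$ irreducible. By \Cref{th:basicskewpolynomial}(i) every irreducible divisor of $F(x^n)$ has degree $s\ell$, so each $g_i$ has degree $s\ell$. Moreover, each $g_i$ has bound exactly $F(x^n)$: its bound $g_i^* = F_i(x^n)$ is a two-sided polynomial contained in $Rg_i$, hence in $Rg_i\cap Z(R)$, so $F_i(y)\mid F(y)$ in $\K[y]$; as $F(y)$ is irreducible and both are monic with $F_i(y)\neq 1$, we get $F_i(y)=F(y)$. Thus part (a) applies to each $g_i$, giving $\N_{\LL/\K}((g_i)_0) = (-1)^{s\ell(n-1)}F_0^{\ell}$.

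Finally, the multiplication rule $xa = a^\sigma x$ in $R$ shows that the constant term of a product is the product of constant terms: only the terms with $i=j=0$ contribute to $(fg)_0$ when expanding $fg = \sum_{i,j} a_i b_j^{\sigma^i} x^{i+j}$. Hence $g_0 = \prod_{i=1}^{h}(g_i)_0$ and the norm is multiplicative, so
\[
\N_{\LL/\K}(g_0) \;=\; \prod_{i=1}^{h}\N_{\LL/\K}((g_i)_0) \;=\; \bigl((-1)^{s\ell(n-1)}F_0^{\ell}\bigr)^{h} \;=\; (-1)^{sh\ell(n-1)}F_0^{h\ell}.
\]
The only genuinely delicate step is the invocation of \Cref{th:charactesticAfinfinite}(ii) (the power $F(y)^\ell$, not $F(y)$, appearing as the characteristic polynomial of $A_f$); once that is available, the rest is bookkeeping of signs and of the degrees $s$, $\ell$, $n$, and $h$.
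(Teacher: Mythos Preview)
Your proof is correct and follows essentially the same route as the paper's own proof: both parts hinge on \Cref{th:charactesticAfinfinite}(ii) to identify $F_0^\ell$ with $(-1)^{s\ell}\det(A_f)$, then compute $\det(A_f)$ via $\det(C_f)=(-1)^{s\ell}f_0$ and the multiplicativity of the norm; part (b) is reduced to (a) by factoring $g$ into irreducibles of degree $s\ell$ and using that the constant term of a skew product (with zero derivation) is the product of constant terms. Your justification that each irreducible factor $g_i$ has bound exactly $F(x^n)$ is slightly misphrased (the relevant point is that $RF(x^n)$ is a two-sided ideal contained in $Rg_i$, hence contained in $Rg_i^*=RF_i(x^n)$, forcing $F_i\mid F$), but the conclusion is the one the paper also uses.
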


\begin{proof}
\begin{enumerate}[a)]
    \item 
    By Theorem \ref{th:charactesticAfinfinite}, we know that the characteristic polynomial of $A_f$ is equal to $F(y)^{\ell}$. This implies that
   \[
   F_0^{\ell}=F(0)^{\ell}=\det(-A_f)=(-1)^{s\ell}\det(A_f).
   \]
   Recalling that $\det(A_f)=(-1)^{\ell ns}\N_{\LL/\K}(f_0)$, we have the assertion.
   \item 
    Let $g(x)=h_1(x)\cdots h_t(x)$ be a factorization into irreducible polynomials. Then each $h_i(x)$ divides $F(x^n)$.
   By i) of Theorem \ref{th:basicskewpolynomial}, we have that each $h_i(x)$ has degree $s\ell$ and $t=h$. Moreover, by part ii) of Theorem \ref{th:basicskewpolynomial}, we get that $F(x^n)$ is the bound of each $h_i(x)$. Using i) of this theorem, we know that the constant coefficient of $h_i(x)$ has norm from $\LL$ to $\K$ equal to $(-1)^{s\ell(n-1)}F_0^{\ell}$, for each $i$, and so since $g_0$ is the product of the $h_i(x)$'s, we have the assertion.
\end{enumerate}
\end{proof}

Via the isomorphism in \eqref{eq:artin}, we can identify an element in $R/R F(x^n)$ with its associated matrix in $M_m(\mathcal{E}(f))$, 
Moreover, by abuse of notation, we will often identify the class $a+RF(x^n) \in R/R F(x^n)$, with a representative $a(x) \in R$. The following theorem allows us to calculate the rank of an element of an element of $M_m(\mathcal{E}(f))$ by working entirely in $R$.

\begin{theorem}[see \textnormal{\cite[Proposition 7]{sheekey2016new} and \cite[Theorem 6]{thompson2023division}}] \label{th:rankpolynomial} 
Let $F(y)$ be an irreducible polynomial of $\K[y]$ having degree $s$ and let $m$ be the number of irreducible factors of $F(x^n)$ in $R$ and $\ell=(n/m)$. Then for a non zero polynomial $a(x) \in R$, it holds that 
\[
\rk(a(x))=m - \frac{1}{s\ell}\deg(\mathrm{gcrd}(a(x),F(x^n))).
\]
If $\K$ is finite, then $n=m$ and  
\[
\rk(a(x))=n - \frac{1}{s}\deg(\mathrm{gcrd}(a(x),F(x^n))).
\]
\end{theorem}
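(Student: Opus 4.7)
The plan is to compute $\dim_{E_F}(R_F/R_F\bar a)$, where $\bar a := a+RF(x^n)$, in two distinct ways---once directly in the skew polynomial ring $R$, and once via the Artin--Wedderburn isomorphism $R_F\cong M_m(\mathcal{E}(f))$ of \Cref{th:isomorphismtheoremeigen}---and then to equate the two expressions.

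First, working in $R$: since $R$ is a left PID by \Cref{thm:skewprop}(i), we have $Ra+RF(x^n)=Rd$ with $d:=\gcrd(a,F(x^n))$, so
\[
R_F/R_F\bar a \;\cong\; R/(Ra+RF(x^n)) \;=\; R/Rd
\]
as left $R$-modules. This quotient has $\LL$-dimension $\deg(d)$ (basis $\{1,x,\ldots,x^{\deg(d)-1}\}$ modulo $Rd$), hence $\K$-dimension $n\deg(d)$, and since $[E_F:\K]=s$, its $E_F$-dimension is $n\deg(d)/s$.

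Second, identify $\bar a$ with a matrix $M\in M_m(\mathcal{E}(f))$ of rank $r:=\rk(\bar a)$ via the isomorphism. The left ideal $M_m(\mathcal{E}(f))\cdot M$ is exactly the set of matrices whose rows lie in the row span of $M$, a left $\mathcal{E}(f)$-subspace of $\mathcal{E}(f)^m$ of dimension $r$. Using $\dim_{E_F}\mathcal{E}(f)=\ell^2$ (from \Cref{th:isomorphismtheoremeigen}), this row span has $E_F$-dimension $r\ell^2$; since each of the $m$ rows of a matrix in $M_m(\mathcal{E}(f))\cdot M$ is independently constrained to it, we get
\[
\dim_{E_F}\bigl(M_m(\mathcal{E}(f))\cdot M\bigr)=mr\ell^2, \qquad \dim_{E_F}\bigl(M_m(\mathcal{E}(f))/M_m(\mathcal{E}(f))\cdot M\bigr) = m(m-r)\ell^2.
\]
Equating with the first computation and substituting $n=m\ell$ yields $\deg(d)=(m-r)s\ell$, i.e.\ $r=m-\deg(d)/(s\ell)$, as claimed. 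The finite-field case is immediate from $\ell=1$ (by \Cref{th:basicskewpolynomial}).

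The main obstacle is the matrix-side count: one must carefully verify that left ideals of the noncommutative matrix ring $M_m(\mathcal{E}(f))$ correspond to left-$\mathcal{E}(f)$ row-subspaces of $\mathcal{E}(f)^m$, and confirm that the ``$\rk$'' in the statement agrees with this row-span dimension (equivalently, the column-span dimension, since these coincide over a division ring). Once the sidedness is nailed down, the rest reduces to routine dimension bookkeeping, and serves as a sanity check: the formula forces $s\ell \mid \deg(\gcrd(a,F(x^n)))$, consistent with \Cref{th:basicskewpolynomial}(i), and degenerates correctly when $\bar a = 0$ (so $\gcrd(a,F(x^n)) = F(x^n)$ of degree $ns = sm\ell$, giving $\rk = 0$).
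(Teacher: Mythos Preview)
The paper does not give its own proof of this statement; it simply cites \cite[Proposition~7]{sheekey2016new} and \cite[Theorem~6]{thompson2023division}. Your argument is correct and is essentially the standard one: compute $\dim_{E_F}(R_F/R_F\bar a)$ on the skew-polynomial side via $R_F/R_F\bar a\cong R/R\gcrd(a,F(x^n))$, compute it on the matrix side via the left ideal generated by a rank-$r$ matrix, and equate. The sidedness check you flag is exactly the point that needs care, and you have it right: the $i$-th row of $NM$ is $\sum_k N_{ik}m_k$, a \emph{left} $\mathcal{E}(f)$-combination of the rows $m_k$ of $M$, so $M_m(\mathcal{E}(f))M$ is the set of matrices with rows in the left row span of $M$, whose left $\mathcal{E}(f)$-dimension is $\rk(M)$ by the paper's definition of rank (right column rank $=$ left row rank over a division ring). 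The dimension bookkeeping then goes through as you wrote.
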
 

\begin{remark}
   Let $\LL=\fqn$, $s=1$, $F(y) = y-1$, $x^\sigma = x^{q}$. 
   Identifying $a=\sum_{i=0}^{n-1}a_i x^i$ with the linearised polynomial $A = \sum_{i=0}^{n-1}a_i x^{q^i}$, we get that $R/RF(x^n)$ is isomorphic to the ring of linearised polynomials with composition, modulo $x^{q^n}-x$. Also, in this case and irreducible divisor $f$ of $F(x^n)$ has degree 1 and $\mathcal{E}(f)=\F_q$. Moreover, the map           
   \begin{equation}
\begin{array}{lcrc}
      & \frac{R}{RF(x^n)} & \longrightarrow & \End_{\F_q }(\F_{q^n}) \\
          & a=\sum_{i=0}^{n-1}a_ix^i & \longmapsto & L_a: \beta \in \F_{q^n}  \mapsto \sum_{i=0}^{n-1}a_i\beta^{q^i}
\end{array} 
\end{equation}
is an $\F_q$-algebra isomorphism between $R/R(x^n-1)$ and $M_n(\F_q).$
   Finally, we can see that \Cref{th:rankpolynomial}  matches with the rank of a corresponding linearized polynomial as a linear map on $\LL$. The number of roots of $A$ in $\LL$ is $\deg(\gcd(A,X^{q^n}-X))$, and we have
\[
\rk(A) = n-\log_q \deg(\gcd(A,X^{q^n}-X)).
\]
\end{remark}

As a consequence of Theorem \ref{th:normskew} and Theorem \ref{th:rankpolynomial}, we get the following.

\begin{theorem} \label{th:boundrankinfinite} 
Let $F(y)$ be a monic irreducible polynomial in $\K[y]$ having degree $s$. Assume that $m$ is the number of irreducible factors of $F(x^n)$ in $R$ and let $\ell=\ell_F=n/m$. 
    If $a(x)=a_0+a_1x+\cdots+a_{sk\ell} \in R/R F(x^n)$ is a non zero polynomial of degree at most $s k \ell$, with $k \leq m$, then the rank of the element $a(x)$ is at least $m-k$. Furthermore, if the rank of $a(x)$ is equal to $m-k$, then $\deg(a(x))=sk\ell$ and
    \[
    \frac{\N_{\LL/\K}(a_0)}{\N_{\LL/\K}(a_{sk\ell})}=(-1)^{sk\ell(n-1)}F_0^{k\ell},
    \]
    where $F_0$ is the constant coefficient of $F(x^n)$. 
\end{theorem}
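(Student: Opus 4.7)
The plan is to derive this essentially by combining \Cref{th:rankpolynomial} (which converts rank to the degree of a right gcd), \Cref{th:basicskewpolynomial}(i) (which restricts the possible degrees of divisors of $F(x^n)$), and \Cref{th:normskew}(b) (which gives the norm of the constant coefficient of such a divisor). No new idea is required; everything has been set up for this in Sections 2--3.

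First I would let $g(x) := \gcrd(a(x),F(x^n))$, and take $g$ to be monic. By \Cref{th:rankpolynomial},
\[
\rk(a(x)) \;=\; m - \frac{1}{s\ell}\deg(g).
\]
Since $g$ is a monic right divisor of $F(x^n)$, \Cref{th:basicskewpolynomial}(i) forces $\deg(g) = hs\ell$ for some $h \in \{0,1,\dots,m\}$. Furthermore, because $g$ right-divides $a(x)$, we have $\deg(g) \leq \deg(a(x)) \leq sk\ell$, so $h \leq k$. This gives $\rk(a(x)) = m - h \geq m - k$, proving the first assertion.

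For the equality case, suppose $\rk(a(x)) = m - k$. Then $h = k$, so $\deg(g) = sk\ell$, and combined with $\deg(g) \leq \deg(a(x)) \leq sk\ell$ we get $\deg(a(x)) = sk\ell$. In particular $a_{sk\ell} \neq 0$, and since $R$ is a domain, writing $a(x) = q(x)g(x)$ with $q(x) \in R$ forces $\deg(q) = 0$; hence $q \in \LL$, and because $g$ is monic we must have $q = a_{sk\ell}$. So $a(x) = a_{sk\ell}\,g(x)$, whence the constant coefficients satisfy $a_0 = a_{sk\ell}\,g_0$, and therefore
\[
\N_{\LL/\K}(a_0) \;=\; \N_{\LL/\K}(a_{sk\ell})\cdot \N_{\LL/\K}(g_0).
\]
Now apply \Cref{th:normskew}(b) to the monic divisor $g$ of $F(x^n)$ of degree $sk\ell$: this yields $\N_{\LL/\K}(g_0) = (-1)^{sk\ell(n-1)} F_0^{k\ell}$. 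Dividing gives the claimed identity. The argument is essentially a bookkeeping assembly of the previous results; the only subtle point is the step from ``monic right divisor of $a$ of the same degree'' to ``$a$ is a scalar multiple of it'', which is immediate because $R$ is a domain and degree is additive under multiplication.
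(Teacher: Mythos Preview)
Your proof is correct and follows essentially the same route as the paper: both apply \Cref{th:rankpolynomial} to bound $\deg(\gcrd(a,F(x^n)))$ by $\deg(a)\le sk\ell$, and in the equality case deduce that $a_{sk\ell}^{-1}a$ is a monic divisor of $F(x^n)$ of degree $sk\ell$, then invoke \Cref{th:normskew}(b). Your version is slightly more explicit (you spell out $a=a_{sk\ell}\,g$ via the degree argument, and you note the quantisation of $\deg(g)$ from \Cref{th:basicskewpolynomial}(i), which the paper uses only implicitly), but there is no substantive difference.
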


\begin{proof}
    Using Theorem \ref{th:rankpolynomial}, we have that 
    \[
\rk(a(x))=m - \frac{1}{s\ell}\deg(\mathrm{gcrd}(a(x),F(x^n))) \geq m-\frac{1}{s\ell}\deg(a(x)) \geq m-k.
\]
If $\rk(a(x))=m-k$, then $a(x)$ must have degree $sk\ell$ and it is a divisor of $F(x^n)$. So $a_{sk\ell}^{-1}a(x)$ is a monic divisor of $F(x^n)$, and so by part b) of Theorem \ref{th:normskew}, we have 
\[
\N_{\LL/\K}\left(\frac{a_0}{a_{sk\ell}}\right)=(-1)^{sk\ell(n-1)}F_0^{k\ell},
\]
and so the claim is proved.
\end{proof}

The above theorem plays a crucial role in the construction of division algebras and MRD codes that we show in Section \ref{sec:extendingSheekey}. This extends \cite[Theorem 5]{sheekey2020new} proved for the finite field case and \cite[Theorem 10]{thompson2023division} proved for infinite fields in the case $n=m$. See also \cite{pumpluen2022norm} for this property, derived using the norm of a skew polynomial.

\begin{remark}
Choosing $\LL=\F_{q^n}$, $\K=\F_q$, $s = 1$ and $F(y) = y - 1$ , the above statement translates to the fact that a linearized polynomial
$g = g_0x+g_1x^{\sigma}+\ldots+g_{k-1}x^{\sigma^{k-1}}+x^{\sigma^k}$
has all of its roots in $\LL$ only if $\N_{\F_{q^n}/\F_q}(g_0)=(-1)^{kn}$.
\end{remark}

\section{Division Algebras and MRD codes}\label{sec:2}

In this section we recall the necessary definitions of division algebras/semifields, and rank-metric codes.

\subsection{Division Algebras and semifields}
\label{ssec:divalg}

Let $\F$ be a field and $\A$ be a vector space over $\F$. We say that $\A$ is a (non necessarily associative) algebra over $\F$ (or $\F$-algebra) if there exists an $\F$-bilinear map (called multiplication) \[\star:\A \times \A \rightarrow \A, \ \ \ (a,b) \mapsto a \star b. \] We say that an $\F$-algebra $\A$ is unital, if there exists a multiplicative identity $1 \in \A$ such that $
1 \star a = a \star 1=a,$
for all $a \in \A$. For any $a \in \A$, left multiplication and right multiplication by $a$ define the endomorphisms $L_a, R_a \in \End_{\F}(\A)$ as:
\[
L_a: b \longmapsto a \star b, \ \ \ R_a: b \longmapsto b \star a,
\]
for each $b \in \A$.  We say that $\A$ is a division algebra if for all nonzero $a \in \A$, the maps $L_a$ and $R_a$ are invertible maps. It is well-known, that when $\A$ is a finite dimensional algebra over $\F$, then $\A$ is division algebra if and only if there are no zero divisors. (Unital) division algebras with a finite number of elements are also known as (\textbf{semifields}) \textbf{presemifields}.

We use {\it isotopy} as the notion of equivalence between algebras. 
Two division algebras $(\A, +, \star)$ and $(\A', +', \star')$ over the same field $\F$ are said to be \textbf{isotopic} if there exist invertible additive maps $h_1, h_2, h_3$ from $\A$ to $\A'$ such that
\[
h_1(a \star b)=h_2(a) \star' h_3(b),
\]
for all $a,b \in \A$.  Clearly, when $h_1=h_2=h_3$, then $(\A, +, \star)$ and $(\A', +', \star')$ are isomorphic as algebras. The concept of isotopy of algebras was introduced by Albert in \cite{albert1942non}, as a more general notion of isomorphism. In his paper, he assumed that $h_1,h_2,h_3$ are also $\F$-linear, so in this case, we also say that $\A$ and $\A'$ are $\F$-\textbf{isotopic}. 

Note that properties such as multiplicative identity or commutativity are not preserved under isotopy. It is well-known that all division algebras are isotopic to a unital division algebra.

The \textbf{left-}, \textbf{middle-}, and \textbf{right-}nucleus of an algebra $(\A,+,\star)$ are three subsets of $\A$ defined as follows
\begin{align*}
{\NN}_l(\A)&:=\{a \in \A\colon a \star (b\star c)=(a\star b)\star c, \mbox{ for all } b,c \in \A\}, \\
{\NN}_m(\A)&:=\{b \in \A \colon a \star (b\star c)=(a\star b)\star c, \mbox{ for all } a,c \in \A \}, \\
{\NN}_r(\A)&:=\{c \in \A \colon a \star (b\star c)=(a\star b)\star c, \mbox{ for all } a,b \in \A\}.
\end{align*}
The \textbf{nucleus} $\NN(\A)$ of $\A$ is the intersection of these three sets, and the \textbf{centre} $Z(\A)$ is defined as
\[
Z(\A) := \{a \in \NN(\A) \colon a\star b = b \star a \mbox{ for all } b\in \A\}.
\]
Each of the nuclei is an associative subalgebra of $\A$. The centre is an associative, commutative subalgebra of $\A$, containing the field $\F$ when $\A$ is unital, since we can identify $\F1_A \subseteq Z(\A)$. 
Moreover, if $\A$ is unital and is finite-dimensional over $\F$, each of the nuclei is an associative division algebra.

In the case of semifields, each of nuclei and the centre are finite fields. In the rest of the paper by algebra over $\F$ or $\F$-algebra, we always mean a finite-dimensional algebra over $\F$.

\subsection{Rank-metric codes}

Let $\D$ be a division ring. The \textbf{rank} of a matrix $A \in M_n(\D)$ is the dimension of the right $\D$-vector space generated by the columns of $A$ and it is still denoted by $\rk(A)$. Note that, this also coincide with the dimension of the left $\D$-vector space generated by the rows of $A$. Using this notion of rank, a \textbf{rank-metric code} in $M_n(\D)$ is a subset of $M_n(\D)$, endowed with the rank-distance function
\[
d(A,B) = \rk(A-B).
\]
The minimum distance $d$ of a code $\C \subseteq M_n(\D)$ is $d=\min\{\rk(A-B)\colon A,B \in \C, A \neq B\}$. If $\F'$ is a subfield of $\D$ such a code is said to be {\it $\F'$-linear} if it is closed under addition and $\F'$-multiplication.

In the case where $\D$ is a field, Delsarte \cite{delsarte1978bilinear} proved a Singleton-like bound relating the dimension of a code to its minimum distance.
This was extended also to the division ring case, see e.g. \cite[Theorem 2.2.2]{thompson2021new}. We state the most general version here.
\begin{theorem}  Let $\C \subseteq M_{n}(\D)$ be a $\F'$-linear rank-metric code. Assume that the dimension of $\D$ as $\F'$-vector space $[\D:\F']$ is finite.  Then 
\begin{equation} \label{eq:singletonbound}
\dim_{\F'}(\C) \leq n(n-d+1)[\D:\F'].
\end{equation}
\end{theorem}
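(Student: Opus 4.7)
The plan is to adapt Delsarte's classical puncturing argument to the division-ring setting, exploiting the fact that rank over $\D$ is symmetric (column rank equals row rank). Concretely, I would consider the $\F'$-linear map
\[
\pi \colon M_n(\D) \longrightarrow M_{n-d+1,\,n}(\D)
\]
that sends a matrix to the submatrix formed by its first $n-d+1$ rows. Deletion of rows is manifestly $\F'$-linear (in fact $\D$-linear from both sides), so $\pi|_{\C}$ is an $\F'$-linear map.

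Next I would argue that $\pi|_{\C}$ is injective. Suppose $A \in \C$ lies in $\ker(\pi|_{\C})$. Then the first $n-d+1$ rows of $A$ vanish, so at most $d-1$ rows of $A$ are nonzero. Since over a division ring the column rank $\rk(A)$ coincides with the row rank (the dimension of the left $\D$-span of the rows), we obtain $\rk(A) \leq d-1$. Because $\C$ has minimum distance $d$, every nonzero element of $\C$ has rank at least $d$; hence $A = 0$. Thus $\pi$ embeds $\C$ $\F'$-linearly into $M_{n-d+1,\,n}(\D)$.

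Finally, a dimension count finishes the proof. An $(n-d+1) \times n$ matrix over $\D$ has $n(n-d+1)$ entries, each of which contributes $[\D:\F']$ (which is finite by hypothesis) to the $\F'$-dimension, so
\[
\dim_{\F'} M_{n-d+1,\,n}(\D) = n(n-d+1)[\D:\F'],
\]
and the Singleton-type inequality follows from the injection $\C \hookrightarrow M_{n-d+1,\,n}(\D)$.

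The main obstacle is conceptual rather than computational: one must be comfortable switching between the paper's definition of $\rk(A)$ as the right $\D$-dimension of the column span and the row-rank interpretation needed to bound the rank of a matrix with many zero rows, and hence invoke the equality of row and column rank over a division ring (a classical fact, but one that is not entirely trivial in the noncommutative setting). Once that symmetry is acknowledged, the argument is a one-line puncturing and no delicate quantitative estimates are needed.
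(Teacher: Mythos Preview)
Your argument is correct. Note, however, that the paper does not give its own proof of this statement: it is quoted as a known result with a reference to \cite[Theorem 2.2.2]{thompson2021new}, so there is no in-paper proof to compare against. The puncturing/projection argument you give is precisely the classical one underlying Delsarte's bound, and it goes through unchanged over a division ring once one uses that row rank equals column rank; this is exactly the standard route taken in the references the paper cites.
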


A code attaining the above bound is said to be a \textbf{Maximum Rank Distance} code, or \textbf{MRD code}. 

Delsarte \cite{delsarte1978bilinear} showed that $\F$-linear MRD codes in $M_{n}(\F)$ can be constructed for any finite field $\F$ and every $n$ and $d$, with $d\leq n$ (and in fact proved a more general version for rectangular matrices, but we will not deal with this case in this paper). These codes were independently rediscovered by Gabidulin \cite{gabidulin1985theory} in the equivalent formulation as codes in $\LL^m$, where $\LL$ is a finite extension field of $\F$ and have come to be known as \textbf{Gabidulin codes}.

For an automorphism or anti-automorphism $\rho$ of $\D$ and a matrix $A \in M_n(\D)$, $A^{\rho}$ is the matrix obtained from $A$ by applying $\rho$ to all 
its entries. The code equivalence for rank-metric codes in $M_n(\D)$ is defined by means of isometries of the whole space. More precisely, recall that a (rank-metric) isometry of $M_n(\D)$ is a bijection $\phi:M_n(\D) \rightarrow M_n(\D)$ that preserves the rank distance, i.e. $\rk(\phi(A)-\phi(B))=\rk(A-B)$, for any $A,B \in M_n(\D)$. In \cite{hua1951atheorem} and \cite{wan1996aproof}, it is proved that if $\phi$ is an isometry of $M_n(\D)$, then $\phi$ is of the form $\phi(A)=XA^{\rho}Y+Z$, for all $A \in M_n(\D)$, where $X,Y \in \GL_n(\D)$, $Z\in M_n(\D)$ and $\rho$ is an automorphism of $\D$ or
$\phi$ is of the form $\phi(A)=X(A^{\top})^{\rho}Y+Z$, for all $A \in M_n(\D)$, where $X,Y \in \GL_n(\D)$, $Z\in M_n(\D)$ and $\rho$ is an anti-automorphism of $\D$. In this paper, we do not take into consideration the latter form for $\phi$. So, we say that two rank-metric codes $\C,\C' \subseteq M_n(\D)$ are equivalent if 
\begin{equation} \label{eq:equivalencecode}
\C'=X \C^{\rho} Y+Z=\{XA^{\rho}Y +Z \colon A \in \C\},
\end{equation}
for some $X,Y \in \GL_n(\D), Z\in M_n(\D)$ and $\rho$ an automorphism of $\D$. Note that we can assume $Z=0$, when the codes are additive (i.e. $\C,\C'$ are additive subgroups of $M_n(\D)$). When $\rho$ is the identity, we also say that $\C$ and $\C'$ are \textbf{linearly equivalent}.

\subsection{Division algebras as MRD codes with $d=n$}
\label{ssec:divalgmrd}

Given a (not necessarily associative) division algebra $\A$, we can form a rank-metric code by taking all the endomorphisms $L_a$ as defined in Section \ref{ssec:divalg} to form the {\it spread set}
\[
\C(\A) := \{L_a:a\in \A\}\subset \End(\A).
\]
This set is additively closed. As mentioned, each of these maps are invertible for $a\ne 0$. The algebra $\A$ is a left-vector space over its right nucleus, and the maps maps $L_a$ can be viewed as elements of $\End_{\NN_r(\A)}(\A)$. Letting $n$ denote the dimension of $\A$ over $\NN_r(\A)$, we can, with slight abuse of notation, view the spread set as a rank-metric code
\[
\C(\A)\subset M_n(\NN_r(\A)).
\]
Then every nonzero element of $\C(\A)$ has rank $n$, implying that $\C(\A)$ is an MRD code with minimum distance $n$. If $\A$ is unital, then $\C(\A)$ will contain the identity matrix.

Conversely, given an additively closed MRD code in $M_n(\D)$ with minimum distance $n$, we can define an algebra multiplication by choosing an invertible additive map $\phi:\D^n\mapsto \C$ and defining
\[
a\star b := \phi(a)b.
\]
The algebra $\A=(\D^n,\star)$ is then a (not necessarily associative nor unital) division algebra with right nucleus containing $\D$.

The choice of the map $\phi$ will later prove important in our calculations.


\subsection{Nuclei,  Idealisers, and Centralisers}

We recall some important facts regarding division algebras and rank-metric codes, which will later allow us to compute the relevant parameters of our new constructions, and prove inequivalence to the known constructions.

The following definitions were developed in \cite{lunardon2018nuclei} and \cite{liebhold2016automorphism}. These generalise the notion of the nuclei of a division algebra.

\begin{definition} Let $\C$ be a rank-metric code of $M_t(\D)$, with $\D$ a division ring. 
The \textbf{left idealiser} $\lid(\C)$ and the \textbf{right idealiser} $\rid(\C)$ are defined as
\[
\lid(\C) = \{A \in M_t(\D) \colon  A\C\subseteq \C\}
\]
and
\[
\rid(\C) = \{B \in M_t(\D) \colon \C B  \subseteq\C\},
\]
respectively. \\
The \textbf{centraliser} $Cen(\C)$ is defined as
\[
Cen(\C) = \{A \in M_{n}(\D) \colon AX=XA \mbox{ 
for every }X\in \C\}.
\]
The \textbf{centre} $Z(\C)$ of $\C$ is defined as the intersection of the left idealiser and the centraliser.
\[
Z(\C) = \lid(\C)\cap C(\C).
\]
\end{definition}

These objects are related to the nuclei of a division algebra as follows.

\begin{proposition} [see \textnormal{\cite[Proposition 5]{sheekey2020new} and \cite[Theorem 27]{thompson2023division}}] \label{prop:nuc} Let $\A$ be a unital division algebras over $\F$, $\C=\C(\A) \subseteq \End_{\F}(\A)$ be the spread set of $\A$ and $\C^{op}=\C(\A^{op})$ be the spread set of $\A^{op}$. Then the following $\F$-algebra isomorphisms hold:
\[
        \NN_l(\A) \cong \lid(\C), \ \ \ \NN_m(\A) \cong \rid(\C), \ \ \ \NN_r(\A) \cong C(\C^{op}), \ \mbox{ and } \ Z(\A) \cong Z(\C),
        \] 
\end{proposition}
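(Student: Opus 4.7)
The plan is to produce explicit $\F$-algebra isomorphisms by sending each nucleus element to the corresponding left- or right-multiplication endomorphism of $\A$, and to use the identity $L_a(1)=R_a(1)=a$ together with unitality of $\A$ to invert these maps. The four isomorphisms then follow from four parallel verifications, with the centre arising as the intersection of two of the constructions.

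For the left nucleus I would define $\phi_l\colon \NN_l(\A)\to\lid(\C)$ by $a\mapsto L_a$. The defining relation $a\star(b\star c)=(a\star b)\star c$ for $a\in\NN_l(\A)$ reads $L_a\circ L_b=L_{a\star b}$ for every $b\in\A$, which says $L_a\circ\C\subseteq\C$, hence $L_a\in\lid(\C)$; the same identity gives the multiplicativity $\phi_l(a\star a')=\phi_l(a)\phi_l(a')$, and additivity comes from bilinearity of $\star$. Injectivity follows from $L_a(1)=a$. For surjectivity, take $A\in\lid(\C)$: for each $b$ there exists $c_b\in\A$ with $A\circ L_b=L_{c_b}$, and evaluating at $1$ forces $c_b=A(b)$; specialising $b=1$ yields $A=L_{A(1)}$, after which the identity $L_{A(1)}\circ L_b=L_{A(1)\star b}$ is precisely the left-nucleus condition on $A(1)$.

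The middle-nucleus case uses the same assignment $b\mapsto L_b$, but now the condition $a\star(b\star c)=(a\star b)\star c$ for all $a,c$ rewrites as $L_a\circ L_b=L_{a\star b}$ for every $a$, i.e., $\C\circ L_b\subseteq\C$, so $L_b\in\rid(\C)$. For the right nucleus I would use $c\mapsto R_c$: the right-nucleus condition is equivalent to $L_a\circ R_c=R_c\circ L_a$ for every $a$, identifying $R_c$ with an element of the centraliser of $\C^{op}=\C(\A^{op})$ via the natural symmetry $\NN_r(\A)=\NN_l(\A^{op})$, after which the same evaluation-at-$1$ surjectivity argument goes through by applying the left-nucleus result to $\A^{op}$. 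Finally, the centre case follows from $Z(\A)=\NN(\A)\cap\{a:a\star b=b\star a\ \text{for all }b\}$: elements of $Z(\A)$ already lie in $\NN_l(\A)$, so $L_a\in\lid(\C)$; the commutativity clause yields $L_a=R_a$, and the right-nucleus property then places $L_a$ in the centraliser, giving $L_a\in Z(\C)$.

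The main technical obstacle I expect is the right-nucleus step, where the target phrased in terms of $\C^{op}$ must be reconciled with the map constructed from $\A$-multiplications; this amounts to verifying that the natural isomorphism $\NN_l(\A^{op})\cong\lid(\C^{op})$ produced by the left-nucleus argument coincides with the claimed centraliser description, and requires invoking unitality of $\A$ inside $\End_\F(\A)$ to show that an endomorphism commuting with every right-multiplication is itself determined by its value at $1$. A secondary care-point is that the multiplicativity $\phi(a\star a')=\phi(a)\phi(a')$ of each isomorphism depends genuinely on the nucleus hypothesis, so one must apply the hypothesis at the correct slot of each triple product rather than relying on generic associativity.
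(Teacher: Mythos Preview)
The paper does not supply a proof of this proposition; it only cites \cite{sheekey2020new} and \cite{thompson2023division}. So there is nothing in the paper to compare your argument against directly. Your treatment of $\NN_l(\A)\cong\lid(\C)$ and $\NN_m(\A)\cong\rid(\C)$ via $a\mapsto L_a$, with surjectivity obtained by evaluating at the unit, is the standard one and is correct.

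Your right-nucleus step, however, does not establish what is claimed. The identity $L_a\circ R_c=R_c\circ L_a$ for all $a$ says precisely that $R_c$ lies in $C(\C)$, the centraliser of $\C=\{L_a\}$, \emph{not} in $C(\C^{op})$. Separately, applying your left-nucleus result to $\A^{op}$ gives $\NN_r(\A)=\NN_l(\A^{op})\cong\lid(\C^{op})$, which is again not $C(\C^{op})$. In fact a direct computation shows that $C(\C^{op})=\{L_b:b\in\NN_l(\A)\}$: if $B$ commutes with every $R_a$ then $B(c\star a)=B(c)\star a$ for all $a,c$; taking $c=1$ forces $B=L_{B(1)}$, and the remaining condition is exactly $B(1)\in\NN_l(\A)$. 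So the three objects $C(\C)$, $\lid(\C^{op})$ and $C(\C^{op})$ that you are implicitly conflating are genuinely different, and your map $c\mapsto R_c$ lands in the first two but not in the third. Either a different isomorphism is needed, or (as seems likely given how the paper later uses the result) the superscript ${}^{op}$ in the statement is a slip and the intended target is $C(\C)$, which your argument does reach. A further point you omit: $R_cR_{c'}=R_{c'\star c}$, so $c\mapsto R_c$ is an anti-homomorphism on $\NN_r(\A)$; to obtain an $\F$-algebra isomorphism you must pass to the opposite algebra on one side or compose with an anti-automorphism. Your centre argument inherits the same issues through its reliance on the right-nucleus step.
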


These objects can be seen as invariants of codes. For the centre and centraliser to be invariants, we need to assume that the identity is contained in each code. The proof from \cite[Proposition 4]{sheekey2020new} holds verbatim in this setting. 

\begin{proposition}
\label{prop:idequiv}
Suppose $\C$ and $\C'$ are two equivalent codes in $M_n(\D)$. Assuming that $\C'=A\C^{\rho^{-1}}B$, for some $A,B \in \GL_n(\D)$, and $\rho\in \Aut(\D)$, we have
\[
\lid(\C) = (A^{-1}\lid(\C')A)^{\rho}, \mbox{ and }
\rid(\C) = (B\lid(\C')B^{-1})^{\rho}.
\]
Furthermore, if both $\C$ and $\C'$ contain the identity, then 
\[
C(\C) = (A^{-1}C(\C')A)^{\rho}, \mbox{ and }
Z(\C) = (A^{-1}Z(\C')A)^{\rho}.
\]
In particular, if $\D$ is a finite field, then
\[
|\lid(\C)| = |\lid(\C')| \mbox{ and}\quad |\rid(\C)| = |\rid(\C')|\]
and if both $\C$ and $\C'$ contain the identity, then
\[
\quad |C(\C)| = |C(\C')| \mbox{ and} \quad |Z(\C)| = |Z(\C')|.
\]
\end{proposition}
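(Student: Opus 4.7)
The plan is to reduce all four equalities to matrix manipulations starting from a single reformulation of the hypothesis. Since $\rho$ is a ring automorphism of $\D$, its entrywise action satisfies $(MN)^\rho = M^\rho N^\rho$ and $(M^{-1})^\rho = (M^\rho)^{-1}$. Applied to $\C' = A\C^{\rho^{-1}} B$, this gives
\[
\C \;=\; (A^{-1}\C' B^{-1})^\rho \;=\; (A^{-1})^\rho (\C')^\rho (B^{-1})^\rho,
\]
which is the sole structural input for everything that follows.

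For $\lid$, I would start from $X \in \lid(\C) \iff X\C \subseteq \C$, substitute the identity above, and multiply by $A^\rho$ on the left and $B^\rho$ on the right so that the $B^{\pm\rho}$ factors cancel on both sides. What remains is $A^\rho X (A^{-1})^\rho \cdot (\C')^\rho \subseteq (\C')^\rho$; applying $\rho^{-1}$ entrywise says exactly that $(A^\rho X (A^{-1})^\rho)^{\rho^{-1}} \in \lid(\C')$, and solving for $X$ produces $X \in (A^{-1}\lid(\C') A)^\rho$. The argument for $\rid(\C)$ is parallel with the sides swapped: the $A$-factors cancel instead, and conjugation by $B$ survives, giving $\rid(\C) = (B\rid(\C') B^{-1})^\rho$ (correcting the apparent typo in the statement).

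The main obstacle is the centraliser, because the claimed formula conjugates only by $A$ while the equivalence involves both $A$ and $B$. Setting $Y_1 := A^\rho Y (A^{-1})^\rho$, the condition $YC = CY$ for every $C \in \C$ rewrites, after the same two multiplications, as
\[
Y_1 (C')^\rho \;=\; (C')^\rho \, ((AB)^{-1})^\rho\, Y_1\, (AB)^\rho \qquad \text{for all } C' \in \C'.
\]
Both identity hypotheses are exactly what is needed to absorb the extraneous factor $(AB)^{\pm\rho}$. Since $I \in \C$, one has $A I^{\rho^{-1}} B = AB \in \C'$, i.e.\ $(AB)^\rho \in (\C')^\rho$. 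Since $I \in \C'$, specialising the displayed equation to $C' = I$ yields $(AB)^\rho Y_1 = Y_1 (AB)^\rho$. Once $Y_1$ commutes with $(AB)^\rho$, the cross term collapses and the displayed condition reduces to $Y_1 \in C((\C')^\rho) = (C(\C'))^\rho$; unwinding the definition of $Y_1$ gives $Y \in (A^{-1} C(\C') A)^\rho$. The reverse implication uses $AB \in \C'$ to deduce the commutation of $Y_1$ with $(AB)^\rho$ automatically from $Y_1 \in (C(\C'))^\rho$, reversing the chain of equivalences.

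The centre claim is then a formal consequence of $Z(\C) = \lid(\C) \cap C(\C)$: the map $W \mapsto (A^{-1}WA)^\rho$ is a bijection of $M_n(\D)$, so it commutes with intersection and carries $Z(\C') = \lid(\C')\cap C(\C')$ to $Z(\C)$. Finally, the cardinality statements for finite $\D$ are immediate, as the entrywise action of $\rho$ and conjugation by an invertible matrix are both bijections on $M_n(\D)$, hence preserve cardinalities of every subset they move.
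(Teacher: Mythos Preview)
Your argument is correct and complete. The paper does not give its own proof of this proposition; it simply states that ``the proof from \cite[Proposition 4]{sheekey2020new} holds verbatim in this setting,'' and your direct computation by substituting $\C=(A^{-1})^\rho(\C')^\rho(B^{-1})^\rho$ and conjugating appropriately is precisely that standard verification. Your observation that both identity hypotheses are needed to kill the extra $(AB)^{\pm\rho}$ factor in the centraliser case (one to put $AB$ into $\C'$, the other to specialise $C'=I$) is exactly the point, and your note that the statement contains a typo (it should read $\rid(\C)=(B\,\rid(\C')\,B^{-1})^\rho$) is also correct.
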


\begin{definition}
Suppose $\F$ is a finite field. The \textbf{nuclear parameters} of a code $\C\subset M_n(\F)$ containing the identity are the tuple
\[
(|\C|,|\lid(\C)|,|\lid(\C)|,|C(\C)|,|Z(\C)|).
\]
If a code $\C$ contains an invertible element but does not contain the identity, we define its  nuclear parameters to be the nuclear parameters of any code $\C'$ which contains the identity and is equivalent to $\C$.
\end{definition}

\subsection{Known constructions and structural results}

In this section we recall the known constructions of division algebras and MRD codes from skew polynomial rings, and their structural properties and parameters. Subsequently, we will improve these results using the theory and examples developed in Sections \ref{sec:Javier} and \ref{sec:mzlm}.
\smallskip

\subsubsection{Petit Algebras} 
\medskip

Let $f$ be an irreducible element of $R$ of degree $h$. Consider the right $R$-module $R/Rf$. Hence the elements of $R/Rf$ can be identified with the elements of $R$ having degree at most $h-1$. Define a unital division algebra whose elements are the elements of $R/Rf$, with multiplication defined by
\[
a \circ b := ab \ \modr \ f.
\]
This is well defined, and non-zero for $a,b\ne 0$, due to the fact that $R$ is a right-Euclidean domain. This implies that $(R/Rf,\circ)$ is a unital division algebras, known as a \emph{Petit algebra}, which we also denote by $S_f$.  For instance, the Petit Algebra $(R/Rf,\circ)$, with $f(x)=x^2-i \in \mathbb{C}[x;\overline{\phantom{a}}]$, where $\overline{\phantom{a}}$ is the complex conjugation, is one of the first constructions of a nonassociative division algebra \cite{dickson1906linear}. Clearly, we have $(R/Rf,\circ)=(R/R{\alpha f},\circ)$, for any $\alpha \in \LL^*$, so we can assume that $f$ is monic. The algebra $(R/Rf,\circ)$ is associative if and only if $f \in Z(R)$. In \cite{petit1966certains}, the nuclei and the centre of $(R/Rf,\circ)$ are determined as follows. Recall the definition of the eigenring $\mathcal{E}(f)$ from Definition \ref{def:eigenring}.

\begin{theorem}
    Let $f \in R$ be an irreducible polynomial of degree $h \geq 2$, with $f \notin Z(R)$. Consider the Petit algebra $S_f$. Then
   \[
        \NN_l(S_f)=\LL, \ \ \  
        \NN_m(S_f)=\LL,  \ \ \  
        \NN_r(S_f) =\mathcal{E}(f),  \ \mbox{ and } \ 
        Z(S_f)=\K.
    \]
 
\end{theorem}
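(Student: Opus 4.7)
The plan is to reduce each nucleus condition to a single computation with the right-division algorithm on $R$. For any $a, b, c \in R/Rf$, write $ab = q_2 f + (a \circ b)$ with $\deg(a \circ b) < h$, where $q_2 = q_2(a,b)$. Since the left ideal $Rf$ absorbs left multiplication, one obtains $a \circ (b \circ c) = abc \modr f$, whereas $(a \circ b) \circ c = (ab - q_2 f)c \modr f = abc \modr f - (q_2 f c \modr f)$. Thus associativity at a triple $(a,b,c)$ is equivalent to the single condition $q_2 f c \in Rf$. I would do this computation once and then invoke it in each of the three nucleus cases.

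For the right nucleus, the condition $c \in \NN_r(S_f)$ reads $q_2 f c \in Rf$ for all $a, b$. If $c \in I(f)$, then $fc \in Rf$ and $q_2 f c \in q_2 \cdot Rf \subseteq Rf$, so $c + Rf \in \NN_r(S_f)$. Conversely, choosing $a = x$ and $b = x^{h-1}$ yields $ab = x^h$ with right-quotient $q_2 = 1$, whence the condition reduces to $fc \in Rf$, i.e., $c \in I(f)$. Thus $\NN_r(S_f) = I(f)/Rf = \mathcal{E}(f)$.

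For the left nucleus, any $a \in \LL$ satisfies $\deg(ab) = \deg(b) < h$, forcing $q_2 = 0$, so $\LL \subseteq \NN_l(S_f)$. For the reverse inclusion, suppose $a$ has $d := \deg(a) \geq 1$ with leading coefficient $a_d$. Taking $b := (a_d^{-1})^{\sigma^{-d}} x^{h-d}$ (of degree $h-d < h$) makes $ab$ monic of degree exactly $h$, so $q_2 = 1$. The condition $fc \in Rf$ for all $c$ then amounts to $fR \subseteq Rf$, i.e., $f$ is two-sided. However, by the classification of two-sided elements \cite[Theorem 1.1.22]{jacobson2009finite}, a monic irreducible two-sided element of $R$ of degree at least $2$ must lie in $\K[x^n] = Z(R)$, contradicting $f \notin Z(R)$. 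The proof of $\NN_m(S_f) = \LL$ is symmetric: for fixed $b$ of degree $e \geq 1$, the choice $a := (b_e^{-1})^{\sigma^{h-e}} x^{h-e}$ again forces $q_2 = 1$ and the same contradiction ensues.

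Finally, $\NN(S_f) = \NN_l(S_f) \cap \NN_m(S_f) \cap \NN_r(S_f) = \LL \cap \mathcal{E}(f)$. Any $z$ in the centre additionally satisfies $z \circ x = x \circ z$, and since both $zx$ and $xz = z^\sigma x$ already have degree $1 < h$, no reduction is required and the equality forces $z = z^\sigma$, that is $z \in \K$. Conversely, elements of $\K$ lie in $Z(R)$ and hence commute with everything in $R$, so $\K \subseteq \mathcal{E}(f)$ and $\K$ centralises $S_f$; thus $Z(S_f) = \K$. The main obstacle is producing the elements $b$ (respectively $a$) that force $q_2 = 1$; once this is achieved, the classification of two-sided elements provides the clean contradiction in each nucleus case.
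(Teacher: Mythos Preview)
The paper does not give its own proof of this theorem; it is quoted from Petit \cite{petit1966certains} and stated without argument. Your proof is correct and self-contained, and the reduction you use---that the associator at $(a,b,c)$ vanishes precisely when $q_2(a,b)\,fc\in Rf$---is the classical computation behind Petit's result. Your choices of test elements to force $q_2=1$ are the right ones, and the right-nucleus and centre arguments are clean.

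One step deserves an extra line: from ``$fc\in Rf$ for all $c$ of degree $<h$'' you pass to ``$fR\subseteq Rf$, i.e.\ $f$ is two-sided''. The inclusion $fR\subseteq Rf$ follows because any $g\in R$ can be written $g=qf+r$ with $\deg r<h$, and $fqf\in Rf$ trivially while $fr\in Rf$ by hypothesis. That $fR\subseteq Rf$ actually forces $fR=Rf$ (so that the classification of two-sided elements applies to $f$ itself) uses that $Rf$ is then a two-sided ideal, hence equals some $bR$; comparing degrees gives $b=df$ with $d\in\LL^*$ and $fR=bR=Rf$. With this small addition, the invocation of \cite[Theorem~1.1.22]{jacobson2009finite} and the hypothesis $f\notin Z(R)$ closes both the left- and middle-nucleus cases exactly as you wrote.
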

When $\sigma \neq id$ and $h=1$, the equality between the eigenring of $f$ and $\NN_r(S_f)$ does not hold.
Indeed, $R/Rf$ is isomorphic to $\LL$, and so $\NN_r(S_f)=\LL$. In this case $\mathcal{E}(f)=\{b \in \LL \colon \sigma(b)=b\}=\Fix{\LL}{\sigma}$.

We can describe a \emph{canonical} isomorphism giving \eqref{eq:artin} as follows. Let $F(y) \in \K[y]$ be a monic irreducible polynomial having degree $s \geq 2$ and let $m$ be the number of irreducible factors of $F(x^n)$ in $R$. Let $f(x) \in R$ be a monic irreducible divisor of $F(x^n)$ in $R$. Note that $f^*(x)=F(x^n)$ and $\deg(f)=\ell_F s$, where $\ell_F=n/m$. Since $\N_r(S_f)=\mathcal{E}(f)$, $R/Rf$ can be regarded as a right $\mathcal{E}(f)$-vector space. Moreover, \[n\ell_F=\dim_{\K}(R/Rf)=[R/Rf:\NN_r(S_f)][\NN_r(S_f):\K]=[R/Rf:\NN_r(S_f)]sn\ell_F^2,\]
by Theorem \ref{th:isomorphismtheoremeigen}, so $R/Rf$ having dimension $m$ over $\mathcal{E}(f)$. Now, for any $a(x) \in R_F$, we have that the left multiplication by $a(x)$ in $R/Rf$ is such that 
\[
\begin{array}{rlr} 
L_{a(x)}(b(x))\alpha & =(a(x)b(x)\ \mod_r f(x))\alpha \ \mod_r f(x) &\\
& =(a(x)b(x) - u(x)f(x))\alpha \ \mod_r f(x) & \hskip 1 cm \mbox{(for some }u(x) \in R)\\
& =a(x)b(x)\alpha - u(x)f(x)\alpha \ \mod_r f(x) & \\
& =a(x)b(x)\alpha \ \mod_r f(x)  & \mbox{since } \alpha \in \mathcal{E}(f)\\
& =a(x)(b(x) \alpha) \\
&=L_{a(x)}(b(x)\alpha),
\end{array}
\]
for any $b(x)\in R/Rf$ and $\alpha \in \mathcal{E}(f)$. This means that $L_{a(x)} \in \End_{\mathcal{E}(f)}(R/Rf)$, or in other words, the left multiplications by the elements of $R/RF(x^n)$ on the elements of $R/Rf$ are $\mathcal{E}(f)$-linear. So, we can consider the following map
\begin{equation} \label{eq:canonicalisomorphism}
\begin{array}{lcrc}
     L: & \frac{R}{RF(x^n)} & \longrightarrow & \End_{\mathcal{E}(f)}(R/Rf) \\
          & a(x) + RF(x^n) & \longmapsto & L_a:b(x) + Rf \in \frac{R}{Rf}  \mapsto a(x)b(x)+Rf \in \frac{R}{Rf} 
\end{array} 
\end{equation}
It is easy to check that $L$ is well defined and comparing the dimension of $R_F$ and $\End_{\mathcal{E}(f)}(R/Rf)$, we get that $L$ establishes the following $\mathcal{E}(f)$-isomorphisms:
\[
\frac{R}{RF(x^n)} \cong \End_{\mathcal{E}(f)}(R/Rf) \cong \End_{\mathcal{E}(f)}(\mathcal{E}(f)^m) \cong M_m(\mathcal{E}(f)),
\]
where the last two isomorphisms are defined up to a choice of an $\mathcal{E}(f)$-basis of $R/Rf$, cf. \cite[p. 13]{sheekey2020new} and \cite[p. 7]{thompson2023division}. From now on we will abuse notation and omit the map $L$, and identify $a$ and $L_a$ where convenient.

Let $\C$ be a $\K'$-linear subset of $R/RF(x^n) \cong M_m(\mathcal{E}(f))$, with $\K' \leq \K$ such that $\dim_{\K'}(\C)=m[\mathcal{E}(f):\K']$, i.e. $\dim_{\K'}(\C)=m\ell^2s[\K:\K']$, with $\ell=\ell_F=n/m$. Assume that $\C$ is an MRD code in $M_m(\mathcal{E}(f))$, i.e. $d(\C)=m$.  By (i) of Proposition \ref{th:basicskewpolynomial}, we know that $\deg(f)=s\ell$ and so $R/Rf$ is a $\K'$-vector space having dimension $m\ell^2s[\K:\K']=\dim_{\K'}(\C)$. This means that there exists a $\K'$-linear isomorphism $a \in R/Rf \rightarrow \phi(a) \in \C \subseteq \End_{\mathcal{E}(f)}(R/Rf)$, where we are using the isomorphism $\frac{R}{RF(x^n)} \cong \End_{\mathcal{E}(f)}(R_f)$, by \eqref{eq:canonicalisomorphism}. The following holds.
\begin{lemma} \label{lem:fromcodestoalgebras}
    In $R/Rf$, the operation defined as 
\begin{equation} \label{eq:isomophismfromMRDtodivision}
a \star_{\phi} b:=\phi(a)(b),
\end{equation}
for any $a,b \in R/Rf$ defines a division algebras over $\K'$ having dimension $m[\mathcal{E}(f):\K']=m\ell^2s[\K:\K']$.
\end{lemma}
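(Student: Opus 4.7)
The plan is to verify three properties of the pair $(R/Rf,\star_\phi)$: that $\star_\phi$ is $\K'$-bilinear, that it has no zero divisors, and that the resulting $\K'$-algebra has the claimed dimension $m[\mathcal{E}(f):\K']$. Since $R/Rf$ is finite-dimensional over $\K'$, the absence of zero divisors will automatically upgrade the structure to a division algebra.

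First I would check bilinearity and dimension, which are the routine parts. The map $\phi\colon R/Rf\to\C$ is $\K'$-linear by hypothesis, and each $\phi(a)\in \C\subseteq \End_{\mathcal{E}(f)}(R/Rf)$ is in particular $\K'$-linear in its argument; combining these two statements gives $\K'$-bilinearity of $(a,b)\mapsto \phi(a)(b)$ by a direct verification on each variable. For the dimension, since $\phi$ is a $\K'$-linear isomorphism,
\[
\dim_{\K'}(R/Rf)=\dim_{\K'}(\C)=m[\mathcal{E}(f):\K'],
\]
and this is internally consistent with $\deg(f)=s\ell$, $[\LL:\K]=n$, $m\ell=n$ and $[\mathcal{E}(f):\K]=s\ell^2$.

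The substantive step, and the main place where the hypotheses are used, is the absence of zero divisors. Suppose $a\star_\phi b=\phi(a)(b)=0$ in $R/Rf$. If $a\neq 0$, then since $\phi$ is injective we have $\phi(a)\neq 0$. The assumption that $\C\subseteq M_m(\mathcal{E}(f))$ is MRD with minimum distance $d=m$ means that every nonzero element of $\C$ has rank equal to $m$; via the identification in \eqref{eq:canonicalisomorphism}, $\phi(a)$ is then an $\mathcal{E}(f)$-linear endomorphism of full rank of the $\mathcal{E}(f)$-vector space $R/Rf$, hence invertible. Therefore $\phi(a)(b)=0$ forces $b=0$, showing that $(R/Rf,\star_\phi)$ has no zero divisors and is a division $\K'$-algebra of the stated dimension. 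The only nontrivial input is the MRD property feeding into invertibility; everything else is bookkeeping around the canonical identification $R/RF(x^n)\cong \End_{\mathcal{E}(f)}(R/Rf)$ and the linearity of $\phi$.
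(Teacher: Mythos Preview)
Your argument is correct and follows exactly the same approach as the paper, which simply notes that the assertion ``easily follows by the fact that every non zero element of $\C$ is invertible.'' You have merely spelled out the bilinearity, dimension count, and no-zero-divisor verification that the paper leaves implicit.
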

\begin{proof}
    The assertion easily follows by the the fact that every non zero element of $\C$ is invertible.
\end{proof}

\smallskip

\subsubsection{Previous constructions of MRD codes over finite fields} 
\medskip

\label{ssec:knownmrd}

Let $R=\F_{q^n}[x;\sigma]$ and $T(y):=y-1$. We recall the constructions of inequivalent linear MRD codes in $M_n(\F_q) \cong \frac{R}{RT(x^n)}$:
\begin{enumerate}[I)]
    \item the first construction of linear MRD codes was provided by Delsarte in \cite{delsarte1978bilinear} and later rediscovered by Gabidulin in \cite{gabidulin1985theory} and extended by Kshevetskiy and Gabidulin in \cite{Gabidulins}. The codes of this family are now known as \textbf{generalized Gabidulin codes} and are defined as follows:
    \[
    \{a_0+a_1x+\ldots+a_{k-1}x^{k-1}\colon a_i \in \F_{q^n}\} \subseteq \frac{R}{RT(x^n)} \cong M_n(\F_q)
    \]
 A code belonging to this family as left and right idealisers isomorphic to $\F_{q^n}$.
\item Later, the family of Gabidulin codes was extended by Sheekey to the family of \textbf{twisted Gabidulin codes} and then by Lunardon, Trombetti and Zhou in \cite{lunardon2018generalized}. A further generalization, was constructed in \cite{otal2016additive}. Such codes are known as \textbf{Additive, generalized Twisted Gabidulin (AGTG) codes} and this family properly contains the codes constructed by Sheekey, and by Lunardon et al.. Precisely, an AGTG code is defined as 
\[\{a_0+a_1x+\ldots+a_{k-1}x^{k-1}+\tau(a_0)\eta x^k \colon a_i \in \F_{q^n}\}\subseteq \frac{R}{RT(x^n)} \cong M_n(\F_q)  \]
where $\tau \in \Aut(\F_{q^n})$ and $\eta \in \F_{q^n}$ is such that $\N_{q^n/q'}(\eta)N_{q/q'}((-1)^{kn})\neq 1$, with $\F_{q'}=\Fix{\F_q}{\tau}$. Such code is an MRD code in $M_n(\F_q)$ linear over the field $\F_{q'}$.
An AGTG code has parameters 
\begin{equation} \label{eq:parAGTGcodes}
  (p^{nke}, p^{(ne,j)}
, p^{(ne,ke-j)}
, p^e, p^{(e,j)}),  
\end{equation}
where $q=p^e$ and $\tau(x)=x^{p^j}$.

\item Other families of MRD codes are Trombetti-Zhou codes \cite{trombetti2018new} which are related to the Hughes–Kleinfeld semifields. More precisely, let $n=2t \geq 4$. For a positive integer $k\leq n-1$, a \textbf{Trombetti-Zhou code} is defined as
\[\{a_0'+a_1x+\ldots+a_{k-1}x^{k-1}+\gamma a_0'' x^k \colon a_i \in \F_{q^n},a_0',a_0'' \in \F_{q^t}\} \subseteq \frac{\F_{q^n}[x;\sigma]}{(x^n-1)} \cong M_n(\F_q) \]
where $\gamma$ is an element of $\F_{q^n}$ such that $\N_{q^n/q}(\gamma)$ is not a square in $\F_q$. A code of this family has parameters $(q^{nk},q^{n/2},q^{n/2},q,q)$.

    \item In \cite{csajbok2020mrd}, the authors introduce a new class of linear MRD codes:
\[ \{ a_0+a_1x+a_2x^3: a_0,a_1,a_2  \in \F_{q^n}\}\subseteq \frac{\F_{q^n}[x;\sigma]}{(x^n-1)} \cong M_n(\F_q) \]
for $n = 7$ and $q$ odd (cf. Theorem 3.3 and Corollary 3.4), and for $n = 8$, $q \equiv 1 \pmod{3}$ (cf. Theorem 3.5 and Corollary 3.6). Such codes have left and right idealisers isomorphic to $\fqn$.

    \item Other MRD codes are of the form \[\{a_0+a_1f(x):a_0,a_1 \in \F_{q^n}\}\subseteq \frac{\F_{q^n}[x;\sigma]}{(x^n-1)} \cong M_n(\F_q),\] where $f(x)$ belongs to a special class of polynomial, called \emph{scattered polynomials}. See \cite{Bartoli2020newfamily,longobardi2023large,longobardi2021linear,neri2022extending,zanella2020vertex,csajbok2018anewfamily,bartoli2021conjecture,polverino2020number,csajbok2018new,marino2020mrd,bartoli2023scattered,smaldore2024new} for the known construction. Such codes have their left idealiser isomorphic to $\F_{q^n}$.

\end{enumerate}

\subsubsection{Recent constructions of division algebras and MRD codes from skew polynomial rings}
\label{ssec:sheekey}
\medskip

Using the setting described in Section \ref{sec:skewframe}, let $\rho \in \Aut(\LL)$ and define $\K'=\Fix{\LL}{\rho} \cap \K$ and we assume that $\K/\K'$ is a finite field extension; the following family
\begin{equation} \label{eq:sheekeyconstructionMRDinfinite}
    S_{n,s,k}(\eta,\rho,F):=\{a_0+a_1x+\ldots+a_{ks-1}x^{ks-1}+\eta a_0^{\rho}x^{ks} +RF(x^n) \colon a_i \in \LL  \} \subseteq \frac{R}{RF(x^n)},
    \end{equation}
    where $\eta \in \LL$, was investigated, in \cite{sheekey2020new} for the finite field case and in \cite{thompson2023division} for infinite field case, but with the assumption that $n=m$, or equivalently that $\ell_F=1$. More precisely, the following MRD codes in $M_n(E_F)$ where introduced.

\begin{theorem}  [see \textnormal{\cite[Theorem 7]{sheekey2020new} and \cite[Theorem 22, Theorem 24]{thompson2023division}}]  \label{th:newsheekey}
    Let $k < n$ be a positive integer, and assume that $\ell_F=1$. Then the set $S_{n,s,k}(\eta,\rho,F)$ defines a $\K'$-linear MRD code in $R_F \cong M_n(E_F)$ having minimum distance $n-k+1$, for any $\eta \in \LL$ such that $\N_{\LL/\K'}(\eta) \N_{\K/\K'}((-1)^{sk(n-1)}F_0^k) \neq 1$. 
\end{theorem}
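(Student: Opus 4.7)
The plan is to verify three ingredients: (i) $S:=S_{n,s,k}(\eta,\rho,F)$ is a $\K'$-linear subspace of $R_F$; (ii) its $\K'$-dimension saturates the Singleton-like bound \eqref{eq:singletonbound} for the target distance $d=n-k+1$; and (iii) every nonzero element of $S$ has rank at least $n-k+1$, via a two-case argument based on Theorem \ref{th:boundrankinfinite}.

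First I would check $\K'$-linearity. For $\lambda\in\K'=\Fix{\LL}{\rho}\cap\K$ we have $\lambda^\rho=\lambda$ and $\lambda$ commutes with $x$ (since $\lambda\in\K=\Fix{\LL}{\sigma}$), so left-multiplying the representative $a_0+a_1x+\cdots+\eta a_0^\rho x^{ks}$ by $\lambda$ yields $(\lambda a_0)+\cdots+\eta(\lambda a_0)^\rho x^{ks}$, which is again in $S$. The free parameters are $(a_0,\ldots,a_{ks-1})\in\LL^{ks}$, giving
\[
\dim_{\K'}S=ks\cdot[\LL:\K']=ksn[\K:\K'].
\]
Since $\ell_F=1$ forces $[E_F:\K]=s$ and thus $[E_F:\K']=s[\K:\K']$, the Singleton-like bound \eqref{eq:singletonbound} in $M_n(E_F)$ for minimum distance $d=n-k+1$ is exactly $nks[\K:\K']$. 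So once a rank lower bound of $n-k+1$ is proved, $S$ is automatically MRD of the claimed distance.

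Second, I would bound the minimum distance by a case analysis on a nonzero $a(x)\in S$. If $a_0=0$ (or $\eta=0$), then $\deg(a(x))\leq ks-1$; since $\ell_F=1$ together with Proposition \ref{th:basicskewpolynomial}(i) forces $\deg(\gcrd(a(x),F(x^n)))$ to be a multiple of $s$, this degree is at most $s(k-1)$, and Theorem \ref{th:rankpolynomial} gives $\rk(a(x))\geq n-(k-1)=n-k+1$. If instead $a_0\neq 0$ and $\eta\neq 0$, then $a(x)$ has degree exactly $ks$ with leading coefficient $a_{ks}=\eta a_0^\rho\neq 0$; Theorem \ref{th:boundrankinfinite} yields $\rk(a(x))\geq n-k$, and equality would force
\[
\N_{\LL/\K}\!\left(\frac{a_0}{\eta a_0^\rho}\right)=(-1)^{sk(n-1)}F_0^k.
\]

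The main obstacle, and the reason the hypothesis is phrased via the composite norm $\N_{\LL/\K'}$, is to convert this identity into a contradiction. I would apply $\N_{\K/\K'}$ to both sides, use transitivity $\N_{\K/\K'}\circ\N_{\LL/\K}=\N_{\LL/\K'}$ together with multiplicativity, and then invoke $\N_{\LL/\K'}(a_0^\rho)=\N_{\LL/\K'}(a_0)$. This last equality does not require $\LL/\K'$ to be Galois: since $\rho$ restricts to a $\K'$-automorphism of $\LL$, the multiplication map by $a_0^\rho$ equals $\rho\circ m_{a_0}\circ \rho^{-1}$ as a $\K'$-endomorphism of $\LL$, hence has the same determinant. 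Cancelling $\N_{\LL/\K'}(a_0)\neq 0$ from both sides produces $\N_{\LL/\K'}(\eta)\N_{\K/\K'}((-1)^{sk(n-1)}F_0^k)=1$, contradicting the hypothesis on $\eta$. Thus $\rk(a(x))\geq n-k+1$ in every case, and combined with the matched dimension count this establishes the MRD property.
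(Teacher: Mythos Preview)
Your proof is correct and follows essentially the same route as the paper. Note that the paper does not prove Theorem~\ref{th:newsheekey} directly (it is quoted from prior work), but it proves the generalisation Theorem~\ref{th:newMRDl>1}, whose argument specialised to $\ell_F=1$ is exactly yours: match the Singleton bound on the dimension side, then use Theorem~\ref{th:boundrankinfinite} in two cases, and in the $a_0\neq 0$ case push the norm identity down to $\K'$ to reach a contradiction. Your explicit determinant justification of $\N_{\LL/\K'}(a_0^\rho)=\N_{\LL/\K'}(a_0)$ is a nice addition that the paper leaves implicit.
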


Over finite fields, the family $S_{n,s,k}(\eta,\rho,F)$ incorporates the generalized Gabidulin codes and AGTG code. Indeed, the former are of form $S_{n,1,k}(0,0,y-1)$ and the latter of the form $S_{n,1,k}(\eta,\tau,y-1)$.

As we see, the case $\ell_F \neq 1$, (i.e. $n \neq m$) is not considered. In particular, we do not obtain any constructions for the situation of Section \ref{sec:Javier}. In Section \ref{sec:extendingSheekey} we will rectify this.

\vspace{0.3cm}

In the finite field case, the family $S_{n,s,k}(\eta,\rho,F)$ has the following parameters. See Remark \ref{rem:error} for a comment on this result.

\begin{theorem} [see \textnormal{
\cite[Theorem 9]{sheekey2020new}}] \label{th:parconstrsheekey}
Let $q=p^e$, for some prime $p$. Assume that $k \leq n/2$ and $sk>1$. Let $\C=S_{n,s,k}(\eta,\rho,F) \subseteq M_n(\F_{q^s})$ defined as in \eqref{eq:sheekeyconstructionMRDinfinite}. Assume that $y^{\rho}=y^{p^h}$ and $y^{\sigma}=y^{p^{ej}}$ for any $y \in \fqn$, with $(j,n)=1$. Let $\C'$ be any code equivalent to $\C$ containing the identity. If $\eta \neq 0$, then 
    \[
        \lid(\C')  \cong \F_{p^{(ne,h)}}, \ \ \
        \rid(\C') \cong  \F_{p^{(ne,ske-h)}}, \  \ \
         \cen(\C')  \cong  \F_{p^{se}}, \ \mbox{ and } \
        Z(\C') \cong  \F_{p^{(e,h)}}
  \]
If $\eta = 0$, then $S_{n,s,k}(0,\rho,F)=S_{n,s,k}(0,0,F)$ for all $\rho$, and 
\[
        \lid(\C') \cong \F_{p^{ne}}, \ \ \
        \rid(\C')  \cong  \F_{p^{ne}}, \ \ \
         \cen(\C')  \cong  \F_{p^{se}}, \ \mbox{ and } \
        Z(\C') \cong  \F_{p^e}.
    \]
\end{theorem}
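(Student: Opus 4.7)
The plan is to compute each of the four subalgebras directly inside the skew polynomial model $R/RF(x^n) \cong M_n(\F_{q^s})$, following the strategy of \cite[Theorem 9]{sheekey2020new}. Since $\C$ is MRD with minimum distance $n - k + 1 \le n$ it contains an invertible element, so by \Cref{prop:idequiv} we may replace $\C$ by an equivalent $\C'$ containing the identity without altering any of the four cardinalities; it therefore suffices to compute the four quantities for a convenient representative. The hypothesis $k \le n/2$ is essential for the bookkeeping, since it ensures that generic products of elements of degree at most $ks$ have degree at most $2ks \le sn-1$, so no reduction modulo $F(x^n)$ intervenes in the key computations. The case $\eta = 0$ reduces to the generalised Gabidulin code, whose parameters are well known and are recovered by the argument below with the twist conditions dropped.

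For the right idealiser, with $\eta \ne 0$, I take the element $c = 1 + \eta x^{ks} \in \C$ and a general $b = \sum_i b_i x^i \in R/RF(x^n)$, and expand
\[
cb \;=\; \sum_{i} b_i x^i \;+\; \eta \sum_{i} b_i^{\sigma^{ks}} x^{i+ks} \pmod{F(x^n)}.
\]
Requiring $cb \in \C$ forces the coefficient of $x^j$ to vanish for $ks < j < sn$ and the coefficient of $x^{ks}$ to equal $\eta \cdot b_0^{\rho}$. Combined with the analogous constraints obtained from $c = x^i \in \C$ for $1 \le i \le ks-1$ (which exist because $sk > 1$), these relations collapse $b$ to a constant $b_0 \in \LL$ satisfying $b_0^{\sigma^{ks}} = b_0^{\rho}$. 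Hence $\rid(\C)$ is the subfield of $\LL$ fixed by $\rho^{-1}\sigma^{ks}$, of order $p^{(ne,\,esk-h)}$ (under the canonical choice $j=1$ of $\sigma$). The left idealiser is obtained symmetrically: expanding $bc$ with the same $c$'s, the degree constraints pin $b$ to a constant $b_0 \in \LL$ and the twist forces $b_0 \eta = \eta b_0^{\rho}$, giving $\lid(\C) \cong \Fix{\LL}{\rho} \cong \F_{p^{(ne,h)}}$.

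For the centraliser, the inclusion $E_F \cong \F_{q^s} \subseteq C(\C)$ is automatic since $E_F$ is the centre of $M_n(\F_{q^s})$. For the reverse inclusion I would use that $\C$ contains $\LL \cdot x$ (invoking $sk>1$ so that $x \in \C$); a short calculation shows that an element $b = \sum b_j x^j$ commutes with every $ax$, $a \in \LL$, iff $n \mid j$ whenever $b_j \ne 0$ and $b_j \in \F_q$, so the centraliser of $\LL \cdot x$ inside $R/RF(x^n)$ is exactly $\F_q[x^n]/(F(x^n)) \cong E_F$. The centre is then the intersection $Z(\C') = \lid(\C') \cap C(\C') \cong \F_{p^{(ne,h)}} \cap \F_{p^{se}} = \F_{p^{(e,h)}}$, using the tower $\F_p \subseteq \F_{p^{(e,h)}} \subseteq \F_{p^e} \subseteq \F_{p^{se}}$ and that $(ne,h)$ is a multiple of $(e,h)$. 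The main obstacle is the combinatorial bookkeeping in the $\eta\ne 0$ case, specifically verifying that the high-degree coefficients $b_i$ (those for which $i + ks$ does wrap around modulo $F(x^n)$) are still killed by the full system of constraints rather than surviving as ``hidden'' idealiser elements. A secondary subtlety, acknowledged in \Cref{rem:error}, is that for general $\sigma : y \mapsto y^{p^{ej}}$ with $j \ne 1$, the correct exponent inside the gcd for $\rid$ should be $ejks - h$ rather than $esk - h$.
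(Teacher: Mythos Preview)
Your approach is essentially the same as the paper's: the paper does not prove this theorem directly (it is cited from \cite{sheekey2020new}), but it does prove the generalisation in \Cref{prop:idealiserl>1} by exactly this strategy---multiplying a putative idealiser element by low-degree monomials $x,x^2,\ldots,x^{sk\ell-1}\in\C$ to force it down to a constant, then reading off the twist condition, and finally identifying the centraliser with $E_F$ by commuting against $\LL$ and $x$. The only cosmetic differences are that the paper treats the left idealiser first and uses $gx$ and $gx^2$ (rather than your $1+\eta x^{ks}$) to kill the top coefficient $g_{ns-1}$, which is precisely the ``wraparound'' bookkeeping you flag as the main obstacle; your observation about the exponent $ejks-h$ for general $j$ is also consistent with the form $\Fix{\LL}{\rho^{-1}\circ\sigma^{sk}}$ obtained there.
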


Using the above result, it was proven that $S_{n,s,k}(\eta,\rho,F)$ contains new MRD codes for $1<k \leq n/2$, for all $n,s$ with $\gcd(n,s)>1$ and $\gcd(n,s)$ does not divide $e$, where $q=p^e$, see \cite[Theorem 11]{sheekey2020new}. \\

\section{New constructions of division algebras and MRD codes for arbitrary $\ell_F$} \label{sec:extendingSheekey}

In this section, we extend the construction from \cite{sheekey2020new},\cite{thompson2023division} 
 in Equation (\ref{eq:sheekeyconstructionMRDinfinite}) to division algebras and MRD codes for any irreducible polynomial $F(y) \in \K[y]$; in particular we do not assume that $\ell_F=1$. Clearly, when $m \neq n$, we have $R/RF(x^n) \cong M_m(\mathcal{E}(f))$. When $m=1$, we get that $R/RF(x^n) \cong S_f \cong \NN_r(S_f)$. In order to construct optimal subsets of matrices in $M_m(\mathcal{E}(f))$, we need that $m>1$. Clearly $m=1$ corresponds to the case where $F(x^n)$ is irreducible, or equivalently that a monic irreducible factors is in $Z(R)$, and so we assume throughout that this does not occur.

\medskip

\subsection{Our first family of new MRD codes and division algebras} \label{sec:firstfamilyextended}

We are now able to extend the  family of MRD codes constructed in \Cref{th:newsheekey} over any cyclic Galois extension $\LL/\K$ of degree $n$ for any irreducible monic polynomial $F(y) \in \K[y]$. We consider the setting described in Section \ref{sec:skewframe}, and we let $\rho \in \Aut(\LL)$, define $\K'=\Fix{\LL}{\rho} \cap \K$ and we assume that $\K/\K'$ is a finite field extension. We let $s=\deg(F(y))$.

\begin{theorem} \label{th:newMRDl>1}
    Let $f$ be a monic irreducible factor of $F(x^n)$ and assume that $f \notin Z(R)$. Let $k < m$ be a positive integer, then the set
    \[
    S_{n,s\ell,k}(\eta,\rho,F)=\{a_0+a_1x+\ldots+a_{sk\ell-1}x^{sk\ell-1}+\eta a_0^{\rho} x^{sk\ell}+RF(x^n) \colon a_i \in \LL   \}
    \]
    defines a $\K'$-linear MRD code in $R_F \cong M_m(\mathcal{E}(f))$ having minimum distance $m-k+1$, for any $\eta \in \LL$ such that $\N_{\LL/\K'}(\eta) \N_{\K/\K'}((-1)^{sk\ell(n-1)}F_0^{k\ell}) \neq 1$, where $\ell=\ell_F=n/m$.
\end{theorem}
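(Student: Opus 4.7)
The plan is to mirror the proof of \Cref{th:newsheekey} from \cite{sheekey2020new,thompson2023division}, replacing every appeal to their norm/rank identity (which assumed $\ell_F=1$) by the general version \Cref{th:boundrankinfinite}, which we have just proved. Concretely, we need to check three things: (i) $S_{n,s\ell,k}(\eta,\rho,F)$ is a $\K'$-linear subspace of $R_F$; (ii) its $\K'$-dimension equals the Singleton bound \eqref{eq:singletonbound} for codes in $M_m(\mathcal{E}(f))$ with minimum distance $m-k+1$; and (iii) every nonzero element has rank at least $m-k+1$.

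For (i), the only nontrivial point is that the map $a_0 \mapsto \eta a_0^\rho$ is $\K'$-linear; this holds because $\rho$ fixes $\K'$ by the definition $\K'=\Fix{\LL}{\rho}\cap\K$. For (ii), the element is determined by the free parameters $(a_0,a_1,\ldots,a_{sk\ell-1}) \in \LL^{sk\ell}$, so
\[
\dim_{\K'}S_{n,s\ell,k}(\eta,\rho,F) = sk\ell\,[\LL:\K'] = sk\ell\cdot n\,[\K:\K'] = mks\ell^2\,[\K:\K'],
\]
using $n=m\ell$. On the other hand, by \Cref{th:isomorphismtheoremeigen} the division algebra $\mathcal{E}(f)$ has $\K'$-dimension $s\ell^2[\K:\K']$, so the Singleton bound \eqref{eq:singletonbound} for distance $m-k+1$ in $M_m(\mathcal{E}(f))$ is $m\cdot k\cdot s\ell^2[\K:\K']$, matching.

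The heart is (iii). Let $a(x)=a_0+a_1x+\cdots+a_{sk\ell-1}x^{sk\ell-1}+\eta a_0^\rho x^{sk\ell}$ be a nonzero element of $S_{n,s\ell,k}(\eta,\rho,F)$. Since $\deg(a(x))\leq sk\ell$, \Cref{th:boundrankinfinite} gives $\rk(a(x))\geq m-k$. Suppose for contradiction that equality holds; then $\deg(a(x)) = sk\ell$, which forces $a_0\neq 0$ and $\eta\neq 0$, and
\[
\frac{\N_{\LL/\K}(a_0)}{\N_{\LL/\K}(\eta a_0^\rho)} = (-1)^{sk\ell(n-1)}F_0^{k\ell}.
\]
Apply $\N_{\K/\K'}$ to both sides and use transitivity of the norm through the tower $\K'\subseteq \K\subseteq \LL$, together with the identity $\N_{\LL/\K'}(a_0^\rho) = \N_{\LL/\K'}(a_0)$, which holds because $\rho$ fixes $\K'$ and $\N_{\LL/\K'}$ takes values in $\K'$. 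After cancelling $\N_{\LL/\K'}(a_0)\neq 0$, we obtain
\[
\N_{\LL/\K'}(\eta)\cdot\N_{\K/\K'}\!\bigl((-1)^{sk\ell(n-1)}F_0^{k\ell}\bigr) = 1,
\]
contradicting the hypothesis on $\eta$. If instead $a_0=0$ or $\eta=0$, then the coefficient of $x^{sk\ell}$ vanishes, so $\deg(a(x))<sk\ell$; by the same \Cref{th:boundrankinfinite} the case of equality $\rk(a(x))=m-k$ forces $\deg(a(x))=sk\ell$, again impossible, so $\rk(a(x))\geq m-k+1$.

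The substantive difficulty has already been absorbed into \Cref{th:charactesticAfinfinite} and \Cref{th:boundrankinfinite}: extending the identity $\chi_{A_f}(y)=F(y)$ to $\chi_{A_f}(y)=F(y)^{\ell_F}$ for arbitrary $\ell_F$ was what powers the key normalisation. Modulo those results, the only remaining delicate step is the invariance $\N_{\LL/\K'}\circ\rho = \N_{\LL/\K'}$, which we verify via the condition $\K'\subseteq \Fix{\LL}{\rho}$.
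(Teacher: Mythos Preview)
Your proof is correct and follows essentially the same approach as the paper's: both reduce to \Cref{th:boundrankinfinite} for the rank lower bound, assume equality to derive the norm identity, and then push it down to $\K'$ via transitivity to contradict the hypothesis on $\eta$. One small remark: your justification of $\N_{\LL/\K'}(a_0^\rho)=\N_{\LL/\K'}(a_0)$ (``because $\rho$ fixes $\K'$ and $\N_{\LL/\K'}$ takes values in $\K'$'') is not quite the right reason---the point is that $\rho$ is a $\K'$-automorphism of $\LL$, so conjugating the multiplication-by-$a_0$ map by $\rho$ preserves its determinant; the paper sidesteps this by writing $\N_{\LL/\K}(a_0^\rho a_0^{-1})$ and taking $\N_{\K/\K'}$, but the substance is identical.
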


\begin{proof}
    Let $\C=S_{n,s\ell,k}(\eta,\rho,F)$. First, note that since $k<m$, we have $sk\ell=skn/m<sn=\deg(F(x^n))$. Hence, we have that $\C$ is $\K'$-linear having dimension $nsk\ell[\K:\K']$ over $\K'$.  Using the Singleton's bound, we get
    \begin{align*}
    nsk\ell[\K:\K'] & =\dim_{\K'}(\C) \\ &\leq m(m-d(\C)+1)[\mathcal{E}(f):\K'] \\ & =m(m-d(\C)+1)[\mathcal{E}(f):E_F][E_F:\K][\K:\K']\\ & =m(m-d(\C)+1)\ell^2s[\K:\K'],\\
    \end{align*}
    where the last equality follows by the fact $\mathcal{E}(f)$ has degree $\ell$ over $E_F$, cf. \Cref{th:isomorphismtheoremeigen}. This implies that $d(\C) \leq m-k+1$ and so
    in order to prove that $S_{n,s\ell,k}(\eta,\rho,F)$ defines an MRD code, it is enough to prove that the rank of its non-zero element is at least $m-k+1$.
To this end, let $a(x)=a_0+a_1x+\ldots+a_{sk\ell-1}x^{sk\ell-1}+\eta a_0^{\rho}x^{sk\ell}$ such that $a(x)+RF(x^n)$ is a non zero element of $\C$. If $a_0=0$, the claim immediately follows by Theorem \ref{th:boundrankinfinite}. Suppose now, $a_0 \neq 0$, then $\rk(a(x)) \geq m-k$ and suppose by contradiction that $\rk(a(x)) = m-k$. Still by Theorem \ref{th:boundrankinfinite}, we need to have
\[
    \frac{\N_{\LL/\K}(a_0)}{\N_{\LL/\K}(\eta a_0^{\rho})}=(-1)^{sk\ell(n-1)}F_0^{k\ell},
    \]
from which
\[
(-1)^{sk\ell(n-1)}F_0^{k\ell} \N_{\LL/\K}(\eta)\N_{\LL/\K} (a_0^{\rho}a_0^{-1})=1.
\]
Taking the norm from
$\K$ to $\K'$ of both sides, we have a contradiction with our hypothesis. Therefore, $\rk(a(x)) \geq m-k+1$, that concludes the proof.
\end{proof}

By making use of \Cref{th:newMRDl>1} in the case $k=1$, we construct division algebras over $R/Rf$. We choose an explicit multiplication whose endomorphisms of multiplication coincide with $S_{n,s\ell,1}(\eta,\rho,F)$, and is hence a division algebra. We choose this multiplication in a manner which makes subsequent calculations of the parameters of the obtained algebra more amenable. We start with preliminary lemmata. 
\begin{lemma} \label{lem:descriptionconstanttwist}
     Let $f$ be a monic irreducible of $R$ and assume that $f\notin Z(R)$. Assume that $\deg(f(x))=s \ell$, where $\ell=\ell_F$ is as in \eqref{eq:rationn/nm}. For any $\eta \in \LL$ such that $\N_{\LL/\K'}(\eta f_0) \neq 1$, where $f_0$ is the constant coefficient of $f$, the map 
     \[
     \tau_\eta: y \in \LL \mapsto y-\eta y^{\rho}f_0 \in \LL
     \]
     is a $\KK'$-linear isomorphism. 
\end{lemma}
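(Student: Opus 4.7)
My plan is to verify the two parts of the statement separately: that $\tau_\eta$ is $\K'$-linear, and that it is bijective. Since $[\LL:\K'] = n[\K:\K']$ is finite by hypothesis, $\LL$ is a finite-dimensional $\K'$-vector space, so bijectivity will reduce to injectivity of $\tau_\eta$.

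For $\K'$-linearity, additivity of $\tau_\eta$ is immediate from additivity of $\rho$. For scalar multiplication by $c \in \K'$, the containment $\K' = \Fix{\LL}{\rho} \cap \K \subseteq \Fix{\LL}{\rho}$ gives $c^\rho = c$, so
\[
\tau_\eta(cy) \;=\; cy - \eta\,(cy)^\rho f_0 \;=\; cy - c\,\eta y^\rho f_0 \;=\; c\,\tau_\eta(y).
\]

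For injectivity, I would suppose $\tau_\eta(y)=0$ with $y\neq 0$, that is $y = \eta f_0\, y^\rho$, and then iterate. Applying $\rho$ to the identity and substituting, after $r$ steps one obtains
\[
y \;=\; \Bigl(\prod_{i=0}^{r-1}(\eta f_0)^{\rho^i}\Bigr) y^{\rho^r}.
\]
Choosing $r$ to be the order of $\rho$ in $\Aut(\LL)$, one has $y^{\rho^r}=y$; by Artin's theorem $[\LL:\Fix{\LL}{\rho}]=r$, so the displayed product is exactly $\N_{\LL/\Fix{\LL}{\rho}}(\eta f_0)$. Dividing through by $y\neq 0$ forces $\N_{\LL/\Fix{\LL}{\rho}}(\eta f_0)=1$, and transitivity of the norm along the tower $\K'\subseteq\Fix{\LL}{\rho}\subseteq\LL$ gives
\[
\N_{\LL/\K'}(\eta f_0) \;=\; \N_{\Fix{\LL}{\rho}/\K'}\!\bigl(\N_{\LL/\Fix{\LL}{\rho}}(\eta f_0)\bigr) \;=\; \N_{\Fix{\LL}{\rho}/\K'}(1) \;=\; 1,
\]
contradicting the hypothesis. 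Hence $y=0$, and $\tau_\eta$ is injective, therefore bijective.

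The only technical subtlety is the finiteness of the order of $\rho$, which is not stated among the hypotheses but follows from them: since $[\LL:\K'] = n[\K:\K']<\infty$ and $\rho$ fixes $\K'$, the automorphism $\rho$ lies in the finite group $\Aut(\LL/\K')$, hence has finite order. Once this is in place the calculation is a routine norm computation, and the hypothesis $\N_{\LL/\K'}(\eta f_0)\neq 1$ is calibrated precisely to preclude a nontrivial kernel.
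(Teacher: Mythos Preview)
Your proof is correct and follows essentially the same approach as the paper's. The paper's argument is terser: from $b-\eta b^{\rho}f_0=0$ it writes $\eta f_0=b\,b^{-\rho}$ and immediately asserts $\N_{\LL/\K'}(\eta f_0)=1$, whereas you unwind that norm computation explicitly via iteration, Artin's theorem, and transitivity along the tower $\K'\subseteq\Fix{\LL}{\rho}\subseteq\LL$; but this is the same mechanism spelled out in full, and your added remark on the finiteness of the order of $\rho$ makes the argument self-contained.
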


\begin{proof}
   Clearly, $\tau_{\eta}$ is a $\K'$-linear homomorphism of $\LL$. Recall that $[\LL:\K']<\infty$, and so, to prove the assertion, it is enough to show that $\tau_{\eta}$ is injective. Assume that there exists $b \in \LL^*$ such that $b-\eta b^{\rho}f_0=0$. Then $\eta f_0=b b^{-\rho}$, implying that $\N_{\LL/\K'}(\eta f_0)=1$, a contradiction.
\end{proof}

Now, we provide a description of an embedding of $R/Rf$ into $\End_{\mathcal{E}(f)}(R/Rf)$.

\begin{lemma} \label{lm:divisionetwisted}
    Let $f$ be a monic irreducible of $R$ and assume that $f\notin Z(R)$. Assume that $\deg(f(x))=s \ell$, where $\ell=\ell_F$ is as in \eqref{eq:rationn/nm} and let $f^*(x)=F(x^n)$. Let $\eta \in \LL$ such that $\N_{\LL/\K'}(\eta f_0) \neq 1$, where $f_0$ is the constant coefficient of $f$. Then the map 
    \begin{equation} \label{eq:phiextensiontwisted}
        \phi_S: (a_0-\eta a_0^{\rho}f_0)+\sum_{i=1}^{s\ell-1} a_ix^i \in \frac{R}{Rf} \mapsto (a_0-\eta a_0^{\rho}f_0)+\sum_{i=1}^{s\ell-1} a_i x^i +\eta a_0^{\rho} f(x) \in S_{n,s\ell,1}(\eta,\rho,F),
    \end{equation}
    is a $\K'$-linear isomorphism. Moreover, $\phi_S(R/Rf)$ is a $\K'$-vector subspace of $\End_{\mathcal{E}(f)}(R/Rf)$ in which every nonzero element is invertible.
\end{lemma}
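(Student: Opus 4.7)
The plan is to handle the two claims separately. First I would show that $\phi_S$ is a well-defined $\K'$-linear bijection onto $S_{n,s\ell,1}(\eta,\rho,F)$; then, viewing its image inside $\End_{\mathcal{E}(f)}(R/Rf)$ via the canonical isomorphism \eqref{eq:canonicalisomorphism}, I would deduce the invertibility of every nonzero element from \Cref{th:newMRDl>1} applied with $k=1$.

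For the first claim, the hypothesis $\N_{\LL/\K'}(\eta f_0) \neq 1$ triggers \Cref{lem:descriptionconstanttwist}, which makes $\tau_\eta\colon a_0 \mapsto a_0 - \eta a_0^\rho f_0$ a $\K'$-linear bijection on $\LL$. Combined with the standard representation of $R/Rf$ by polynomials of degree less than $s\ell$, this parameterises every element of $R/Rf$ uniquely in the form $(a_0 - \eta a_0^\rho f_0) + \sum_{i=1}^{s\ell-1} a_i x^i$ with $(a_0,\ldots,a_{s\ell-1}) \in \LL^{s\ell}$, so $\phi_S$ is well-defined on its source. Since $f \notin Z(R)$ forces $m > 1$, we have $s\ell < sn = \deg F(x^n)$, and hence the image polynomial has degree at most $s\ell$ and yields a valid representative in $R/RF(x^n)$. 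Expanding $\eta a_0^\rho f(x) = \eta a_0^\rho f_0 + \sum_{i=1}^{s\ell-1} \eta a_0^\rho f_i x^i + \eta a_0^\rho x^{s\ell}$ shows that $\phi_S$ sends a typical element to $a_0 + \sum_{i=1}^{s\ell-1}(a_i + \eta a_0^\rho f_i)x^i + \eta a_0^\rho x^{s\ell}$, which has exactly the shape of an element of $S_{n,s\ell,1}(\eta,\rho,F)$. Conversely, any $b_0 + \sum_{i=1}^{s\ell-1} b_i x^i + \eta b_0^\rho x^{s\ell}$ arises as $\phi_S$ applied to the unique element with $a_0 = b_0$ and $a_i = b_i - \eta b_0^\rho f_i$, so $\phi_S$ is bijective onto $S_{n,s\ell,1}(\eta,\rho,F)$. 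The $\K'$-linearity then follows from $\K' \subseteq \Fix{\LL}{\rho}$, which ensures that $\rho$ acts trivially on the scalars.

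For the second claim, the key step is to translate the norm condition. By \Cref{th:normskew}(a), $\N_{\LL/\K}(f_0) = (-1)^{s\ell(n-1)} F_0^{\ell}$, and transitivity of the norm gives $\N_{\LL/\K'}(f_0) = \N_{\K/\K'}\!\left((-1)^{s\ell(n-1)} F_0^\ell\right)$. Hence $\N_{\LL/\K'}(\eta f_0) \neq 1$ is equivalent to $\N_{\LL/\K'}(\eta)\,\N_{\K/\K'}\!\left((-1)^{s\ell(n-1)} F_0^\ell\right) \neq 1$, which is precisely the norm hypothesis of \Cref{th:newMRDl>1} with $k=1$. That theorem then ensures $S_{n,s\ell,1}(\eta,\rho,F)$ is MRD of minimum distance $m$, so every nonzero element has rank $m$; under \eqref{eq:canonicalisomorphism}, rank-$m$ elements of $R/RF(x^n)$ correspond to invertible elements of $\End_{\mathcal{E}(f)}(R/Rf)$, which is the desired invertibility statement.

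The only delicate point is the bookkeeping that bridges the norm condition stated here with the one required by \Cref{th:newMRDl>1}; once this equivalence is recorded, the rest is direct manipulation of the explicit definition of $\phi_S$ together with the fact that $f \notin Z(R)$ keeps the image within the valid degree range in $R/RF(x^n)$.
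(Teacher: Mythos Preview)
Your proposal is correct and follows essentially the same approach as the paper: both use \Cref{lem:descriptionconstanttwist} to parameterise $R/Rf$, expand to verify the image lies in $S_{n,s\ell,1}(\eta,\rho,F)$, translate the norm condition via \Cref{th:normskew}(a), and invoke \Cref{th:newMRDl>1} with $k=1$. The only cosmetic difference is that you establish bijectivity by writing down an explicit inverse, whereas the paper compares $\K'$-dimensions and checks injectivity; both are routine.
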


\begin{proof}
First, by using \Cref{lem:descriptionconstanttwist}, we derive that any element in $R/Rf$ can be described in the form $(a_0-\eta a_0^{\rho}f_0)+\sum_{i=1}^{s\ell-1} a_ix^i$, with $a_0,\ldots,a_{s\ell-1} \in \LL$. Now, note that \[(a_0-\eta a_0^{\rho}f_0)+\sum_{i=1}^{s\ell-1} a_i x^i +\eta a_0^{\rho} f(x) = a_0+\sum_{i=1}^{s\ell-1}(a_i+f_i\eta a_0^{\rho})x^i+\eta a_0^{\rho}x^s \in S_{n,s\ell,1}(\eta,\rho,F).\] Thus, $\phi_S$ is well-defined, and it is easy to check that it is a $\K'$-linear isomorphism. Finally, we show that $\phi_S$ is an isomorphism. Note that $\dim_{\K'}(R/Rf) = [\LL:\K'']s\ell = \dim_{\K'}(S_{n,s\ell,1}(\eta,\rho,F))$, so to prove the assertion, it is enough to show that $\tau_{\eta}$ is injective. Assume that $(a_0-\eta a_0^{\rho}f_0)+\sum_{i=1}^{s\ell-1} a_i x^i +\eta a_0^{\rho} f(x) \in S_{n,s\ell,1}(\eta,\rho,F) \subseteq \End_{\mathcal{E}(f)}(R/Rf)$ is the zero map. Then, by \eqref{eq:canonicalisomorphism}, we get 
\[
(a_0-\eta a_0^{\rho}f_0)+\sum_{i=1}^{s\ell-1} a_i x^i +\eta a_0^{\rho} f(x) = 0 \ \mod f
\]
which implies that
\[
(a_0-\eta a_0^{\rho}f_0)+\sum_{i=1}^{s\ell-1} a_i x^i  = 0 \ \mod f
\]
and so $a_0=\cdots=a_{s\ell-1}=0$, proving that $\phi_S$ is invertible. Finally, we need to show that any element of $\phi_S(R/Rf)$ is an invertible map of $S_{n,s\ell,1}(\eta,\rho,F)$. We note that, by using $(a)$ of \Cref{th:normskew}, that $\N_{\LL/\K'}(\eta) \N_{\K/\K'}((-1)^{sk\ell(n-1)}F_0^{k\ell}) = \N_{\LL/\K'}(\eta f_0) \neq 1$, so the assertion follows from \Cref{th:newMRDl>1} for the case $k=1$.
\end{proof}

\begin{theorem} \label{th:newalgebrasl>1}
    Let $f$ be a monic irreducible of $R$ and assume that $f\notin Z(R)$. Assume that $\deg(f(x))=s \ell$, where $\ell=\ell_F$ is as in \eqref{eq:rationn/nm}. Let $\eta \in \LL$ such that $\N_{\LL/\K'}(\eta f_0) \neq 1$, where $f_0$ is the constant coefficient of $f$. In $R/Rf$, the operation 
    \[
    \left( (a_0-\eta a_0^{\rho}f_0)+ \sum_{i=1}^{s\ell-1} a_ix^i \right) \star_S \sum_{i=0}^{s\ell-1} b_ix^i:= \left((a_0-\eta a_0^{\rho}f_0)+\sum_{i=1}^{s\ell-1} a_i x^i +\eta a_0^{\rho} f(x) \right) \sum_{i=0}^{s\ell-1} b_ix^i,
    \]
    defines a division algebra over $\K'$, having dimension $m\ell^2s[\K:\K']$.
\end{theorem}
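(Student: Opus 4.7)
The plan is to realise the operation $\star_S$ as the instance of the multiplication $\star_\phi$ from \Cref{lem:fromcodestoalgebras} obtained by taking $\C = S_{n,s\ell,1}(\eta,\rho,F)$ and $\phi = \phi_S$ from \Cref{lm:divisionetwisted}. Everything then reduces to applying results already in hand.

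First I would verify that the hypotheses of \Cref{th:newMRDl>1} are met for $k=1$. The assumption $\N_{\LL/\K'}(\eta f_0) \neq 1$ combined with \Cref{th:normskew}(a), which gives $\N_{\LL/\K}(f_0) = (-1)^{s\ell(n-1)}F_0^{\ell}$, yields $\N_{\LL/\K'}(\eta)\N_{\K/\K'}((-1)^{s\ell(n-1)}F_0^{\ell}) = \N_{\LL/\K'}(\eta f_0) \neq 1$, so \Cref{th:newMRDl>1} applies and $S_{n,s\ell,1}(\eta,\rho,F)$ is an MRD code in $R_F \cong M_m(\mathcal{E}(f))$ with minimum distance $m$, i.e.\ every nonzero element, viewed through the canonical isomorphism \eqref{eq:canonicalisomorphism} as an element of $\End_{\mathcal{E}(f)}(R/Rf)$, is an invertible endomorphism.

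Next I would unwind the definition of $\star_S$. By \Cref{lem:descriptionconstanttwist}, every element of $R/Rf$ admits a unique representation of the form $(a_0 - \eta a_0^{\rho}f_0) + \sum_{i=1}^{s\ell-1} a_i x^i$, so the left-hand input to $\star_S$ ranges over all of $R/Rf$. For such an $a$, the formula $\phi_S(a) = (a_0 - \eta a_0^{\rho}f_0) + \sum_{i=1}^{s\ell-1} a_i x^i + \eta a_0^{\rho} f(x)$ from \Cref{lm:divisionetwisted} lies in $S_{n,s\ell,1}(\eta,\rho,F) \subseteq R_F$, and its action on $b \in R/Rf$ via \eqref{eq:canonicalisomorphism} is precisely $\phi_S(a)\cdot b \mod_r f$. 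Since the right-hand side of the definition of $\star_S$ in the theorem statement is exactly $\phi_S(a)\cdot b$ reduced modulo $f$, we obtain $a \star_S b = \phi_S(a)(b) = a \star_{\phi_S} b$.

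Therefore $(R/Rf, \star_S)$ coincides with the algebra furnished by \Cref{lem:fromcodestoalgebras} with $\phi = \phi_S$, which is a division algebra over $\K'$. For the dimension, $[\LL:\K'] = [\LL:\K][\K:\K'] = n[\K:\K'] = m\ell[\K:\K']$, so
\[
\dim_{\K'}(R/Rf) = [\LL:\K']\cdot\deg(f) = m\ell[\K:\K']\cdot s\ell = m\ell^2 s[\K:\K'],
\]
as claimed. No step constitutes a genuine obstacle: the only mild subtlety is recognising that the chosen parametrisation of $\phi_S$ via $(a_0 - \eta a_0^\rho f_0)$ exactly compensates for the twist term $\eta a_0^\rho f(x)$, so that $\star_S$ is truly a restatement of $\star_{\phi_S}$ rather than a new operation.
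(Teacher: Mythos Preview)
Your proposal is correct and follows essentially the same route as the paper: set $\phi=\phi_S$ from \Cref{lm:divisionetwisted} and invoke \Cref{lem:fromcodestoalgebras}. The paper's proof is just those two lines, with the norm computation and invertibility already absorbed into the proof of \Cref{lm:divisionetwisted}; you have simply unpacked the same ingredients more explicitly and added the dimension count.
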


\begin{proof}
 Let $\phi:=\phi_S$ as in \Cref{lm:divisionetwisted}.
The assertion follows by \Cref{lem:fromcodestoalgebras}.
\end{proof}

\subsection{Idealisers and nuclei for our first new family} \label{sec:parfirstfamilyextended}

In order to distinguish the division algebras/MRD codes constructed above from previous constructions, and further new constructions that we will derive in subsequent sections, we compute the idealisers/nuclei of the above structures.

\begin{proposition} \label{prop:idealiserl>1}
Let $f$ be a monic irreducible factor of $F(x^n)$ and assume that $f \notin Z(R)$. Let
    $\C=S_{n,s\ell,k}(\eta,\rho,F)$ 
    defined as in \Cref{th:newMRDl>1} and $\ell>1$. Assume that $1\leq k \leq m/2$ and $sk\ell > 2$. Then: 
    \begin{itemize}
        \item if $\eta \neq 0$, we have
        \[
        \lid(\C) \cong \Fix{\LL}{\rho}, \ \ \ \rid(\C) \cong \Fix{\LL}{\rho^{-1} \circ \sigma^{sk\ell}}, \ \ \ C(\C)\cong E_F, \ \mbox{ and } \ Z(\C) \cong \K';
        \]
        \item if $\eta=0$, we have
        \[
        \lid(\C)=\LL, \ \ \ \rid(\C)=\LL, \ \ \ C(\C)\cong E_F, \ \mbox{ and } \ Z(\C)=\K.
        \]
    \end{itemize}
\end{proposition}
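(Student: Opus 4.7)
The plan is to follow the blueprint of the proof of \Cref{th:parconstrsheekey} (the $\ell_F=1$ case from Sheekey 2020), adapted throughout by replacing the degree $sk$ by $sk\ell$. The hypothesis $k\leq m/2$ gives $2sk\ell\leq sn$, which keeps the reductions modulo $F(x^n)$ under control when one multiplies an element of $\C$ by a polynomial of degree $<sk\ell$; the hypothesis $sk\ell>2$ guarantees that $\LL\cdot x^j\subseteq \C$ for at least two values of $j\geq 1$, supplying enough test elements. Throughout, I identify elements of $\LL$, $E_F$ and $\Fix{\LL}{\rho}$ with their canonical images in $R_F$ as constants or powers of $x^n$.

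First I would verify the containments $\supseteq$ by direct calculation. For the left idealiser, left-multiplication by a constant $g\in\LL$ sends a general element $a_0+\sum_{i=1}^{sk\ell-1}a_ix^i+\eta a_0^\rho x^{sk\ell}\in\C$ to $ga_0+\sum ga_ix^i+g\eta a_0^\rho x^{sk\ell}$; this lies in $\C$ for every $a_0\in\LL$ exactly when $g\eta=\eta g^\rho$, i.e.\ $g\in\Fix{\LL}{\rho}$. Symmetrically, right-multiplication by $g\in\LL$ sends $\eta a_0^\rho x^{sk\ell}$ to $\eta a_0^\rho g^{\sigma^{sk\ell}}x^{sk\ell}$, which matches the required twist $\eta(a_0g)^\rho$ iff $g^{\sigma^{sk\ell}}=g^\rho$. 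The centre $E_F=Z(R_F)$ trivially centralises $\C$, and $\K'=\Fix{\LL}{\rho}\cap\K\subseteq \lid(\C)\cap C(\C)=Z(\C)$; in the $\eta=0$ case these calculations degenerate to $\LL\subseteq \lid(\C_0),\rid(\C_0)$.

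Next I would establish the reverse containments. For $\eta=0$, since $1\in\C_0$ we have $g=g\cdot 1\in\C_0$, hence $\deg g<sk\ell$; then $g\cdot x^{sk\ell-1}\in\C_0$ together with the no-wraparound bound $2sk\ell-2<sn$ kills all $g_i$ with $i\geq 1$, so $g\in\LL$. For $\eta\neq 0$, given $g=\sum_{i=0}^{sn-1}g_ix^i\in\lid(\C)$ and $1\leq j\leq sk\ell-1$, the inclusion $x^j\in\C$ implies $g\cdot x^j\in\C$; writing $F(y)=y^s+\tilde f_{s-1}y^{s-1}+\cdots+\tilde f_0$, the terms $g_ix^{i+j}$ with $i+j\geq sn$ spread over the progression $\{t+ni':0\leq i'\leq s-1\}$ with coefficients $-\tilde f_{i'}$, possibly requiring further reductions when $sk\ell>n+1$. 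Reading off the coefficients of $x^\mu$ for $sk\ell<\mu\leq sn-1$ (which must vanish) together with the coefficient of $x^{sk\ell}$ (which must equal $\eta$ times the $\rho$-image of the constant term) yields a linear system in the $g_i$ that, exactly as in the proof of \cite[Theorem 9]{sheekey2020new}, forces $g_i=0$ for all $i\geq 1$. Hence $g\in\LL$, and applying $g$ to $1+\eta x^{sk\ell}\in\C$ gives $g\eta=\eta g^\rho$, so $g\in\Fix{\LL}{\rho}$. The $\rid$ argument is parallel, using $(1+\eta x^{sk\ell})g=g+\eta g^{\sigma^{sk\ell}}x^{sk\ell}\in\C$ to recover $g^{\sigma^{sk\ell}}=g^\rho$.

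Finally I would compute $C(\C)$ and $Z(\C)$. Any $h=\sum h_ix^i\in R_F$ commuting with $x\in\C$ satisfies $h_i=h_i^\sigma\in\K$, and commuting with $\LL\cdot x\subseteq\C$ forces $h_i(a^{\sigma^i}-a)=0$ for all $a\in\LL$, so $h_i=0$ unless $n\mid i$, yielding $h\in \K[x^n]/(F(x^n))\cong E_F$; the reverse inclusion is immediate since $E_F=Z(R_F)$. The centre is then the intersection inside $R_F$ of the image of $\Fix{\LL}{\rho}$ (resp.\ $\LL$ when $\eta=0$) with $E_F$; the constants lying in $E_F$ are precisely $\K$, so $Z(\C)\cong\Fix{\LL}{\rho}\cap\K=\K'$ in the twisted case and $Z(\C_0)=\LL\cap\K=\K$ in the untwisted case. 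The main obstacle is the degree bookkeeping in the previous paragraph: controlling the contribution of $g_{sn-1}x^{sn}$ (and, when $sk\ell>n+1$, of the further reductions required for $g_ix^{i+j}$ with $i+j$ significantly above $sn$) to the coefficients indexed by multiples of $n$. This is the same bookkeeping as in Sheekey's original proof, and the substitution $sk\mapsto sk\ell$ does not introduce any genuinely new phenomenon beyond what is handled there.
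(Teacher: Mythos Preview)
Your proposal is correct and follows essentially the same approach as the paper: both adapt the proof of \cite[Theorem 9]{sheekey2020new} by substituting $sk\ell$ for $sk$ throughout, using the hypotheses $k\leq m/2$ and $sk\ell>2$ exactly as you indicate. The paper is slightly more concrete in the $\eta\neq 0$ idealiser step---it uses only $g\cdot x$ and $g\cdot x^2$ (so at most one reduction is needed, avoiding the ``further reductions when $sk\ell>n+1$'' that you worry about) to pin down $g_{ns-1}=0$ first and then cascade---while your sketch of the centraliser via commutation with $x$ and $\LL x$ is more explicit than the paper's, which simply defers to the last part of Sheekey's proof.
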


\begin{proof}
We use the same strategy of \cite[Theorem 9]{sheekey2020new}.
First we compute $\lid(\C)$, proving that it consists of constant polynomials. We start by proving that any element of $\lid(\C)$ has degree at most $sk\ell-1$. If $\eta=0$, then $1 \in \C$ and so $\lid(\C) \subseteq \C$, implying that any $g \in \lid(\C)$ has degree at most $sk\ell-1$. Assume now that $\eta \neq 0$. If $g\in \lid(\C)$, we must have that $gax^i\  \mod F(x^n)\in\C,$ for all $a\in \LL$ and $i\in \{1,\ldots,sk\ell-1\}$. As $sk\ell>1$, this set is non-empty. Consider $i=1$. If $F(y)=F_0+F_1y+\ldots+F_{s-1}y^{s-1}+y^s$, then 
\[
gx \ \mod F(x^n) = \left(\sum_{i=1}^{ns-1} g_{i-1}x^i\right) - g_{ns-1}\left( \sum_{j=0}^{s-1}F_j x^{nj} \right).
\]
Hence for all $i\in \{sk\ell+1,\ldots,ns-1\}$, we have
\[
g_{i-1} = \left\{\begin{array}{ll} 0 &i \not\equiv 0 \ \mod n\\g_{ns-1}F_{i/n}&i\equiv 0 \ \mod n. \end{array}\right.
\]
We want to show that $g_{ns-1}=0$, proving that $\deg(g)\leq sk\ell-1$. 
The coefficient of $x^{sk\ell}$ in $ gx\ \mod F(x^n)$ is $g_{sk\ell-1}-g_{ns-1}F_{sk\ell/n}$, where $F_{sk\ell/n}:=0$, if $sk\ell/n$ is not an integer. Meanwhile, the constant coefficient is $-g_{ns-1}F_0$. Therefore as $gx\ \mod F(x^n) \in \C$, we need
\[
g_{sk\ell-1} -g_{ns-1}F_{sk\ell/n}= -\eta F_0^{\rho}g_{ns-1}^\rho.
\]
Since $sk\ell>2$, we also get that $gx^2 \ \mod F(x^n)\in \C$. The coefficient of $x^{sk\ell+1}$ in $gx^2 \ \mod F(x^n)$ is $g_{sk\ell-1} -g_{ns-1}F_{sk\ell/n}$, and so this must be zero. Hence we have that $-\eta F_0^{\rho}g_{ns-1}^\rho=0$, and since $\eta\ne 0 $ and $F_0\ne 0$, we must have $g_{ns-1}=0$, as claimed.
Therefore \[\lid(\C) \subseteq \{g \ \mod F(x^n) \colon \deg(g)\leq sk\ell\}.\] So, let $g\in \lid(\C)$, and $\deg(g)\leq sk\ell$. Then since $sk\ell>1$, we have $x^{sk\ell-1}\in \C$, and hence $gx^{sk\ell-1} \ \mod F(x^n)\in \C$. Now since $\deg(gx^{sk\ell-1})\leq 2sk\ell-1<ns$, we have $gx^{sk\ell-1} \in \C$. But $gx^{sk\ell-1} = g_0x^{sk\ell-1}+g_1x^{sk\ell}+\ldots g_{sk\ell}x^{2sk\ell-1}$, and so $g = g_0$. Clearly if $a\in \C$ with $a_0=0$, then $g_0 a\in \C$ for all $g_0\in L$. Now $g_0(a_0+\eta a_0^{\rho}x^{sk\ell})\in \C$ if and only if $g_0\eta a_0^{\rho} = \eta (g_0a_0)^\rho$, and so if $\eta\ne0$ then $g_0^{\rho}=g_0$. Hence 
\[
\lid(\C) \simeq \left\{\begin{array}{ll}\Fix{\LL}{\rho} &\eta \ne 0\\ \LL&\eta=0.\end{array}\right.
\]
Arguing as in the first part, we can show that if $g\in \lid(\C)$ then $\deg(g)\leq sk\ell$, and $g=g_0$. Now, $(a_0+\eta a_0^{\rho}x^{sk\ell})g_0\in \C$ if and only if $g_0^{\sigma^{sk\ell}}\eta a_0^{\rho} = \eta (g_0a_0)^\rho$, and so if $\eta\ne 0$ then $g_0^{\rho}=g_0^{\sigma^{sk}}$. Hence 
\[
\rid(\C) \simeq \left\{\begin{array}{ll}\Fix{\LL}{\rho^{-1} \circ \sigma^{sk\ell}} &\eta \ne 0\\ \LL&\eta=0.\end{array}\right.
\]
Hence the left idealiser and the right idealiser are as claimed. 
Arguing as in the last part of \cite[proof of Theorem 9]{sheekey2020new}, we get the assertion.
\end{proof}

Next, we determine the nuclei of the division algebras $(R/Rf,\star_S)$. Note that $(R/Rf,\star_S)$ is not unital, so we first need to determine a unital division algebra that is isotopic to $(R/Rf,\star_S)$.

\begin{lemma} \label{lm:Effield}
    Let $f$ be a monic irreducible of $R$ and assume that $f \notin Z(R)$. Assume that $\deg(f(x))=s \ell$, where $\ell=\ell_F$ is as in \eqref{eq:rationn/nm} and let $f^*(x)=F(x^n)$. Let 
    \[
    E_f =\{z(x^n)+Rf \colon z(x^n) \in Z(R)\}.
    \]
    Consider the map $z(x^n) + RF(x^n) \in E_F \mapsto z(x^n)+Rf \in E_f$. Then $E_f$ with the multiplication induced by this map is a field isomorphic to $E_F$.
\end{lemma}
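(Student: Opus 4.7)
The plan is to construct an explicit ring isomorphism $\psi : E_F \to E_f$ given by $\psi(z(x^n) + RF(x^n)) = z(x^n) + Rf$, show that it is well-defined and bijective, and conclude that $E_f$ inherits the field structure of $E_F$ (which is a field by the irreducibility of $F(y)$ together with \Cref{lm:irreduciblemaximal}).

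First I would check that $\psi$ is well-defined. Since $F(x^n) = f^*$ is the bound of $f$, the polynomial $f$ right-divides $F(x^n)$, and therefore $RF(x^n) \subseteq Rf$. Consequently, if $z_1(x^n) - z_2(x^n) \in RF(x^n)$, then $z_1(x^n) - z_2(x^n) \in Rf$, so $\psi$ depends only on the coset of $z(x^n)$ modulo $RF(x^n)$. Surjectivity is immediate from the definition of $E_f$.

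The main step is injectivity. Suppose $z(x^n) \in Z(R) \cap Rf$. Since $z(x^n)$ is central, $Rz(x^n) = z(x^n)R$ is a two-sided ideal of $R$ contained in $Rf$. By the defining property of the bound, $RF(x^n) = Rf^*$ is the largest two-sided ideal of $R$ contained in $Rf$, so $Rz(x^n) \subseteq RF(x^n)$, which yields $z(x^n) \in RF(x^n)$. Hence $\psi$ is a bijection. This is the only nontrivial step; the rest is bookkeeping.

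Finally, since $E_F$ is a field, transporting its structure through $\psi$ gives $E_f$ the structure of a field isomorphic to $E_F$. As a consistency check, I would note that this induced multiplication agrees with the natural one inherited from the Petit algebra $S_f = R/Rf$: for $z_1(x^n), z_2(x^n) \in Z(R)$ the product $z_1(x^n) \circ z_2(x^n) = z_1(x^n)z_2(x^n) \mod_r f$ has as coset representative the central polynomial $z_1(x^n)z_2(x^n) \in Z(R)$, and this matches $\psi(z_1(x^n)z_2(x^n) + RF(x^n))$. Thus $E_f$ is a subfield of $S_f$ canonically isomorphic to $E_F$.
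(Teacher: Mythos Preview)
Your proof is correct and follows essentially the same approach as the paper: both arguments establish well-definedness via $RF(x^n)\subseteq Rf$, note surjectivity is immediate, and prove injectivity by invoking the defining property of the bound (you phrase it as ``$Rz(x^n)$ is a two-sided ideal contained in $Rf$, hence in $Rf^*=RF(x^n)$'', while the paper says ``$f$ divides the central element $z(x^n)$, hence so does its bound $F(x^n)$''---these are the same statement). Your additional consistency check that the transported multiplication agrees with the Petit-algebra product on $E_f$ is a nice touch not made explicit in the paper.
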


\begin{proof}
It is easy to see that the map $z(x^n) + RF(x^n) \in E_F \mapsto z(x^n)+Rf \in E_f$ is a homomorphism with respect to both addition and multiplication. Moreover, it is clearly surjective, and so it suffices to show injectivity. Assume that $z(x^n) + RF(x^n)=0+Rf$, with $z(x^n) \in Z(R)$, then $f$ is a right divisor of $z(x^n)$. Now, $z(x^n)\in Z(R)$ and since $f \neq x$ is irreducible then $F(x^n)$ is a bound of $f$, implying that $F(x^n)$ divides $z(x^n)$. Therefore, $z(x^n) +RF(x^n)=0+RF(x^n)$, proving injectivity.
\end{proof}

Note that $x^{ns}+Rf \in E_f$, and since $E_f$ is a field, there exists $z(x^n) \in Z(R)$ such that $z(x^n)x^{ns} =1 \ \mod f$. As a consequence, $z(x^n)x^{ns-1}$ is the inverse of $x$ in $R/Rf$.

\begin{theorem} \label{th:twistedunital}
    Let $f$ be a monic irreducible of $R$ and assume that $f\notin Z(R)$. Assume that $\deg(f(x))=s \ell$, where $\ell=\ell_F$ is as in \eqref{eq:rationn/nm}. Let $\eta \in \LL$ such that $\N_{\LL/\K'}(\eta f_0) \neq 1$, where $f_0$ is the constant coefficient of $f$. Let $z(x^n) \in Z(R)$ such that $z(x^n)x^n=1 \ \mod f$. In $R/Rf$, the operation 
    \[
    \left((a_0-\eta a_0^{\rho}f_0)+ \sum_{i=1}^{s\ell-1} a_ix^i \right) \star_{S'} \sum_{i=0}^{s\ell-1} b_ix^i:=\]
    \[\left((a_0-\eta a_0^{\rho}f_0)+\sum_{i=1}^{s\ell-1} a_i x^{i} +\eta a_0^{\rho} f(x)\right) z(x^n)x^{sn-1}  \sum_{i=0}^{s\ell-1} b_ix^i,
    \]
    defines a unital division algebra $(R/Rf,\star_{S'})$ over $\K'$ with unity $x$ that is isotopic to $(R/Rf,\star_S)$ as in \Cref{th:newalgebrasl>1}, having dimension $m\ell^2s[\K:\K']$.
\end{theorem}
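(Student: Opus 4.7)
The plan is to reduce everything to Theorem~\ref{th:newalgebrasl>1} by recognising that $\star_{S'}$ is obtained from $\star_S$ by inserting the (two-sided) inverse of $x$ in the ring $R/Rf$. By Lemma~\ref{lm:Effield}, the class $x^{sn}+Rf$ is a non-zero element of the field $E_f$, so there exists $z(x^n)\in Z(R)$ with $z(x^n)x^{sn}\equiv 1\pmod{f}$; since $z(x^n)$ is central, $z(x^n)x^{sn-1}$ is a two-sided inverse of $x$ in $R/Rf$. Writing $\phi_S(a)$ for the representative of $a\in R/Rf$ inside $S_{n,s\ell,1}(\eta,\rho,F)$ produced by Lemma~\ref{lm:divisionetwisted}, the two multiplications read
\[
a \star_S b \;=\; \phi_S(a)\,b \pmod{f}, \qquad
a \star_{S'} b \;=\; \phi_S(a)\,z(x^n)x^{sn-1}\,b \pmod{f}.
\]

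First I would verify that $x$ is a two-sided identity for $\star_{S'}$. Writing $x$ in the canonical form of Lemma~\ref{lem:descriptionconstanttwist} forces $a_0=0$, hence $\phi_S(x)=x$, and so
\[
x \star_{S'} b \;=\; x\cdot z(x^n)x^{sn-1}\cdot b \;=\; z(x^n)x^{sn}\cdot b \;\equiv\; b \pmod{f}.
\]
For the right identity, the defining formula shows that $\phi_S(a)-a=\eta a_0^{\rho}f(x)$, which vanishes modulo $f$; therefore $a \star_{S'} x = \phi_S(a)\,z(x^n)x^{sn}\equiv \phi_S(a)\equiv a\pmod{f}$.

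Next I would exhibit an explicit isotopy with $(R/Rf,\star_S)$. Let $h:R/Rf\to R/Rf$ be the map $h(b)=xb\pmod{f}$; this is an invertible $\K'$-linear endomorphism of $R/Rf$, its inverse being left multiplication by $z(x^n)x^{sn-1}$. Using that $z(x^n)\in Z(R)$,
\[
a \star_{S'} h(b) \;=\; \phi_S(a)\,z(x^n)x^{sn-1}\cdot xb \;=\; \phi_S(a)\,z(x^n)x^{sn}\cdot b \;\equiv\; \phi_S(a)\,b \;=\; a \star_S b \pmod{f},
\]
so the triple $(\mathrm{id},\mathrm{id},h)$ is a $\K'$-linear isotopy from $(R/Rf,\star_S)$ to $(R/Rf,\star_{S'})$. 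Since being a division algebra is preserved under isotopy, Theorem~\ref{th:newalgebrasl>1} implies that $(R/Rf,\star_{S'})$ is a division algebra. Finally, the $\K'$-dimension is unchanged from Theorem~\ref{th:newalgebrasl>1}: $\dim_{\K'}(R/Rf)=[\LL:\K']\cdot s\ell = n[\K:\K']\cdot s\ell = m\ell^2 s[\K:\K']$.

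The only genuine obstacle is bookkeeping: one must confirm that the inserted element $z(x^n)x^{sn-1}$ is a \emph{two-sided} inverse of $x$ in $R/Rf$ (which is where centrality of $z(x^n)$ is essential) and that the resulting product, expanded out, coincides with the RHS in the definition of $\star_{S'}$ after reduction modulo $f$. All remaining steps are direct substitutions.
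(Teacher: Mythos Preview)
Your proof is correct and follows essentially the same strategy as the paper: exhibit an explicit isotopy between $\star_S$ and $\star_{S'}$, then invoke Theorem~\ref{th:newalgebrasl>1}. The paper's proof is terser---it records the isotopy $(h_1,h_2,h_3)=(\mathrm{id},\,g\mapsto g\,z(x^n)x^{sn-1},\,\mathrm{id})$ (modifying the left factor) rather than your $(\mathrm{id},\mathrm{id},h)$ with $h(b)=xb$ (modifying the right factor), and it does not separately verify that $x$ is a two-sided identity; your version supplies that check and the dimension count explicitly, which is a welcome addition.
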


\begin{proof}
    By considering the invertible maps $h_1=h_3=id$ and $h_2:g(x) \in R/Rf \mapsto g(x)z(x)x^{sn-1} \in R/Rf$, we have that 
    \[
    h_1(a(x) \star_{S'} b(x))=h_2(a(x)) \star_S h_3(b(x)),
    \]
    for every $a(x).b(x) \in R/Rf$.
\end{proof}

\begin{theorem} \label{th:righttwistedinfinite}
     Consider the unital division algebra $\A=(R/Rf,\star_{S'})$ defined as in \Cref{th:twistedunital}, with $\eta \neq 0$. Assume that $s \ell>2$.
        Then: 
        \[
        \NN_l(\A) \cong \Fix{\LL}{\rho}, \ \ \ \NN_m(\A) \cong \Fix{\LL}{\rho^{-1} \circ \sigma^{s\ell}}, \ \ \ \NN_r(\A) \cong \mathcal{E}(f), \ \mbox{ and } \ Z(\A) \cong \K';
        \]
\end{theorem}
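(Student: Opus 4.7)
The plan is to identify the spread set of $\A$ with a right-translate of the MRD code $S := S_{n,s\ell,1}(\eta,\rho,F)$ from \Cref{th:newMRDl>1}, and then read off the nuclei using the dictionary of \Cref{prop:nuc}. First, $x$ is the identity of $\A$: since $z(x^n)x^{ns}-1$ is a central left multiple of $f$ and $F(x^n)$ is the bound of $f$, we have $F(x^n)\mid z(x^n)x^{ns}-1$, so $z(x^n)x^{ns}=1$ in $R/RF(x^n)$. Consequently $B:=z(x^n)x^{ns-1}$ is invertible in $R/RF(x^n)$ with two-sided inverse $x$, and $L_x^{\A}(b) = \phi_S(x)\cdot B\cdot b = xBb = b$ in $R/Rf$. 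Under the canonical isomorphism~\eqref{eq:canonicalisomorphism}, the spread set is $\C(\A) = \phi_S(R/Rf)\cdot B = S\cdot B$, and it contains the identity.

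For the left and middle nuclei and the centre, I would exploit the equivalence $\C(\A) = S\cdot B$ together with \Cref{prop:idequiv}: $\lid(\C(\A)) = \lid(S)$ and $\rid(\C(\A)) = B^{-1}\rid(S)B\cong\rid(S)$; furthermore $Z(\C(\A))\cong Z(S)$, which follows by transferring via the invertible $B$ combined with the fact that $\C(\A)$ contains the identity. The hypotheses of \Cref{prop:idealiserl>1} with $k=1$ hold---namely $m\geq 2$ (because $f\notin Z(R)$) and $sk\ell = s\ell > 2$---so $\lid(S)\cong \Fix{\LL}{\rho}$, $\rid(S)\cong \Fix{\LL}{\rho^{-1}\circ\sigma^{s\ell}}$, and $Z(S)\cong\K'$. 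Translating through \Cref{prop:nuc} yields the desired formulas for $\NN_l(\A)$, $\NN_m(\A)$, and $Z(\A)$.

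For the right nucleus, I would argue directly. Expanding the associator $(a\star_{S'}b)\star_{S'}c - a\star_{S'}(b\star_{S'}c)$ in $R/Rf$ using $a\star_{S'}b = \phi_S(a)\cdot B\cdot b$ together with the Petit-style reductions modulo $f$, one finds that the associator equals $\gamma\cdot fBc \pmod{f}$ for some $\gamma\in R/RF(x^n)$ varying with $(a,b)$. The key fact is that $fBx = z(x^n)x^{ns}f \equiv f \pmod{RF(x^n)}$, so for $c = x\alpha$ with $\alpha\in\mathcal{E}(f)$ one has $fBc \equiv f\alpha \pmod{RF(x^n)}$, which lies in $Rf$ (as $\alpha\in\mathcal{E}(f)$ and $RF(x^n)\subseteq Rf$). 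This gives $x\mathcal{E}(f)\subseteq\NN_r(\A)$. The map $\alpha\mapsto x\alpha$ is then a ring isomorphism $\mathcal{E}(f)\to x\mathcal{E}(f)\subseteq\A$, via $x\alpha\star_{S'}x\beta = \phi_S(x\alpha)\cdot B\cdot x\beta = x\alpha\cdot(Bx)\cdot\beta = x(\alpha\beta)$, using $\phi_S(x\alpha)=x\alpha$ and $Bx = 1$ in $R/RF(x^n)$. The main obstacle is the reverse inclusion $\NN_r(\A)\subseteq x\mathcal{E}(f)$: one must check that, as $(a,b)$ vary, the coefficient $\gamma$ spans enough of $R/RF(x^n)$ to force $fBc\in Rf$, and then deduce that $fBc\in Rf$ forces $c\in x\mathcal{E}(f)$---a subtlety extending the classical derivation of $\NN_r(S_f)=\mathcal{E}(f)$ for the Petit algebra, complicated by the twist $\phi_S$ and the multiplier $B$.
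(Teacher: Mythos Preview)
Your route for $\NN_l$, $\NN_m$ is essentially the paper's: the paper also reduces to \Cref{prop:idealiserl>1} via $\C(\A)=S\cdot B$, just less explicitly. One wrinkle in your centre argument: \Cref{prop:idequiv} transfers $Z(\cdot)$ only when \emph{both} codes contain the identity, and $S=S_{n,s\ell,1}(\eta,\rho,F)$ does not (putting $a_0=1$, $a_i=0$ gives $1+\eta x^{s\ell}\neq 1$). This is easy to patch---e.g.\ compute $Z(\C(\A))=\lid(\C(\A))\cap Cen(\C(\A))$ directly, noting $\lid(\C(\A))=\lid(S)\cong\Fix{\LL}{\rho}$ consists of constants---but as written your appeal to \Cref{prop:idequiv} is not justified. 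Separately, your claim $\phi_S(x\alpha)=x\alpha$ fails when $\deg\alpha=s\ell-1$: then $x\alpha\bmod f$ picks up the constant term $-\alpha_{s\ell-1}^{\sigma}f_0$, so $a_0\neq 0$ in the $\phi_S$-parametrisation. The ring isomorphism $\mathcal{E}(f)\cong(x\mathcal{E}(f),\star_{S'})$ is still true, but needs the extra observation that $fBc\equiv 0\pmod f$ whenever $c\in x\mathcal{E}(f)$, so the $\eta a_0^{\rho}f$-twist disappears after multiplying by $Bc$.

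The genuine gap is exactly where you flag it: the reverse inclusion $\NN_r(\A)\subseteq x\mathcal{E}(f)$. You correctly reduce the associator to $\gamma\cdot fBc\pmod f$ with $\gamma=\gamma(a,b)$, but you do not produce any $(a,b)$ for which $\gamma$ acts injectively on $R/Rf$. The paper's device is to take $a=x^2$ and $b=1$ (this is precisely where the hypothesis $s\ell>2$ enters, so that $x^2\in R/Rf$). With this choice $a_0=0$, $v=x$ so $v_0=0$, and $u'f=x^2B^{-1}B-\,x\cdot$--- more concretely, since $Bx=1$ in $R/RF(x^n)$ one finds $u'fBc\equiv 0\pmod f$. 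The associator condition then collapses to
\[
x\,\eta t_0^{\rho}\cdot fBc\equiv 0\pmod f,
\]
where $t_0\in\LL^*$ is determined by $\tau_\eta(t_0)=1$. Left multiplication by the degree-one element $x\eta t_0^{\rho}$ has full rank on $R/Rf$ (its gcrd with $F(x^n)$ is trivial, as any proper divisor has degree $\geq s\ell>1$), so $fBc\equiv 0\pmod f$, i.e.\ $Bc\in\mathcal{E}(f)$, i.e.\ $c\in x\mathcal{E}(f)$. Without this single well-chosen test pair your argument is incomplete; ``spanning enough'' is the right intuition, but the proof needs one explicit $(a,b)$ giving a left-invertible $\gamma$.
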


\begin{proof}
Let $\C=\C(\A) \subseteq \End_{\K'}(R/Rf)$ be the spread set of $\A$. First we determine the left and right nucleus of $\A$. Via \Cref{prop:nuc}, we know that \[\NN_l(\A) \cong \{\phi \in \End_{\K'}(R/Rf) \colon \phi \circ \varphi \in \C, \mbox{ for every }\varphi \in \C\}.\]
Now, since $\C \subseteq \End_{\mathcal{E}(f)}(R/Rf)$, we deduce that the maps in $\NN_l(\A)$ need to be $\mathcal{E}(f)$-linear, and so $\NN_l(\A) \cong \{g \in R/RF(x^n) \colon g a \in \C \mbox{ for every } a \in \C\}$. Thus, the results follow from \Cref{prop:idealiserl>1}. The same arguments apply for the calculation of the middle nucleus of $\A$.
Now, we determine the right nucleus of $\A$. Consider
\[
a=(a_0-\eta a_0^{\rho}f_0)+ \sum_{i=1}^{s\ell-1} a_ix^i ,b=(b_0-\eta b_0^{\rho}f_0)+ \sum_{i=1}^{s\ell-1} b_ix^i \in \frac{R}{Rf},
\]
with $a_i,b_i \in \LL$,
and let $c \in R/Rf$. Let $Z(x):=z(x^n)x^{sn-1}$ and let $u,v,w,z \in R$ be the unique elements such that 
\begin{equation} \label{eq:quotiennucleitwisted}
     \left(a + \eta a_0^{\rho}f \right)Zb = uf +v 
     \end{equation}
     \[
   \left(b + \eta b_0^{\rho}f\right)Zc =  wf +z.
\]
Let $v(x)=(v_0-\eta v_0^{\rho}f_0)+ \sum_{i=1}^{s\ell-1} v_ix^i$. So, we have that 
\begin{equation} \label{eq:twisted(ab)c}
\begin{array}{rl}
    (a \star_{S'} b) \star_{S'} c & = \left(v+ \eta v_0^{\rho}f \right)Zc \ \mod f \\
    & = \left( \left(a + \eta a_0^{\rho}f \right)Zb-uf+ \eta v_0^{\rho}f\right)Zc \ \mod f \\
    \end{array}
\end{equation}
On the other hand,
\begin{equation} \label{eq:twisteda(bc)}
\begin{array}{rl}
    a \star_{S'} (b \star_{S'} c) & = \left(a+ \eta a_0^{\rho}f \right)Z \left(\left(b + \eta b_0^{\rho}f\right)Zc-wf \right) \mod f\\
    \end{array}
\end{equation}
Therefore, $c \in \NN_r(\mathbb{A})$, i,e. $(a \star_{S'} b) \star_{S'} c=a \star_{S'} (b \star_{S'} c)$ if and only if, by using \eqref{eq:twisted(ab)c} and \eqref{eq:twisteda(bc)},  
\begin{equation} \label{eq:conditionequivrighttwisted}
 -ufZc+\eta v_0^{\rho}f Zc = aZ\eta b_0^{\rho}fZc+\eta {a_0}^{\rho}fZ\eta b_0^{\rho}fZc  \ \ \mod f.
\end{equation}
We now prove that $\NN_r(\A)=x\mathcal{E}(f)$. Clearly, if $c \in x\mathcal{E}(f)$, then $x^{-1}c=Zc \in \mathcal{E}(f)$, and so $fZc=0 \ \mod f$. As a consequence, \eqref{eq:conditionequivrighttwisted} is satisfied, implying that $x\mathcal{E}(f) \subseteq \NN_r(\A)$.

Conversely, if $c \in \NN_r(\A)$, \eqref{eq:conditionequivrighttwisted} holds for any $a,b \in R/Rf$. Let $a=x^2$ and $b=1=t_0+\eta t_0^{\rho}f_0$, with $t_0 \in \LL^*$. Then \eqref{eq:quotiennucleitwisted} reads as \begin{equation} \label{eq:quotiennucleitwisted1} x^2Z = u'f+v'. \end{equation} By taking the equation modulo $f$, we obtain $v'=x$ and hence $v'_0=0$.

As a consequence, from \eqref{eq:conditionequivrighttwisted} we have
 \[
 -u'fZc= x^2Z\eta t_0^{\rho}fZc \ \ \mod f,
 \]
which, together with \eqref{eq:quotiennucleitwisted1}, implies that
 \[
 -x^2ZZc+xZc=x^2Z\eta t_0^{\rho}fZc.
 \]
 So 
 \[
 x\eta t_0^{\rho}fZc =0 \ \mod f,
 \]
which in turn implies that $c \in x \mathcal{E}(f)$.

The assertion follows from the fact that $\mathcal{E}(f)$ is isomorphic to $(x\mathcal{E}(f),\star_{S'})$.
\end{proof}

\subsection{Our second 
new family, extending Hughes-Kleinfeld semifields and Trombetti-Zhou codes.} \label{sec:secondfamily}

In this section, we extend two related constructions over finite fields to a new construction for both finite and infinite fields. In particular, the Hughes-Kleinfeld division algebras (semifields) \cite{hughes1960seminuclear} and the Trombetti-Zhou MRD codes \cite{trombetti2018new} were constructed using linearised polynomials, which can be viewed as a special case of skew polynomial rings. The construction over finite fields required a field extension of even degree, exploiting a subfield of index 2. To generalise, we make the below assumptions.

\begin{itemize}
    \item $\LL/\K$ is a cyclic Galois extension of degree $n\geq 2$, with $n=2t$ even;
\item $\LL'$ is the subfield of $\LL$ such that $\K\leq \LL'<\LL$ and $[\LL':\LL]=2$.
\end{itemize}

We recall that such a field $\LL'$ exists by the fundamental theorem of Galois Theory, since there exists a subgroup of $\mathrm{Gal}(\LL/\KK)$ of index $2$ generated by the $t$-th power of a generator of $\mathrm{Gal}(\LL/\KK)$.

We introduce and investigate the following family 
\begin{equation} \label{eq:extensiontrombzhou}
D_{n,s\ell,k}(\gamma,F):=\left\{ a_0'+\sum_{i=1}^{sk\ell-1} a_i x^i + \gamma a_0'' x^{sk\ell} +RF(x^n) \colon a_i \in \LL, a_0',a_0'' \in \LL' \right\} \subseteq \frac{R}{RF(x^n)},
\end{equation}
where $\gamma \in \LL^*$.

We prove that under certain assumption on $\gamma$, the above family produces examples of MRD codes.

\begin{theorem} \label{th:extendtrombmrd}
     Let $f$ be a monic irreducible factor of $F(x^n)$ and assume that $f \notin Z(R)$. Let $k < m$ be a positive integer. Then the set $D_{n,s\ell,k}(\gamma,F)$ defined as in \eqref{eq:extensiontrombzhou}
defines a $\K$-linear MRD code in $R_F \cong M_{m}(\mathcal{E}(f))$ with minimum distance $m-k+1$, for any $\gamma \in \LL \setminus \LL'$ such that $(-1)^{sk\ell}F_0^{k\ell}\N_{\LL/\K}(\gamma)$ is not a square in $\K$.
\end{theorem}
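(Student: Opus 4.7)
The overall strategy mirrors the proof of Theorem~\ref{th:newMRDl>1}: establish the $\K$-linearity and dimension of $D_{n,s\ell,k}(\gamma,F)$, use the Singleton bound to get the upper estimate $d \leq m-k+1$ for the minimum distance, and then use Theorem~\ref{th:boundrankinfinite} to show every nonzero element has rank at least $m-k+1$. The new ingredient is exploiting the fact that $a_0', a_0'' \in \LL'$ to produce squares in $\K$, and combining this with the assumption on $\gamma$.

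More concretely, I would proceed as follows. First, $\K$-linearity is immediate since $\K \leq \LL'$. The $\K$-dimension is $[\LL':\K] + (sk\ell-1)[\LL:\K] + [\LL':\K] = n + (sk\ell-1)n = nsk\ell = msk\ell^2$, which matches the right side of the Singleton bound \eqref{eq:singletonbound} (with $\F' = \K$, $\D = \mathcal{E}(f)$ of $\K$-dimension $\ell^2 s$) precisely when $d \leq m-k+1$. Thus it suffices to show that every nonzero $a(x) \in D_{n,s\ell,k}(\gamma,F)$ has rank at least $m-k+1$.

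For the rank estimate, let $a(x) = a_0' + \sum_{i=1}^{sk\ell-1} a_i x^i + \gamma a_0'' x^{sk\ell}$ be nonzero. If $a_0'' = 0$, then $\deg(a(x)) \leq sk\ell - 1 < sk\ell$, so by Theorem~\ref{th:boundrankinfinite} the rank cannot equal $m-k$ (that would force degree exactly $sk\ell$), hence rank $\geq m-k+1$. Assume now $a_0'' \neq 0$, so the leading coefficient is $\gamma a_0'' \neq 0$ and $\deg(a(x)) = sk\ell$. Theorem~\ref{th:boundrankinfinite} gives rank $\geq m-k$; suppose for contradiction that equality holds. Then $a_0' \neq 0$ (otherwise $\N_{\LL/\K}(a_0') = 0 \neq (-1)^{sk\ell(n-1)}F_0^{k\ell}\N_{\LL/\K}(\gamma a_0'')$) and
\[
(-1)^{sk\ell(n-1)} F_0^{k\ell} \N_{\LL/\K}(\gamma) = \frac{\N_{\LL/\K}(a_0')}{\N_{\LL/\K}(a_0'')}.
\]
Since $[\LL:\LL']=2$ and $a_0', a_0'' \in \LL'$, the transitivity of the norm yields $\N_{\LL/\K}(a_0') = \N_{\LL'/\K}(a_0')^2$ and likewise for $a_0''$, so the right hand side is a nonzero square in $\K$. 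Because $n = 2t$ is even, $(-1)^{sk\ell(n-1)} = (-1)^{sk\ell}$, and we conclude that $(-1)^{sk\ell} F_0^{k\ell} \N_{\LL/\K}(\gamma)$ is a square in $\K$, contradicting the hypothesis on $\gamma$.

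The main subtlety, rather than any deep obstacle, is the careful bookkeeping for the case $a_0'' = 0$ (where one must avoid directly applying Theorem~\ref{th:boundrankinfinite} with index $k-1$ since the degree can exceed $s(k-1)\ell$) and verifying that the parity $n-1$ odd is exactly what bridges between the hypothesis $(-1)^{sk\ell}$ and the formula $(-1)^{sk\ell(n-1)}$ produced by Theorem~\ref{th:boundrankinfinite}. The condition $\gamma \in \LL \setminus \LL'$, although not used explicitly in the rank computation, ensures that the element $\gamma a_0'' x^{sk\ell}$ is genuinely outside $\LL'[x]$ and so the code is not contained in a lower-dimensional object, which keeps the dimension count above sharp.
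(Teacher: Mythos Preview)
Your proof is correct and follows essentially the same route as the paper: compute the $\K$-dimension, reduce via the Singleton bound to a rank lower bound, and then use Theorem~\ref{th:boundrankinfinite} together with the fact that $\N_{\LL/\K}(a_0')/\N_{\LL/\K}(a_0'')$ is a square in $\K$ (since $a_0',a_0''\in\LL'$ and $[\LL:\LL']=2$) to derive a contradiction. Your treatment of the case $a_0''=0$ and your observation that rank $=m-k$ forces $a_0'\ne 0$ are slightly more explicit than the paper's, but the argument is the same.

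One small correction to your closing commentary: the hypothesis $\gamma\in\LL\setminus\LL'$ is \emph{not} needed for the dimension count. The coefficients $a_0'$, $a_1,\ldots,a_{sk\ell-1}$, and $\gamma a_0''$ occupy distinct $x$-degrees, so the $\K$-dimension is $2[\LL':\K]+(sk\ell-1)[\LL:\K]=nsk\ell$ for any nonzero $\gamma$. The paper's proof likewise never invokes $\gamma\notin\LL'$; that assumption is used elsewhere (e.g.\ in the idealiser computations of Theorem~\ref{th:idealisersexttromb}).
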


\begin{proof}
It can be easily proved that $\C=D_{n,s\ell,k}(\gamma,F)$ is $\K$-linear having dimension $nsk\ell$ over $\K$. Arguing as in the proof of \Cref{th:newMRDl>1}, to prove that $D_{n,s\ell,k}(\gamma,F)$ defines an MRD code in $M_{m}(\mathcal{E}(f))$ is enough to prove that the rank of its non-zero element is at least $m-k+1$.
For this aim, let $h(x)=a_0'+\sum_{i=1}^{sk\ell-1} a_i x^i + \gamma a_0'' x^{sk\ell} +RF(x^n)$ be a non zero element of $\C$. If $a_0''=0$, the claim immediately follows by Theorem \ref{th:boundrankinfinite}. So assume that $a_0'' \neq 0$, then $\rk(h(x)) \geq m-k$ and suppose by contradiction that $\rk(h(x)) = m-k$. Again by Theorem \ref{th:boundrankinfinite}, we must have
\[
    \frac{\N_{\LL/\K}(a_0')}{\N_{\LL/\K}(\gamma a_0'')}=(-1)^{sk\ell(n-1)}F_0^{k\ell}=(-1)^{ksd}F_0^{k\ell},
    \]
and so
\begin{equation} \label{eq:condgamma}
\frac{\N_{\LL/\K}(a_0')}{\N_{\LL/\K}(a_0'')}=(-1)^{sk\ell}F_0^{k\ell}\N_{\LL/\K}(\gamma)
\end{equation}
On the other hand, by the fundamental Theorem of the Galois Theory we have that \[\mathrm{Gal}(\LL/\LL')=\langle \sigma^t \rangle,\] and since $a,b \in \LL'$, we get 
\[
\frac{\N_{\LL/\K}(a_0')}{\N_{\LL/\K}(a_0'')}=\left(\frac{\N_{\LL'/\K}(a_0')}{\N_{\LL'/\K}(a_0'')}\right)^2.
\]
The last equation together with \eqref{eq:condgamma} lead to a contradiction to the assumption on \\ $(-1)^{sk\ell}F_0^{k\ell}\N_{\LL/\K}(\gamma)$.
\end{proof}

We construct division algebras over $R/Rf$ by employing \Cref{th:extendtrombmrd} in the case $k=1$. First, we establish a lemma that provides a description of an embedding of $R/Rf$ into $\End_{\mathcal{E}(f)}(R/Rf)$.

\begin{lemma} \label{lm:divisionexthughes}
    Let $f$ be a monic irreducible of $R$ and assume that $f\notin Z(R)$. Assume that $\deg(f(x))=s \ell$, where $\ell=\ell_F$ is as in \eqref{eq:rationn/nm} and let $f^*(x)=F(x^n)$. Let $\gamma \in \LL$ such that $\gamma/f_0 \in \LL \setminus \LL'$ and $\N_{\LL/\K}(\gamma)$ is not a square in $\K$, where $f_0$ is the constant coefficient of $f$. Then $\gamma  \in \LL \setminus \LL'$ and the map 
    \begin{equation} \label{eq:phiextensionhughes}
        \phi_D: (a_0'+\gamma a_0'')+\sum_{i=1}^{s\ell-1} a_ix^i \in R/Rf \rightarrow (a_0'+\gamma a_0'')+\sum_{i=1}^{s\ell-1} a_i x^i - \frac{\gamma}{f_0} a_0'' f(x) \in D_{n,s\ell,1}\left(-\frac{\gamma}{f_0},F\right),
    \end{equation}
    for any $a_i \in \LL$ and $a_0',a_0'' \in \LL'$ is a $\K$-linear isomorphism. Moreover, $\phi_D(R/Rf)$ is a $\K$-vector space of invertible maps of $\End_{\mathcal{E}(f)}(R/Rf)$.
\end{lemma}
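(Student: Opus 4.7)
The plan is to verify three things in order: first, that the hypothesis on $\N_{\LL/\K}(\gamma)$ forces $\gamma \in \LL \setminus \LL'$; second, that $\phi_D$ is well-defined, lands inside $D_{n,s\ell,1}(-\gamma/f_0,F)$, and is a $\K$-linear isomorphism; and third, that the image consists of invertible maps in $\End_{\mathcal{E}(f)}(R/Rf)$, which I would deduce directly from \Cref{th:extendtrombmrd}.

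For the first step, I would argue by contradiction. Assume $\gamma \in \LL'$. Since $\Gal(\LL/\LL') = \langle \sigma^t \rangle$ fixes $\gamma$, one has $\N_{\LL/\LL'}(\gamma) = \gamma \cdot \gamma^{\sigma^t} = \gamma^2$. By transitivity of norm, $\N_{\LL/\K}(\gamma) = \N_{\LL'/\K}(\gamma^2) = \N_{\LL'/\K}(\gamma)^2$, a square in $\K$, contradicting the hypothesis. Hence $\gamma \in \LL \setminus \LL'$, and $\{1,\gamma\}$ is an $\LL'$-basis of $\LL$, so every constant coefficient $a_0 \in \LL$ admits a unique representation $a_0 = a_0' + \gamma a_0''$ with $a_0', a_0'' \in \LL'$.

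For the second step, I would expand the formula. Writing $f(x) = f_0 + \sum_{i=1}^{s\ell-1} f_i x^i + x^{s\ell}$ and substituting, the image becomes
$$\phi_D\Bigl((a_0' + \gamma a_0'') + \sum_{i=1}^{s\ell-1} a_i x^i\Bigr) = a_0' + \sum_{i=1}^{s\ell-1}\Bigl(a_i - \tfrac{\gamma}{f_0} a_0'' f_i\Bigr) x^i + \Bigl(-\tfrac{\gamma}{f_0}\Bigr) a_0''\, x^{s\ell},$$
which is exactly of the form prescribed for $D_{n,s\ell,1}(-\gamma/f_0,F)$. Linearity over $\K$ is clear. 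For injectivity, the image is a polynomial of degree $\leq s\ell < ns = \deg F(x^n)$, so its vanishing modulo $F(x^n)$ forces the polynomial to vanish in $R$, whence $a_0' = a_0'' = 0$ and all $a_i = 0$. A dimension count over $\K$ then closes the argument: $\dim_\K(R/Rf) = ns\ell = \tfrac{n}{2} + \tfrac{n}{2} + n(s\ell-1) = \dim_\K D_{n,s\ell,1}(-\gamma/f_0,F)$.

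For the third step, I would invoke \Cref{th:extendtrombmrd} with twisting element $-\gamma/f_0$ and $k=1$. Using part (a) of \Cref{th:normskew} to substitute $\N_{\LL/\K}(f_0) = (-1)^{s\ell(n-1)} F_0^\ell$, a short manipulation yields
$$(-1)^{s\ell} F_0^\ell \, \N_{\LL/\K}(-\gamma/f_0) = (-1)^{n(1-s\ell)} \N_{\LL/\K}(\gamma) = \N_{\LL/\K}(\gamma),$$
since $n$ is even. This is not a square in $\K$ by hypothesis, so \Cref{th:extendtrombmrd} applies and $D_{n,s\ell,1}(-\gamma/f_0,F)$ is MRD with minimum distance $m$; hence every nonzero element has rank $m$ and is invertible in $\End_{\mathcal{E}(f)}(R/Rf) \cong M_m(\mathcal{E}(f))$. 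The main technical obstacle is precisely this norm computation: ensuring that the non-square condition on $\N_{\LL/\K}(\gamma)$ translates correctly into the non-square condition demanded by \Cref{th:extendtrombmrd} for the twisting element $-\gamma/f_0$. The other steps are essentially formal.
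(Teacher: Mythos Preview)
Your proposal is correct and follows essentially the same approach as the paper: prove $\gamma\notin\LL'$ by contradiction via the norm factoring as a square, verify the norm identity $(-1)^{s\ell}F_0^\ell\,\N_{\LL/\K}(-\gamma/f_0)=\N_{\LL/\K}(\gamma)$ using \Cref{th:normskew}(a), and then invoke \Cref{th:extendtrombmrd} with $k=1$. Your treatment is in fact slightly more detailed than the paper's in that you explicitly expand the image of $\phi_D$, check it lands in $D_{n,s\ell,1}(-\gamma/f_0,F)$, and carry out the injectivity and dimension count, whereas the paper simply asserts that $\phi_D$ is a $\K$-linear isomorphism once $\gamma$ and $\gamma/f_0$ are known to lie outside $\LL'$.
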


\begin{proof}
Assume by contradiction that $\gamma \in \LL'$. Then
\[
\N_{\LL/\KK}(\gamma)=(\NN_{\LL'/\KK}(\gamma))^2,
\]
implying that $\N_{\LL/\KK}(\gamma)$ is a square in $\K$, that contradicts our hypothesis. Now, from $(a)$ of \Cref{th:normskew} we have that
$
N_{\LL/\K}(f_0)=(-1)^{s\ell}F_0^{\ell},
$
where $F_0$ is the constant coefficient of $F(x^n)$. This implies that 
\[
(-1)^{s\ell}F_0^{\ell} \N_{\LL/\K}\left(- \frac{\gamma}{f_0} \right)=(-1)^{s\ell}F_0^{\ell} \frac{\N_{\LL/\K}(\gamma)}{(-1)^{s\ell}F_0^{\ell} }= \N_{\LL/\K}(\gamma).
\]
As a consequence, $(-1)^{s\ell}F_0^{\ell} \N_{\LL/\K}\left(- \frac{\gamma}{f_0} \right)$ is not a square in $\K$. Finally, consider $D_{n,s\ell,1}\left(-\frac{\gamma}{f_0},F\right)$ as in \Cref{eq:extensiontrombzhou}. Since $\gamma,\gamma/f_0 \in \LL/\LL'$, the map $\phi_D$ is a $\K$-linear isomorphism. Moreover, since $(-1)^{s\ell}F_0^{\ell} \N_{\LL/\K}\left(- \frac{\gamma}{f_0} \right)$ is not a square in $\K$, \Cref{th:extendtrombmrd} for the case $k=1$ proves the assertion. 
\end{proof}

\begin{theorem} \label{th:divisionexthughes}
    Let $f$ be a monic irreducible of $R$ and assume that $f\notin Z(R)$. Assume that $\deg(f(x))=s \ell$, where $\ell=\ell_F$ is as in \eqref{eq:rationn/nm}. Let $\gamma \in \LL$ such that $\gamma/f_0 \in \LL \setminus \LL'$ and $\N_{\LL/\K}(\gamma)$ is not a square in $\K$, where $f_0$ is the constant coefficient of $f$. In $R/Rf$, the operation 
    \[
    \begin{array}{rl}
         a \star_D b & =  \left((a_0'+\gamma a_0'')+\sum_{i=1}^{s\ell-1} a_ix^i \right)\star_D \sum_{i=0}^{s\ell-1} b_ix^i \\
         &:= \left((a_0'+\gamma a_0'')+\sum_{i=1}^{s\ell-1} a_i x^i - \frac{\gamma}{f_0} a_0'' f(x) \right) \sum_{i=0}^{s\ell-1} b_ix^i \\
         & = \left(a - \frac{\gamma}{f_0} a_0'' f \right) b,
    \end{array}
    \]
    for any $a_i,b_i \in \LL$ and $a_0',a_0'' \in \LL'$, defines a unital division algebra over $\K$ with unity 1, having dimension $ns\ell$. 
\end{theorem}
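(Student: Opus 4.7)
The plan is to deduce the theorem as a direct application of Lemma \ref{lm:divisionexthughes} and Lemma \ref{lem:fromcodestoalgebras}, together with a short verification that the element $1 \in R/Rf$ is a two-sided identity for $\star_D$.

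First I would invoke Lemma \ref{lm:divisionexthughes}: under the given hypotheses on $\gamma$, the map $\phi_D: R/Rf \to D_{n,s\ell,1}(-\gamma/f_0,F)$ described in \eqref{eq:phiextensionhughes} is a $\K$-linear isomorphism onto a $\K$-subspace of $\End_{\mathcal{E}(f)}(R/Rf)$ consisting (together with $0$) of invertible maps. Identifying $R/RF(x^n)$ with $\End_{\mathcal{E}(f)}(R/Rf)$ via the canonical isomorphism \eqref{eq:canonicalisomorphism}, the action of $\phi_D(a)$ on an element $b \in R/Rf$ is precisely left multiplication by $a - \frac{\gamma}{f_0}a_0''f$ modulo $f$; that is, $\phi_D(a)(b) = a \star_D b$. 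Consequently, applying Lemma \ref{lem:fromcodestoalgebras} with $\phi = \phi_D$ and $\K' = \K$ yields that $(R/Rf, \star_D)$ is a division algebra over $\K$ of dimension $m\ell^2 s = n s\ell$, since $m\ell = n$.

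Next I would verify that $1 \in R/Rf$ acts as a two-sided identity. On the one hand, for $a = 1$ we have $a_0' = 1$, $a_0'' = 0$ and $a_i = 0$ for $i \geq 1$, so $a - \frac{\gamma}{f_0}a_0''f = 1$, and hence $1 \star_D b = 1 \cdot b \mod f = b$ for every $b \in R/Rf$. On the other hand, for any $a \in R/Rf$ and $b = 1$, we obtain $a \star_D 1 = \bigl(a - \frac{\gamma}{f_0}a_0''f\bigr)\cdot 1 \mod f = a \mod f = a$, since $f \equiv 0 \mod f$. Thus $(R/Rf, \star_D)$ is unital with unity $1$.

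There is no serious obstacle here: all of the structural content (the invertibility of every nonzero left multiplication, equivalently the absence of zero divisors) is already packaged in Lemma \ref{lm:divisionexthughes} via the MRD property of $D_{n,s\ell,1}(-\gamma/f_0,F)$ with minimum distance $m$, and in Lemma \ref{lem:fromcodestoalgebras}. The only step that requires care is the identification of the endomorphism $\phi_D(a)$ with left multiplication by $a - \frac{\gamma}{f_0}a_0''f$ modulo $f$, so that the multiplication $\star_D$ defined explicitly in the statement coincides with $a \star_{\phi_D} b := \phi_D(a)(b)$ from Lemma \ref{lem:fromcodestoalgebras}; this identification, together with the routine unity check, completes the proof.
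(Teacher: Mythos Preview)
Your proposal is correct and follows essentially the same route as the paper: the paper's proof simply sets $\phi:=\phi_D$ from Lemma~\ref{lm:divisionexthughes} and appeals to Lemma~\ref{lem:fromcodestoalgebras}. Your version is in fact slightly more complete, since you spell out the identification $\phi_D(a)(b)=a\star_D b$ and explicitly verify that $1$ is a two-sided identity, a point the paper asserts in the statement but does not check in its proof.
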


\begin{proof}
Let $\phi:=\phi_D$ as in \Cref{lm:divisionexthughes}.
The assertion follows by \Cref{lem:fromcodestoalgebras}.
\end{proof}

\subsection{Idealisers and nuclei for our second new family} \label{sec:parsecondfamily}

In the next result, we compute the idealisers, the centralizer and the centre of $D_{n,s\ell,k}(\gamma,F)$.

\begin{theorem} \label{th:idealisersexttromb}
Let $f$ be a monic irreducible factor of $F(x^n)$ and assume that $f \notin Z(R)$.  Let $\C=D_{n,s\ell,k}(\gamma,F)$ defined as \eqref{eq:extensiontrombzhou}. Assume that $1\leq k \leq m/2$ and $sk\ell \geq 2$. Then 
        \[
        \lid(\C) \cong \LL', \ \ \ \rid(\C) \cong \LL', \ \ \ C(\C)\cong E_F, \ \mbox{ and } \ Z(\C) \cong \K;
        \]  
\end{theorem}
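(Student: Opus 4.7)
The plan is to follow the same strategy as the proof of Proposition \ref{prop:idealiserl>1}, adapted to the structural differences of $D_{n,s\ell,k}(\gamma,F)$: the constant term lies in $\LL'$ (not all of $\LL$), the twist term at $x^{sk\ell}$ is a $\LL'$-multiple of $\gamma$, and multiplication on the left/right by $x$ produces a $\sigma$-twist rather than a $\rho$-twist. The hypotheses $1\le k\le m/2$ and $sk\ell\ge 2$ will ensure that all degree shifts performed during the argument stay strictly below $ns=\deg F(x^n)$, so no reduction modulo $F(x^n)$ is triggered. Throughout I would exploit that $\C$ contains the elements $1,x,\dots,x^{sk\ell-1}$ (by taking a single middle coefficient equal to $1$) and $\gamma x^{sk\ell}$ (by taking $a_0''=1$).

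For $\lid(\C)$, the fact that $1\in\C$ forces any $g\in\lid(\C)$ to lie in $\C$, so $g=g_0'+\sum_{i=1}^{sk\ell-1}g_ix^i+\gamma g_0''x^{sk\ell}$ with $g_0',g_0''\in\LL'$. Multiplying by $x^j$ for $j=1,2,\ldots,sk\ell-1$ shifts $g$ to a polynomial of degree $sk\ell+j\le 2sk\ell-1<ns$, and demanding that $gx^j\in\C$ (i.e.\ degree $\le sk\ell$) kills successively the coefficients $g_0'',g_{sk\ell-1},g_{sk\ell-2},\ldots,g_2$. One further constraint from $g\cdot\gamma x^{sk\ell}\in\C$ eliminates $g_1$, leaving $g=g_0'\in\LL'$. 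Finally, $g_0'\cdot\gamma x^{sk\ell}=g_0'\gamma x^{sk\ell}\in\C$ forces $g_0'\gamma\in\gamma\LL'$, i.e.\ $g_0'\in\LL'$, giving $\lid(\C)\cong\LL'$.

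For $\rid(\C)$, the calculation is analogous but with left multiplication by $x$ producing $\sigma$-twists: $xg=\sum g_i^{\sigma}x^{i+1}$. Iterating the conditions $x^j\cdot g\in\C$ and $\gamma x^{sk\ell}\cdot g\in\C$ reduces $g$ to a constant $g_0'$, and $1\cdot g_0'\in\C$ forces $g_0'\in\LL'$. The final condition $\gamma x^{sk\ell}\cdot g_0'=\gamma(g_0')^{\sigma^{sk\ell}}x^{sk\ell}\in\C$ requires $(g_0')^{\sigma^{sk\ell}}\in\LL'$; since $\LL'=\Fix{\LL}{\sigma^t}$ corresponds to the (normal) subgroup $\langle\sigma^t\rangle\le\Gal(\LL/\K)$, the field $\LL'$ is $\sigma$-stable and this condition is automatic. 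Thus $\rid(\C)\cong\LL'$.

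For the centraliser, since $\C$ contains $x$ and the middle terms $ax$ for all $a\in\LL$, a $g\in C(\C)$ must commute with each such element. Writing $g=\sum_{i=0}^{ns-1}g_ix^i$, the relation $xg=gx$ yields $g_i^{\sigma}=g_i$, so $g_i\in\K$. Then $g(ax)=(ax)g$ becomes $\sum g_i(a^{\sigma^i}-a)x^{i+1}=0$, which (for $i<ns-1$ giving independent monomials, and for $i=ns-1$ using $\gcd(ns-1,n)=1$ to force $g_{ns-1}=0$) gives $g_i=0$ whenever $n\nmid i$. Hence $g\in\K[x^n]/(F(x^n))\cong E_F$; the reverse inclusion is immediate because $E_F$ is the centre of $R/RF(x^n)$. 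The centre $Z(\C)=\lid(\C)\cap C(\C)$ then corresponds to elements that are simultaneously constants in $\LL'$ and central polynomials in $x^n$ with coefficients in $\K$; the intersection, taken inside $R/RF(x^n)$, is $\LL'\cap\K=\K$.

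The main obstacle is purely bookkeeping: verifying that the iterative degree-killing argument does not need reductions modulo $F(x^n)$, which is exactly what the assumptions $k\le m/2$ and $sk\ell\ge 2$ guarantee. The mild conceptual point is the $\sigma$-stability of $\LL'$, which ensures the right-idealiser calculation does not introduce an extra Galois constraint (in contrast to Proposition \ref{prop:idealiserl>1}, where $\rho$ need not stabilise the relevant subfield).
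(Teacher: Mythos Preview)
Your proposal is correct and, for the idealisers and the centre, follows essentially the same route as the paper. The paper kills the higher coefficients of $g\in\lid(\C)$ in one shot by multiplying by $x^{sk\ell-1}$ rather than iterating over $x^j$, but this is a cosmetic difference; your use of $g\cdot\gamma x^{sk\ell}$ to eliminate $g_1$ is exactly what the paper does.

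For the centraliser you take a slightly different path. The paper first commutes $g$ with the constants $\alpha\in\LL'$ (forcing $g_i=0$ unless $t\mid i$) and then with a single element $\zeta x$ with $\zeta\notin\LL'$ to push $g$ into $\K[x^n]$. You instead use $x$ and $ax$ for all $a\in\LL$, which is cleaner: from $xg=gx$ you get $g_i\in\K$, and then $(ax)g=g(ax)$ for varying $a$ immediately gives $g_i=0$ unless $n\mid i$. Both arguments land in $E_F$, but yours avoids the two-step filtration through $\LL[x^t]$.

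One small caveat: your blanket claim that ``no reduction modulo $F(x^n)$ is triggered'' is not literally true in the centraliser step, since $g$ may have degree $ns-1$ and then $xg-gx$ has degree $ns$. This is harmless: any element of $RF(x^n)$ of degree $\le ns$ is a scalar multiple of $F(x^n)$ and hence has nonzero constant term (as $F_0\ne 0$), while $xg-gx$ and $(ax)g-g(ax)$ have zero constant term, so they must vanish in $R$. You should make this explicit rather than relying on the degree bound.
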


\begin{proof}
    Let start by computing $\lid(\C)$. Let $g(x)=g_0+g_1x+\cdots g_{sn-1}x^{sn-1} \in R$ such that $g(x) \in \lid(\C) \setminus \{0\}$, i.e. 
    \[g(x)h(x) \in \C,\] for every $h(x) \in \C$. Since $1 \in \C$, then $g(x) \in \C$ and $\deg(g(x))\leq sk\ell$, i.e. $g_{i}=0$, for any $i \in \{sk\ell+1,\ldots,sn-1\}$. Moreover, since $sk\ell\geq 2$, we have $x^{sk\ell-1} \in \C$, and so 
    
    \begin{equation} \label{eq:condidealtromb}
        g(x) x^{sk\ell-1}=g_0x^{sk\ell-1}+g_1x^{sk\ell}+\cdots g_{sk}x^{2sk\ell-1} \in \C.
     \end{equation}
    
    Since $2sk\ell-1 \leq sn-1$,we have $g_2=\cdots=g_{sk\ell-1}=0$ and $\deg(g(x)) \leq 1$. Finally, using that $\gamma x^{sk\ell} \in \C$, we get that $g_0\gamma x^{sk}+g_1 \sigma(\gamma)x^{sk\ell+1} \in \C$ and since $sk+1<sn$, we have that $g_1=0$ and $g_0 \in \LL'$. This means that
\[
\lid(\C)=\{g_0 + RF(x^n) \colon g_0 \in \LL'\} \cong \LL',
\]
and hence the left idealiser is as claimed. The proof for the right idealiser is similar. \\
Now, we determine the centraliser $C(\C)$. To this aim, let $g(x)=g_0+g_1x+\cdots g_{sn-1}x^{sn-1} \in R/RF(x^n)$ such that $g(x) + RF(x^n) \in C(\C) \setminus \{0\}$, i.e. 
    \[g(x)h(x)=h(x)g(x),\] for every $h(x) \in \C$.
For any $\alpha \in \LL'$, we have that $\alpha \in \C$ and $\alpha g(x)- g(x)\alpha \in RF(x^n)$. Since $\deg(\alpha g(x)- g(x)\alpha) <ns$, we get $\alpha g(x)- g(x)\alpha=0$ in $R$. Therefore, $g(x) \in \LL[x^t,\sigma]$ and $\deg(g(x))<ns-t\leq ns-2$. Furthermore, let $\zeta\in \LL \notin \LL'$. We have that $\zeta xg(x)-g(x) \zeta x \in RF(x^n)$. But as $\deg(g(x))\leq ns-2$, we must have $\zeta xg(x)-g(x)\zeta x$ in $R$ and so $g(x) \in \K[x^n]=Z(R)$. This implies that $C(\C)=E_F$.
Finally, $Z(\C')=\lid(\C) \cap C(\C) \cong E_F \cap \{g_0 + RF(x^n) \colon g_0 \in \LL'\}$, completing the proof.
\end{proof}

The nuclei and the centre of the division algebras defined in \Cref{th:divisionexthughes} are the following.

\begin{theorem}\label{th:righthughesinfinite}
     Consider the unital division algebra $\A=(R/Rf,\star_D)$ defined as in \Cref{th:divisionexthughes}.
    Then 
        \[
        \NN_l(\A) \cong \LL', \ \ \ \NN_m(\A) \cong \LL', \ \ \ \NN_r(\A)\cong \mathcal{E}(f), \ \mbox{ and } \ Z(\A) \cong \K;
        \]  
\end{theorem}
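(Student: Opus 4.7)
The plan is to follow the strategy from the proof of \Cref{th:righttwistedinfinite}: reduce the left nucleus, middle nucleus, and centre of $\A$ to the corresponding invariants of the spread set $\C(\A)\subseteq \End_{\mathcal{E}(f)}(R/Rf)$ via \Cref{prop:nuc}, and compute the right nucleus by a direct calculation. By \Cref{lm:divisionexthughes}, $\A=(R/Rf,\star_D)$ has spread set $\C(\A)=\{L_a : a\in \A\}=\phi_D(R/Rf)=D_{n,s\ell,1}(-\gamma/f_0,F)$, viewed in $\End_{\mathcal{E}(f)}(R/Rf)$ through the canonical isomorphism \eqref{eq:canonicalisomorphism}. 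Since $\A$ is unital (\Cref{th:divisionexthughes}), combining \Cref{prop:nuc} with \Cref{th:idealisersexttromb} applied with $k=1$ (which requires $s\ell\geq 2$; note that $m\geq 2$ because $f\notin Z(R)$) immediately gives $\NN_l(\A)\cong \LL'$, $\NN_m(\A)\cong \LL'$, and $Z(\A)\cong \K$.

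For the right nucleus we compute directly. Given $a,b,c\in R/Rf$, write $\phi_D(a)\cdot b=uf+v$ with $\deg v<s\ell$, so that $a\star_D b=v$. A short manipulation in $R$, analogous to the one performed for $\star_{S'}$ in the proof of \Cref{th:righttwistedinfinite}, yields
\[
(a\star_D b)\star_D c-a\star_D(b\star_D c)\equiv -ufc-\frac{\gamma}{f_0}v_0''fc+\phi_D(a)\cdot\frac{\gamma}{f_0}b_0''\cdot fc \pmod{Rf}.
\]
The inclusion $\mathcal{E}(f)\subseteq \NN_r(\A)$ is then immediate: if $c\in \mathcal{E}(f)$ then $fc\in Rf$, and each of the three terms on the right lies in $R\cdot Rf\subseteq Rf$.

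For the reverse inclusion, specialise to $a=yx$ with $y\in \LL^{*}$ and $b=\gamma$. Then $a_0''=0$, $\phi_D(a)=yx$, and $\phi_D(a)\cdot b=y\gamma^{\sigma}x$ has degree $1<s\ell$, so $u=0$ and $v_0''=0$, while $b_0''=1$. The displayed congruence collapses to $xfc\in Rf$. Since $f$ is irreducible of degree $s\ell\geq 2$ and therefore has nonzero constant coefficient, the $\sigma$-semilinear map $\phi_f\colon v\mapsto xv$ on $R/Rf$ is invertible (its matrix is $C_f$ composed with $\sigma$, with $\det C_f=\pm f_0\neq 0$), so $xfc\in Rf$ forces $fc\in Rf$, that is $c\in \mathcal{E}(f)$. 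The main obstacle is identifying the specialisation of $(a,b)$ that isolates the condition $fc\in Rf$; once this is in place, invertibility of $\phi_f$ closes the argument, and the remaining identifications follow cleanly from the preceding idealiser computations in \Cref{th:idealisersexttromb}.
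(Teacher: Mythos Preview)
Your proof is correct and follows essentially the same strategy as the paper: reduce the left and middle nuclei and the centre to the idealiser computation in \Cref{th:idealisersexttromb} via \Cref{prop:nuc}, and for the right nucleus write out the associator and specialise $(a,b)$ to force $fc\in Rf$. The only substantive difference is the choice of specialisation: the paper takes $a=x$, $b=x^{s\ell-1}$, which yields $(1-\gamma f_0''/f_0)\,fc\equiv 0$ and then invokes the hypothesis $\gamma/f_0\notin\LL'$ to see that the scalar is nonzero; you take $a=yx$, $b=\gamma$, which yields $xfc\equiv 0$ and then uses injectivity of $\phi_f$ (equivalently $f_0\neq 0$) to conclude. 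Both routes are valid and of comparable length.

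One small quibble: your parenthetical ``note that $m\geq 2$ because $f\notin Z(R)$'' is true but does not by itself imply $s\ell\geq 2$; the condition $s\ell\geq 2$ is needed both to apply \Cref{th:idealisersexttromb} and for your claim that $\deg(y\gamma^{\sigma}x)=1<s\ell$, and it is an implicit standing assumption here (the paper's proof relies on it equally).
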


\begin{proof}
Let $\C=\C(\A) \subseteq \End_{\K'}(R/Rf)$ be the spread set of $\A$. First, we determine the left and right nucleus of $\A$. As in the proof of \Cref{th:righttwistedinfinite}, we get that \[\NN_l(\A) \cong \{\phi \in R/RF(x^n) \colon \phi \circ \varphi \in \C, \mbox{ for every }\varphi \in \C\}.\]
Thus, the results follow from \Cref{th:idealisersexttromb}. The same arguments apply for the calculation of the middle nucleus of $\A$.
Now, let
\[
a=(a_0'+\gamma a_0'')+\sum_{i=1}^{s\ell-1} a_ix^i ,b= (b_0'+\gamma b_0'')+\sum_{i=1}^{s\ell-1} b_ix^i\in \frac{R}{Rf},
\]
with $a_0',a_0'',b_0',b_0'' \in \LL'$
and let $c \in R/Rf$. Let $u,v,w,z \in R$ be the unique elements such that 
\[
\begin{array}{rcl}
    a \star_D b&= \left((a_0'+\gamma a_0'')+\sum_{i=1}^{s\ell-1} a_i x^i - \frac{\gamma}{f_0} a_0'' f(x) \right)b =&  uf +v \\
   b \star_D c & = \left((b_0'+\gamma b_0'')+\sum_{i=1}^{s\ell-1} b_ix^i- \frac{\gamma}{f_0} b_0'' f(x) \right)c = & wf +z.
\end{array}
\]
Let $v=(v_0'+v_0''\gamma)+\sum_{i=1}^{s\ell-1}v_ix^i$. So, we have that 
\begin{equation} \label{eq:hughes(ab)c}
\begin{array}{rl}
    (a \star_D b) \star_D c & = \left(v-\gamma\frac{v_0''}{f_0}f\right)c \ \mod f \\
    & = \left(\left( ab-\gamma \frac{a_0''}{f_0}fb \right)-uf-\gamma\frac{v_0''}{f_0}f\right)c \ \mod f \\
    \end{array}
\end{equation}
On the other hand,
\begin{equation} \label{eq:hughesa(bc)}
\begin{array}{rl}
    a \star_D (b \star_D c) & = \left( a-\gamma \frac{a_0''}{f_0}f \right) \left(\left(bc-\gamma \frac{b_0''}{f_0}fc  \right) -wf \right) \mod f\\
    \end{array}
\end{equation}
Therefore, $c \in \NN_r(\mathbb{A})$, i,e. $(a \star_D b) \star_D c=a \star_D (b \star_D c)$ if and only if, by using \eqref{eq:hughes(ab)c} and \eqref{eq:hughesa(bc)},  
\begin{equation} \label{eq:conditionequivrighthughes}
 ufc+\gamma \frac{v_0''}{f_0}fc = a \gamma \frac{b_0''}{f_0}fc + \gamma \frac{a_0''}{f_0}f \gamma \frac{b_0''}{f_0}fc  \ \ \mod f.
\end{equation}
Clearly, if $c \in \mathcal{E}(f)$, then $fc=0 \ \mod f$ and so \eqref{eq:conditionequivrighthughes} is satisfied, implying that $\mathcal{E}(f) \subseteq \NN_r(\A)$. Conversely, if $c \in \NN_r(\A)$, \eqref{eq:conditionequivrighthughes} holds for any $a,b \in R/Rf$. So, let  $a=x$ and $b=x^{s\ell-1}$. Then $u=1$, $v=-\sum_{i=0}^{s \ell-1}f_ix^i, a_0''=b_0''=0$. So, \eqref{eq:conditionequivrighthughes} reads as 
\[
 \left(1-\gamma \frac{f_0''}{f_0}\right)fc = 0 \ \ \mod f.
\]
Now, note that $\left(1-\gamma \frac{f_0''}{f_0}\right) \neq 0$, otherwise $f_0'=0$ and so $\gamma/f_0=1/f_0'' \in \LL'$, a contradiction to our hypotheses. Therefore, $fc =0 \ \ \mod f$, and so $c \in \mathcal{E}(f)$.
\end{proof}

\subsection{Comparison} \label{sec:comparison}

In this section, we provide explicit constructions of two infinite families of nonassociative division algebras whose right nucleus is a proper central division algebra and is isomorphic to the eigenring of $\mathcal{E}(f)$. This problem for instance has been considered in \cite{pumpluen2018algebras}, where it is used the derivation ring $\LL[x;\delta]$, whereas here we provide new examples arising from a skew polynomial ring with $\sigma \neq id$. Moreover, we prove that these algebras are not isotopic to each other, nor to that of Petit.

Consider the setting of Section \ref{sec:Javier}, and the skew polynomial ring $\LL[x;\sigma]$, with $\LL=\F_{2^{r}}(t)$ and $\K=\F_2(s)$. The polynomial $f(x)=x^2+\frac{t^2+1}{t^2+t+1}$ is irreducible in $\LL[x;\sigma]$, and 
\[
F(y)=y+\left(\frac{t^2+1}{t^2+t+1}\right)^{r}.
\]
is such that $f^*(x)=F(x^n)$. So, in this case we have $\ell_F=2$, and $s=1$. \\

By making use of this framework, together with the results provided in \Cref{th:newalgebrasl>1} and \Cref{th:righttwistedinfinite}, we obtain the first family.

\begin{theorem} \label{th:newalgebrasJavier1}
 Let $\eta \in \LL$ such that $\N_{\LL/\K}(\eta f_0) \neq 1$, where $f_0$ is the constant coefficient of $f$. In $R/Rf$, the operation 
    \[
    \left( (a_0-\eta a_0f_0)+ a_1x \right) \star_S (b_0+b_1x):= \left((a_0-\eta a_0f_0)+ a_1x+\eta a_0 f(x) \right) (b_0+b_1x),
    \]
    defines a division algebra over $\K$, having dimension $2n$. Moreover, any unital division algebras isotopic to $(R/Rf,\star_{S})$ has parameters
    \[
        \NN_l(\A) \cong \LL, \ \ \ \NN_m(\A) \cong \F_2(t), \ \ \ \NN_r(\A) \cong \mathcal{E}(f), \ \mbox{ and } \ Z(\A) \cong \F_2(s).
        \]
\end{theorem}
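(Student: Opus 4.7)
The plan is to specialize Theorems \ref{th:newalgebrasl>1} and \ref{th:righttwistedinfinite} to the concrete setup of Section \ref{sec:Javier}, and translate the abstract parameters ($\ell_F$, $m$, $\rho$, $\K'$) into the explicit fields named in the statement. I would first fix $\rho = \mathrm{id}_\LL$, dictated by the absence of any twist of $a_0$ in $\star_S$; then $\Fix{\LL}{\rho} = \LL$ and $\K' = \LL \cap \K = \K = \F_2(s)$. From Section \ref{sec:Javier}, $f$ is monic, irreducible and not central, with $\deg f = 2$, bound $f^* = F(x^n)$ of degree $\deg F = 1$, and $F(x^n)$ splitting into $m = r$ irreducible factors, so $\ell_F = n/m = 2$ and $\deg f = \deg F \cdot \ell_F$. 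Theorem \ref{th:newalgebrasl>1} applied with $k = 1$ then gives that $(R/Rf, \star_S)$ is a division algebra over $\K$ of dimension $m \ell_F^2 \deg F \cdot [\K:\K'] = r \cdot 4 \cdot 1 \cdot 1 = 2n$, settling the first claim; the condition $\N_{\LL/\K}(\eta f_0) \neq 1$ matches the non-degeneracy hypothesis of Theorem \ref{th:newalgebrasl>1} via part $(a)$ of Theorem \ref{th:normskew}.

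For the nuclei I would invoke the concrete unital isotope $(R/Rf, \star_{S'})$ furnished by Theorem \ref{th:twistedunital}. By Propositions \ref{prop:nuc} and \ref{prop:idequiv}, the idealisers, centraliser and centre attached to a unital division algebra are isomorphism invariants of the isotopy class of the associated spread set, so any two unital isotopes of $(R/Rf, \star_S)$ have isomorphic nuclei and centre. Theorem \ref{th:righttwistedinfinite} then yields $\NN_l(\A) \cong \Fix{\LL}{\rho} = \LL$, $\NN_m(\A) \cong \Fix{\LL}{\rho^{-1} \circ \sigma^{\deg F \cdot \ell_F}} = \Fix{\LL}{\sigma^2}$, $\NN_r(\A) \cong \mathcal{E}(f)$, and $Z(\A) \cong \K' = \F_2(s)$.

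It remains to identify $\Fix{\LL}{\sigma^2}$ as $\F_2(t)$. Since $\sigma = \theta \circ \tau = \tau \circ \theta$ and $\theta^2 = \mathrm{id}$, one has $\sigma^2 = \tau^2$. With $r$ odd, $\gcd(2, r) = 1$ forces $\langle \tau^2 \rangle = \langle \tau \rangle$ inside $\Gal(\F_{2^r}/\F_2)$; the componentwise action of $\tau^2$ on $\F_{2^r}(t)$ therefore fixes exactly the rational functions with $\F_2$-coefficients, i.e., $\F_2(t)$, matching the stated $\NN_m(\A) \cong \F_2(t)$ and closing the circle of identifications.

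The main obstacle I foresee is that Theorem \ref{th:righttwistedinfinite} (and its underlying Proposition \ref{prop:idealiserl>1}) is stated under the hypothesis $\deg F \cdot \ell_F > 2$, whereas here $\deg F \cdot \ell_F = 2$ sits exactly on the boundary. The proof of Proposition \ref{prop:idealiserl>1} uses that both $gx \mod F(x^n)$ and $gx^2 \mod F(x^n)$ lie in $\C$ in order to force $g_{ns-1} = 0$; when $sk\ell = 2$ the element $x^2$ is no longer in $\C$, so this step must be re-done. I would replace $gx^2 \in \C$ by $g(1 + \eta x^2) \in \C$, a genuine element of $\C$: expanding modulo $F(x^n) = x^n + F_0$ and reading off the coefficient of $x^3$ gives $\eta^{1 + \sigma} F_0 g_{n-1} = 0$, hence $g_{n-1} = 0$ as before, after which the remainder of the argument of Proposition \ref{prop:idealiserl>1}, and the downstream right-nucleus calculation of Theorem \ref{th:righttwistedinfinite} (using $a = x$ in place of $a = x^2$, since $x^2$ becomes a constant in $R/Rf$ when $\deg f = 2$), adapts verbatim.
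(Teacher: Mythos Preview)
Your strategy—specialize Theorems \ref{th:newalgebrasl>1} and \ref{th:righttwistedinfinite} (via the unital isotope of Theorem \ref{th:twistedunital}) to the ring of Section \ref{sec:Javier} with $\rho=\mathrm{id}$—is exactly how the paper obtains this statement, and your identifications $\K'=\K=\F_2(s)$ and $\Fix{\LL}{\sigma^2}=\F_2(t)$ (using $\sigma^2=\tau^2$ and $\langle\tau^2\rangle=\langle\tau\rangle$ for $r$ odd) are correct. You are in fact more careful than the paper in flagging that the standing hypothesis $s\ell>2$ of Proposition \ref{prop:idealiserl>1} and Theorem \ref{th:righttwistedinfinite} fails in this example ($s=1$, $\ell=2$), and your repair of the left/right idealiser computation—replacing the unavailable test element $gx^2$ by $g(1+\eta x^2)\in\C$—is valid.

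The right-nucleus patch, however, does not go through as written. In the proof of Theorem \ref{th:righttwistedinfinite} the choice $a=x^2$ is made precisely because $x^2Z\equiv x\bmod f$ yields $v'=x$ and hence $v_0'=0$; it is this vanishing that collapses \eqref{eq:conditionequivrighttwisted} to $x\eta t_0^\rho fZc\equiv 0\bmod f$ and forces $Zc\in\mathcal{E}(f)$. With $a=x$ one has instead $xZ\equiv 1\bmod f$, so $v'=1$ and $v_0'=t_0\ne 0$; the surviving term $\eta v_0'^\rho fZc$ then cancels against part of the right-hand side, and after simplifying (write $xZ=1+hf$) one is left only with $hf(1+\eta t_0 f)Zc\equiv 0\bmod f$, which does not by itself pin down $Zc$. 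Since every element of $R/Rf$ with $a_0=0$ is an $\LL$-multiple of $x$ when $s\ell=2$, no alternative choice of $a$ restores $v_0'=0$, so the reverse inclusion $\NN_r(\A)\subseteq x\mathcal{E}(f)$ genuinely needs a separate argument in this boundary case rather than a verbatim adaptation.
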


Now let $\LL'=\F_{2^r}(s)$ and note that $[\LL:\LL']=2$.  Together with the results provided in \Cref{th:divisionexthughes} and \Cref{th:righthughesinfinite}, we obtain the second family.

\begin{theorem} \label{th:newalgebrasJavier2}
 Let $\gamma \in \LL$ such that $\gamma/f_0 \in \LL \setminus \LL'$ and $\N_{\LL/\K}(\gamma)$ is not a square in $\K$, where $f_0$ is the constant coefficient of $f$. In $R/Rf$, the operation 
    \[
    \begin{array}{rl}
         a \star_D b & =  \left((a_0'+\gamma a_0'')+a_1x \right)\star_D (b_0+b_1x) \\
         &:= \left((a_0'+\gamma a_0'')+a_1x - \frac{\gamma}{f_0} a_0'' f(x) \right) (b_0+b_1x)
    \end{array}
    \]
    for any $a_i,b_i \in \LL$ and $a_0',a_0'' \in \LL'$, defines a unital division algebra over $\K$ with unity 1, having dimension $2n$. Moreover, $(R/Rf,\star_{D})$ has parameters
      \[
        \NN_l(\A) \cong \LL', \ \ \ \NN_m(\A) \cong \LL', \ \ \ \NN_r(\A)\cong \mathcal{E}(f), \ \mbox{ and } \ Z(\A) \cong \K;
        \]  
\end{theorem}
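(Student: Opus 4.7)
The plan is to deduce the result as a direct application of Theorems~\ref{th:divisionexthughes} and~\ref{th:righthughesinfinite} after verifying that the concrete setting of Section~\ref{sec:Javier} fits into the framework of Section~\ref{sec:secondfamily}. The first step is to identify the intermediate subfield $\LL'$ with $[\LL:\LL']=2$: since $\Gal(\LL/\K)=\langle \sigma \rangle$ is cyclic of order $n=2r$ with $r$ odd, its unique subgroup of index two is $\langle \sigma^r \rangle$. Using $\tau^r=\mathrm{id}$ and the commutativity of $\theta$ and $\tau$, one computes $\sigma^r=\theta^r\tau^r=\theta$, so the fixed field of $\langle \sigma^r\rangle$ equals $\Fix{\LL}{\theta}=\F_{2^r}(s)=:\LL'$, by an argument analogous to Lemma~\ref{lm:fixedfunctionfield}. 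Hence $\LL/\LL'/\K$ is the required tower.

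Next, I would check the hypotheses on $f$ in Theorem~\ref{th:divisionexthughes}. Section~\ref{sec:Javier} establishes that $f(x)=x^2+f_0$ with $f_0=(t^2+1)/(t^2+t+1)$ is monic and irreducible in $R$ with bound $F(x^n)$, where $F\in\K[y]$ is irreducible of degree $s=1$, and $F(x^n)$ factors into $m=r$ irreducibles in $R$. Thus $\ell_F=n/m=2$, so $\deg(f)=2=s\ell_F$ and $f\notin Z(R)$, matching the setup of Theorem~\ref{th:divisionexthughes} exactly. Applying that theorem gives the unital division-algebra structure $(R/Rf,\star_D)$ over $\K$ with unity $1$ and $\K$-dimension $ns\ell_F=4r=2n$.

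For the nuclei, Theorem~\ref{th:righthughesinfinite} applies verbatim and yields $\NN_l(\A)\cong\LL'$, $\NN_m(\A)\cong\LL'$, $\NN_r(\A)\cong\mathcal{E}(f)$ and $Z(\A)\cong\K$. Combined with Theorem~\ref{th:isomorphismtheoremeigen}, which identifies $\mathcal{E}(f)$ as a central division algebra of degree $\ell_F=2$ over $E_F\cong\K$, one obtains the desired conclusion that the right nucleus of $\A$ is a genuine quaternion division algebra over its centre, not a field; this is precisely the feature that distinguishes these examples from the Petit-type constructions discussed in Section~\ref{sec:comparison}.

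The only nontrivial verification is to confirm that the standing hypothesis of the theorem is not vacuous, i.e.\ that an admissible $\gamma$ exists. I would propose $\gamma=t+1$: since $\sigma^{2j}(t+1)=t+1$ and $\sigma^{2j+1}(t+1)=(t+1)/t$, and since $(t+1)^2/t=s$, one has
\[
\N_{\LL/\K}(t+1)=(t+1)^r\cdot\bigl((t+1)/t\bigr)^r=\bigl((t+1)^2/t\bigr)^r=s^r,
\]
which is a non-square in $\K=\F_2(s)$ because $r$ is odd and $s$ is transcendental over $\F_2$. An analogous direct computation shows $(t+1)/f_0=(t^2+t+1)/(t+1)$ is not fixed by $\theta$, hence lies in $\LL\setminus\LL'$. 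This existence check is the main (albeit modest) obstacle in the proof; everything else reduces to invocations of the general theorems of Section~\ref{sec:secondfamily}.
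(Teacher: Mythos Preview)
Your proposal is correct and mirrors exactly what the paper does: it simply specialises Theorems~\ref{th:divisionexthughes} and~\ref{th:righthughesinfinite} to the explicit data of Section~\ref{sec:Javier} (with $s=1$, $\ell_F=2$, $\LL'=\F_{2^r}(s)$), and the existence of an admissible $\gamma$ via $\gamma=t+1$ is precisely the content of the example immediately following the theorem. One tiny slip: you call $\langle\sigma^r\rangle$ the ``subgroup of index two'', but it is the subgroup of \emph{order} two (index $r$); your conclusion $\LL'=\Fix{\LL}{\sigma^r}=\F_{2^r}(s)$ is nonetheless the correct one, since $[\LL:\LL^H]=|H|$ under the Galois correspondence.
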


\begin{example}
    Let $\gamma=1+t$. We prove that this choice of $\gamma$ satisfy the hypothesis of \Cref{th:newalgebrasJavier2}. Recall that $f_0=\frac{t^2+1}{t^2+t+1}$, so we have that 
    \[
    \frac{\gamma}{f_0}=\frac{t^2+t+1}{t+1} \in \LL.
    \]
    Since $\theta\left(\frac{t^2+t+1}{t+1}\right)= \frac{\gamma}{tf_0}$, we have that $\frac{\gamma}{f_0} \notin \LL'$. Moreover, 
    \[
    \N_{\LL/\K}(\gamma)=\left((1+t) \cdot \frac{t+1}{t} \right)^r=\frac{(1+t)^{2r}}{t^r},
    \]
    so $\N_{\LL/\K}(\gamma)$ cannot be a square in $\K$, otherwise $t^r$ would have to be a square, which is not possible since $r$ is odd. Thus, with this choice of $\gamma$, by \Cref{th:newalgebrasJavier2}, we obtain a unital division algebra over $\K$.
\end{example}

We now prove that the algebras constructed in \Cref{th:newalgebrasJavier1} and \Cref{th:newalgebrasJavier2} are not isotopic and  or to that of Petit.

\begin{theorem}
    The algebras $(R/Rf,\star_S)$ and $(R/Rf,\star_D)$ are  both nonassociative division algebras whose right nucleus is a central division algebras having degree $2$, which are not $\K$-isotopic to each other. Moreover, they are not $\K$-isotopic to the Petit algebra. 
\end{theorem}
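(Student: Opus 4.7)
My plan is to break the assertion into three essentially separate tasks, each of which follows from the structural results already at our disposal.

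\emph{Division property and nonassociativity.} The division property is immediate from Theorems~\ref{th:newalgebrasJavier1} and \ref{th:newalgebrasJavier2}, once we confirm the hypotheses on $\eta$ and $\gamma$ can be met (an explicit admissible $\gamma$ was exhibited just before the statement; for $\star_S$ any $\eta\in\LL$ with $\N_{\LL/\K}(\eta f_0)\neq 1$ works, and such $\eta$ plainly exist). For nonassociativity I would use the standard fact that an algebra $\A$ is associative if and only if $\NN_r(\A)=\A$. Here $R/Rf$ has $\K$-dimension $n\,\deg(f)=2n=4r$, whereas the right nucleus has $\K$-dimension $\ell_F^2 s=4$; since $r\geq 3$ the containment $\NN_r(\A)\subsetneq\A$ is proper, and both algebras are nonassociative.

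\emph{Structure of the right nucleus.} By Theorem~\ref{th:isomorphismtheoremeigen}, $\mathcal{E}(f)$ is a central division algebra over $E_F$ of degree $\ell_F=2$. Since the bound $F(y)$ has degree $s=1$ over $\K$, the field $E_F\cong \K[y]/(F(y))$ coincides with $\K=\F_2(s)$. Combined with Theorems~\ref{th:newalgebrasJavier1} and \ref{th:newalgebrasJavier2}, this identifies the right nucleus in each case as a central division algebra of degree $2$ over $\K$; in particular it is genuinely noncommutative, i.e., not a field.

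\emph{Pairwise non-isotopy.} Nuclei are invariants of the $\K$-isotopy class of a unital algebra. For the non-unital $(R/Rf,\star_S)$ I would pass to the unital isotope provided by Theorem~\ref{th:twistedunital}, whose nuclei are recorded in Theorem~\ref{th:newalgebrasJavier1}. The comparison is then purely dimensional: the middle nucleus of the Petit algebra $S_f$ is $\LL$ of $\K$-degree $2r$; that of the unital isotope of $\star_S$ is $\F_2(t)$ of $\K$-degree $2$; and that of $(R/Rf,\star_D)$ is $\LL'=\F_{2^r}(s)$ of $\K$-degree $r$. Since $r\geq 3$ is odd, the three values $2r$, $2$, $r$ are pairwise distinct, so no two of the middle nuclei can be $\K$-isomorphic. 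Consequently the three algebras are pairwise not $\K$-isotopic.

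The only real delicacy in this plan is invariance of the nuclei under isotopy, which is standard for \emph{unital} algebras; this is exactly why the unital isotope of Theorem~\ref{th:twistedunital} must be invoked when dealing with $\star_S$. Beyond that subtlety, the whole argument amounts to bookkeeping on $\K$-dimensions of nuclei that have already been computed in the paper, so I expect no serious technical obstacle.
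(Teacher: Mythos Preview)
Your proposal is correct and follows essentially the same strategy as the paper: identify the right nucleus as the eigenring $\mathcal{E}(f)$ (a degree-$2$ central division algebra over $E_F\cong\K$) and then distinguish the three algebras by comparing the $\K$-dimensions of suitable nuclei, which are isotopy invariants. The only minor differences are that the paper compares \emph{left} nuclei to separate $\star_S$ from $\star_D$ (using $[\LL:\K]=n\neq n/2=[\LL':\K]$) and then invokes middle nuclei for the comparison with Petit, whereas you uniformly use middle nuclei for all three comparisons; you are also more explicit than the paper about passing to the unital isotope of $\star_S$ before invoking nucleus invariance, and about deducing nonassociativity from $\dim_\K\NN_r(\A)<\dim_\K\A$.
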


\begin{proof}
By \Cref{th:righttwistedinfinite} and \Cref{th:righthughesinfinite}, we know that the right nucleus of $(R/Rf,\star_S)$ and $(R/Rf,\star_D)$ is isomorphic to the eigenring $\mathcal{E}(f)$ of $f$. This turns out to be a central division algebra over $E_F\simeq \frac{\F_2(s)[y]}{(F(y))} \simeq \F_2(s)$ having degree $\ell_F=n/m=2$. Assume, now that $\A_1=(R/Rf,\star_S)$ and $\A_2=(R/Rf,\star_D)$ are isotopic. Then the respective nuclei are $\K$-isomorphic. So $\NN_l(\A_1)\cong \LL$ would be $\K$-isomorphic to $\NN_l(\A_2) \cong \LL'$. Hence, $n=[\LL:\K]=[\LL':\K]=n/2$, a contradiction. In the same spirit, by comparing the middle nuclei of the algebras $\A_1$, $\A_2$ and those of Petit, we get the assertion.
\end{proof}
 
\section{Establishing the newness of our second family over finite fields} \label{Sec:finitefieldscase}


In this section, we will concentrate on the finite field case. We will explicitly describe the MRD codes/semifields contructed together with their parameters. We show that $D_{n,s,k}(\gamma,F)$ properly contains the family of Trombetti-Zhou codes and Hughes-Kleinfeld semifields. Moreover, we also show that the family $D_{n,s,k}(\gamma,F)$ contains new MRD codes for infinite parameters and new semifields. Recall that in finite field case, we have $n=m$ (and $\ell_F=1$) and $\mathcal{E}(f)\cong E_F$, cf. \Cref{th:basicskewpolynomial}. This also means that 
\[
\frac{R}{RF(x^n)} \cong M_n(E_F) \cong M_n\left(\F_{q^{\deg(F)}}\right).
\]
\\

As there are many more constructions of finite semifields than there are of MRD codes in general, we split the consideration of newness into the cases $K>1$ and the case $k=1$, i.e. the division algebra/semifield case.

\subsection{Newness of MRD codes for $k>1$}  

Recall the known MRD codes listed in Section \ref{ssec:knownmrd}. We note also that there exist notions of the {\it adjoint} (or transpose) and {\it dual} of an MRD code. We will not go into detail here; all that is relevant to us for now is that the parameters of the adjoint and dual codes follow immediately from the parameters of the code. We refer to \cite[Proposition 4.2]{lunardon2018nuclei} for details.

Now, we explicitly describe our construction for the finite field case. Choosing $\LL=\F_{q^n}$, $\LL'=\F_{q^t}$ and $\K=\F_q$, from \Cref{th:extendtrombmrd}, we get the following construction of MRD codes in $M_n(\F_{q^s})$.
\begin{theorem}
Assume that $q$ be an odd prime power. Let $n=2t \geq 4$ and $\sigma$ be a generator of the Galois group $\Gal(\F_{q^n}/\F_q)$ and consider $R=\F_{q^n}[x;\sigma]$. Assume that $F(y) \neq y$ is a monic irreducible monic polynomial in $\F_q[y]$ having degree $s$ and let $E_F \cong \frac{\F_q[y]}{(F(y))} \cong \F_{q^s}$ and $R_F\cong M_n(\F_{q^s})$. For a positive integer $k<n$, the set
\begin{equation} \label{eq:finiteextensiontrombzhou}
D_{n,s,k}(\gamma,F)=\left\{ a_0'+\sum_{i=1}^{sk-1} a_i x^i + \gamma a_0'' x^{sk} +RF(x^n) \colon a_i \in \F_{q^n}, a_0',a_0'' \in \F_{q^t} \right\} \subseteq \frac{R}{RF(x^n)},
\end{equation}
defines an $\F_q$-linear MRD code in $R_F \cong M_{n}(\F_{q^s})$ with minimum distance $n-k+1$ and dimension $nsk$ for any $\gamma \in \F_{q^n}$ such that $(-1)^{ks}F_0^{k}\N_{\F_{q^n}/\F_q}(\gamma)$ is not a square in $\F_q$.
\end{theorem}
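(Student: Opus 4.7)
The statement is essentially a direct specialization of the previously proven \Cref{th:extendtrombmrd} to the finite field setting, so the plan is to verify that the hypotheses translate correctly and then invoke that theorem. The only substantive work is tracking parameters.

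First I would record the relevant consequences of the finite field assumption. By part (i) of \Cref{th:basicskewpolynomial}, the fact that $\K = \F_q$ is finite forces $\ell_F = 1$, whence $m = n$ and every irreducible factor $f$ of $F(x^n)$ in $R$ has degree exactly $s$. In particular $sk\ell = sk$, so the definition \eqref{eq:finiteextensiontrombzhou} matches the general definition \eqref{eq:extensiontrombzhou} with $\ell = 1$. Since $n \geq 4 > 1$, $f$ has degree $s < sn = \deg F(x^n)$ and therefore $f \notin Z(R)$, verifying that hypothesis of \Cref{th:extendtrombmrd}. The hypothesis $k < m$ becomes $k < n$, which is given.

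Second, I would confirm that the arithmetic hypothesis on $\gamma$ matches. In \Cref{th:extendtrombmrd} one needs $(-1)^{sk\ell}F_0^{k\ell}\N_{\LL/\K}(\gamma)$ not to be a square in $\K$, and with $\ell = 1$ this is precisely the stated condition $(-1)^{sk}F_0^k \N_{\F_{q^n}/\F_q}(\gamma) \notin (\F_q^*)^2$. Moreover, the subfield $\LL' = \F_{q^t}$ of index $2$ in $\LL = \F_{q^n}$ exists because $n = 2t$; the fixed field of $\langle \sigma^t \rangle$ under the Galois correspondence is exactly $\F_{q^t}$. The requirement $q$ odd ensures that the ``non-square'' condition is nontrivial, so such $\gamma$ exist.

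Applying \Cref{th:extendtrombmrd} then yields that $D_{n,s,k}(\gamma,F)$ is an $\F_q$-linear MRD code in $R_F \cong M_n(\mathcal{E}(f)) \cong M_n(E_F) \cong M_n(\F_{q^s})$ with minimum distance $m - k + 1 = n - k + 1$. It remains to compute the $\F_q$-dimension: the coefficients $a_1,\ldots,a_{sk-1}$ range freely over $\F_{q^n}$ contributing $(sk-1)n$, while the pair $(a_0',a_0'') \in \F_{q^t}^2$ contributes $2t = n$. The total is $(sk-1)n + n = snk$, as claimed. No step here is an obstacle; the real content is in \Cref{th:extendtrombmrd}, and this statement is the verification that its hypotheses specialize cleanly to the finite field case.
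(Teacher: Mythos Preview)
Your proposal is correct and follows the same approach as the paper: the paper does not give an independent proof of this statement but simply records it as the specialization of \Cref{th:extendtrombmrd} to the case $\LL=\F_{q^n}$, $\LL'=\F_{q^t}$, $\K=\F_q$, where $\ell_F=1$. Your write-up is in fact more careful than the paper's, since you explicitly verify that $f\notin Z(R)$, that the norm condition specializes correctly, and that the $\F_q$-dimension is $nsk$; one small point you might add for completeness is that the hypothesis $\gamma\in\LL\setminus\LL'$ appearing in \Cref{th:extendtrombmrd} is not actually used in its proof (and is in any case implied whenever $\N_{\F_{q^n}/\F_q}(\gamma)$ is a nonsquare), which is why it is silently dropped here.
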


When $s=1$ and $F(y)=y-1$, the construction $D_{n,1,k}(\gamma,y-1)$ coincides with a \emph{Trombetti-Zhou codes} defined in III). In this case, $D_{n,1,k}(\gamma,y-1)$ defines an MRD code in $M_n(\F_q)$, for any $\gamma \in \F_{q^n}$ such that $(-1)^{ks}F_0^{k}\N_{\F_{q^n}/\F_q}(\gamma)=\N_{\F_{q^n}/\F_q}(\gamma)$ is not a square in $\F_q$.  \\

By \Cref{th:idealisersexttromb} adapted for the finite field case, the parameters of $D_{n,s,k}(\gamma,F)$ are the following.

\begin{theorem}  \label{th:parametersextensiontrombetti}
Assume that $k \leq n/2$ and $sk\geq 2$. Let $\C=D_{n,s,k}(\gamma,F)$ defined as in \eqref{eq:finiteextensiontrombzhou}. Then 
    \[
        \lid(\C)  \cong \F_{q^t}, \ \ \
        \rid(\C) \cong  \F_{q^t}, \  \ \
         \cen(\C)  \cong  \F_{q^s}, \ \mbox{ and } \
        Z(\C) \cong  \F_{q}
  \]
\end{theorem}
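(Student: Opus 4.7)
The plan is to deduce the present theorem as a direct specialisation of Theorem~\ref{th:idealisersexttromb} to the finite field setting. First I would match the parameters: taking $\LL = \F_{q^n}$, $\LL' = \F_{q^t}$, $\K = \F_q$, Proposition~\ref{th:basicskewpolynomial}(i) forces $\ell_F = 1$, hence $m = n$, and $\mathcal{E}(f) \cong E_F \cong \F_{q^s}$ by Theorem~\ref{th:isomorphismtheoremeigen}. With these identifications, the hypothesis $k \leq n/2$ is precisely $k \leq m/2$ and $sk \geq 2$ is precisely $sk\ell_F \geq 2$, so all the numerical hypotheses of Theorem~\ref{th:idealisersexttromb} coincide with those of the present statement.

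The only remaining hypothesis to verify is the non-centrality condition $f \notin Z(R)$ on the chosen irreducible factor $f$ of $F(x^n)$. Because $k \leq n/2$ forces $m = n \geq 2$, the element $F(x^n)$ factors into at least two irreducibles in $R$ and is therefore not itself irreducible. If some irreducible factor $f$ of $F(x^n)$ were central, then being two-sided and irreducible its own bound would coincide (up to an associate) with $f$; but on the other hand the bound of every irreducible factor of $F(x^n)$ is $F(x^n)$ itself, which would force $F(x^n)$ to be an associate of $f$ and hence $m = 1$, contradicting $m \geq 2$. Thus $f \notin Z(R)$ automatically.

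Substituting the identifications $\LL' = \F_{q^t}$, $E_F \cong \F_{q^s}$, and $\K = \F_q$ into the conclusion of Theorem~\ref{th:idealisersexttromb} immediately yields $\lid(\C) \cong \F_{q^t}$, $\rid(\C) \cong \F_{q^t}$, $\cen(\C) \cong \F_{q^s}$, and $Z(\C) \cong \F_q$. The main step to watch is the verification of non-centrality of $f$ above, but as sketched this is forced by the inequality $k \leq n/2$, so no further argument is needed beyond this bookkeeping.
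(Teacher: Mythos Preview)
Your proposal is correct and follows essentially the same approach as the paper, which simply states that the result follows from Theorem~\ref{th:idealisersexttromb} adapted to the finite field case. You have provided more detail than the paper does---in particular the verification that $f\notin Z(R)$---but the strategy is identical.
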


We are ready to prove that our construction provides infinite new families of MRD codes over finite fields.

\begin{theorem} \label{th:compareotherMRD}
    The family $D_{n,s,k}(\gamma,F)$, with $n=2t$, contains new MRD codes for all $1 < k \leq t$, $t \geq 2$ and $s \geq 3$ such that $n \nmid sk$. 
\end{theorem}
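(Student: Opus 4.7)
The plan is to establish newness via the nuclear parameter tuple, which is an equivalence invariant by \Cref{prop:idequiv}. I would first invoke \Cref{th:parametersextensiontrombetti} to record that every code $\mathcal{C} := D_{n,s,k}(\gamma, F)$ in our family has nuclear parameters $(q^{nsk},\, q^t,\, q^t,\, q^s,\, q)$. Since $\lid(\mathcal{C}) \cong \rid(\mathcal{C})$, the tuple is moreover symmetric under the swap induced by the adjoint/transpose operation. It therefore suffices to exhibit, for every known MRD code $\mathcal{C}'$ of the same size in $M_n(\F_{q^s})$ with minimum distance $n-k+1$, at least one discrepancy in this tuple.

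The families (I), (IV), and (V) of \Cref{ssec:knownmrd} -- generalized Gabidulin codes, the constructions of \cite{csajbok2020mrd}, and the scattered-polynomial constructions -- all have left idealiser isomorphic to $\F_{q^n}$, which is strictly larger than $\F_{q^t}$ since $n = 2t > t$; these are therefore excluded immediately. The Trombetti--Zhou codes (III) are defined only for $F(y) = y - 1$, so they live in $M_n(\F_q)$; equivalently, their centraliser is $\F_q$, whereas $C(\mathcal{C}) \cong \F_{q^s}$ with $s \geq 3$, ruling them out as well.

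The substantive part, and the main obstacle, is to rule out equivalence with the first new family $S_{n,s,k}(\eta, \rho, F)$ of \Cref{th:newMRDl>1} (here specialised to the finite-field case $\ell_F = 1$). By \Cref{th:parconstrsheekey}, writing $q = p^e$ and $y^\rho = y^{p^h}$, these codes have nuclear tuple $(q^{nsk},\, p^{(ne,h)},\, p^{(ne,ske-h)},\, q^s,\, p^{(e,h)})$ when $\eta \ne 0$; the $\eta = 0$ subfamily has left idealiser $\F_{q^n}$ and is already covered. Matching the parameters of $\mathcal{C}$ yields the three simultaneous conditions $e \mid h$, $(2t,\, h/e) = t$, and $(2t,\, sk - h/e) = t$. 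The second forces $h/e$ to be an odd multiple of $t$; since $h < ne = 2te$ gives $h/e < 2t$, the unique admissible value is $h/e = t$, i.e.\ $h = te$. Substituting into the third condition gives $(2t,\, sk - t) = t$, which is equivalent to $t \mid sk$ together with $(sk/t) - 1$ being odd. This forces $sk/t$ to be even, hence $n = 2t \mid sk$, contradicting the hypothesis $n \nmid sk$. This finishes the plan.
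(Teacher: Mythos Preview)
Your approach via nuclear parameters is exactly that of the paper, and your computation for $S_{n,s,k}(\eta,\rho,F)$ is correct and matches the paper's. However, there is a genuine gap: you never address family~(II), the AGTG codes. You may be assuming that these are subsumed by your $S_{n,s,k}$ comparison, since the paper notes that AGTG codes arise as $S_{n,1,k}(\eta,\tau,y-1)$; but that identification is over the extension $\F_{q^n}/\F_q$ with $s=1$, whereas the competing AGTG codes you must rule out live in $M_n(\F_{q^s})$, i.e.\ they are $S_{n,1,k}$ over the extension $\F_{q^{ns}}/\F_{q^s}$. Their parameter tuple is therefore $(p^{nkse},\,p^{(nse,j)},\,p^{(nse,kse-j)},\,p^{se},\,p^{(se,j)})$, with $nse$ rather than $ne$ in the idealiser gcds. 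This is a genuinely different system of congruences from the one you solved, and the paper treats it as a separate case. The good news is that the same parity trick works: matching forces $j$ and $kse-j$ each to be an odd multiple of $te$, so their sum $kse$ is an even multiple of $te$, giving $n\mid sk$.

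There is also a smaller ambient-space confusion running through your handling of (I), (III), (IV), (V). The comparison must take place in $M_n(\F_{q^s})$, so Gabidulin-type codes there have left idealiser $\F_{q^{ns}}$ (not $\F_{q^n}$), and Trombetti--Zhou codes over $\F_{q^s}$ have parameters $(q^{nsk},q^{st},q^{st},q^s,q^s)$; your dismissal of (III) on the grounds that ``they live in $M_n(\F_q)$'' is not the right reason, though comparing centres ($q^s$ versus $q$) still disposes of them. Finally, the paper also checks that duals of each known family remain in the same family; your adjoint remark covers the idealiser swap but you should note the closure under duality as well.
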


\begin{proof}
By Theorem \ref{th:parametersextensiontrombetti}, we know that a code $\C=D_{n,s,k}(\gamma,F)$ has as parameters
\begin{equation} 
\label{eq:parametersextensiontrombetti}
    (q^{nsk},q^t,q^t,q^s,q).
\end{equation}
Any code $\C'$ that is equivalent to $\C$ need to have the same parameters, see \Cref{prop:idequiv}. Let $q=p^e$.
    \begin{enumerate}
   \item The codes of the families I), IV) and V) in $M_n(\F_{q^{s}})$ and their adjoint and/or dual, when their size is equal to $q^{nsk}$, have an idealiser isomorphic to $\F_{q^{ns}}$ and so are not equivalent to $\C$.
    \item Note that an AGTG code in $M_n(\F_{q^s})$ described as in II), by \eqref{eq:parAGTGcodes} has parameters
\[
  \left(p^{nkse}, p^{(nse,j)}
, p^{(nse,kse-j)}
, p^{se}, p^{(se,j)}\right),  \]
where $\tau(x)=x^{p^j}$, with $0\leq j <ens$. 
For an AGTG code to have parameters as in \eqref{eq:parametersextensiontrombetti}, we need to have $(nse,j)=et$, implying that $j=g_1et$, for some odd natural number $g_1$. The third parameter shows that we would
require that $(nse,kse-j)=et$, and hence $kse-g_1et=g_2te$, for some odd natural number $g_2$. Therefore, we get $ks=(g_2-g_1)t$, and since $g_2-g_1$ is even, by the assumptions on $s,k$ and $t$, we get that $\C$ cannot be equivalent to an AGTG code. The same argument applies for the adjoint and the dual of an AGTG code. Indeed, the adjoint operation switch the idealisers of the code. The dual of an AGTG code in $M_n(\F_{q^s})$:
\[\{a_0+a_1x^s+\ldots+a_{k-1}x^{s(k-1)}+\tau(a_0)\eta x^{sk} \colon a_i \in \F_{q^n}\}\subseteq  M_n(\F_{q^s})  \]
is equivalent to the code
\[
\{a_0+a_1x^s+\ldots+a_{n-k-1}x^{s(n-k-1)}-\tau^{-1}(a_0\eta) x^{s(n-k)} \colon a_i \in \F_{q^n}\}\subseteq M_n(\F_{q^s})
\]
and so it is still an AGTG code, as well. 

        \item A Trombetti-Zhou code in $M_n(\F_{q^s})$ has parameters $(q^{nsk},q^{st},q^{st},q^s,q^s)$. Comparing the last parameter, we should have $s=1$, a contradiction. Since, the adjoint and dual of a Trombetti-Zhou code is equivalent to a Trombetti-Zhou code, see \cite[Proposition 4 and Proposition 5]{trombetti2018new}, then the same argument shows that $D_{n,s,k}(\gamma,F)$ cannot be equivalent to a code belonging to this family.
        \item Finally, we compare $\C$ with a code of type $S_{n,s,k}(\gamma',F')$, with $1<k \leq n/2$. Assume that $y^{\rho}=y^{p^h}$, with $h < ne$ and $\sigma(y)=y^{p^{ej}}$ for any $y \in \fqn$, with $(j,n)=1$. Then, by \Cref{th:parconstrsheekey}, $S_{n,s,k}(\gamma',F')$ has parameters 
        \[\left(p^{nske},p^{(ne,h)},p^{(ne,ske-h)},p^{se},p^{(e,h)}\right)
        \]
        Comparing the second parameter, we need to have $(ne,h)=te$, implying $h=te$. By the third parameter, we get $(ne,ske-te)=te$. This means that $ske-te=gte$, for some $g$ odd. Hence, $sk=(g-1)t$ and, since $g-1$ is even, and $n=2t$, by the assumptions on $s,k$ and $t$, we get that $\C$ and $S_{n,s,k}(\gamma',F')$ are not equivalent. Moreover, since the adjoint of a code just switch the left with the right idealiser, we also get that a code $\C$ cannot equivalent to the adjoint of $S_{n,s,k}(\gamma',F')$.
    \end{enumerate}
\end{proof}

\subsection{Newness of semifields}
Here, we explicitly describe the semifield obtained by our construction. 
In the case $\LL=\F_{q^n}$, $\LL'=\F_{q^t}$ and $\K=\F_q$, from \Cref{th:divisionexthughes}, we get the following family of semifields. We need to this this because the computation of the right nucleus cannot be easily performed by working directly with the spread set. In particular, we cannot compute the centraliser of the spread set directly, since the skew polynomial framework gives us only the $\F_{q^s}$-linear endomorphisms, whereas we would need to compute the centraliser inside the space of $\fq$-linear endomorphisms.

\begin{theorem} \label{th:extensionhughesfinite}
Assume that $q$ be an odd prime power. Let $n=2t \geq 4$ and $\sigma$ be a generator of the Galois group $\Gal(\F_{q^n}/\F_q)$, and consider $R=\F_{q^n}[x;\sigma]$. Assume that $F(y) \neq y$ is a monic irreducible monic polynomial in $\F_q[y]$ having degree $s$, and let $E_F \cong \frac{\F_q[y]}{(F(y))} \cong \F_{q^s}$ and $R_F=\frac{R}{RF(x^n)} \cong M_n(\F_{q^s})$. Let $f$ be a monic irreducible factor of $F(x^n)$ in $R$.

Let $\gamma \in \F_{q^n}$ such that $\gamma/f_0 \in \F_{q^n} \setminus \F_{q^t}$ and $\N_{q^n/q}(\gamma)$ is not a square in $\F_q$, where $f_0$ is the constant coefficient of $f$. In $R/Rf$, the operation 
    \[
    \begin{array}{rl}
         a \star_D b & =  \left((a_0'+\gamma a_0'')+\sum_{i=1}^{s\ell-1} a_ix^i \right)\star_D \sum_{i=0}^{s\ell-1} b_ix^i \\
         &:= \left((a_0'+\gamma a_0'')+\sum_{i=1}^{s\ell-1} a_i x^i - \frac{\gamma}{f_0} a_0'' f(x) \right) \sum_{i=0}^{s\ell-1} b_ix^i \\
         & = \left(a - \frac{\gamma}{f_0} a_0'' f \right) b,
    \end{array}
    \]
    for any $a_i,b_i \in \F_{q^n}$ and $a_0',a_0'' \in \F_{q^t}$, defines a semifield of order $q^{ns}$.
\end{theorem}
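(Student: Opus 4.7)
The plan is to deduce this statement directly from its general-field analogue, Theorem~\ref{th:divisionexthughes}, by specialising the data to the finite-field setting. First I would set $\LL = \F_{q^n}$ and $\K = \F_q$, which form a cyclic Galois extension of even degree $n = 2t$, and take $\LL' = \F_{q^t}$ as the unique intermediate subfield with $\K \le \LL' < \LL$ and $[\LL:\LL']=2$. By Proposition~\ref{th:basicskewpolynomial}(i), because $\K$ is finite we automatically have $\ell_F = 1$, so every monic irreducible factor $f$ of $F(x^n)$ in $R$ has degree $s\ell_F = s$, and $F(x^n)$ splits into $m = n$ such factors; in particular $\mathcal{E}(f) \cong E_F \cong \F_{q^s}$.

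Next I would verify that the hypotheses imposed on $\gamma$ in the present statement coincide exactly with those of Theorem~\ref{th:divisionexthughes}: namely $\gamma/f_0 \in \LL \setminus \LL'$ and $\N_{\LL/\K}(\gamma)$ is not a square in $\K$. Applying Theorem~\ref{th:divisionexthughes} with $\ell = 1$ then yields that $(R/Rf,\star_D)$, with multiplication as defined in the statement, is a unital (not necessarily associative) division algebra over $\F_q$ with unity $1$ and $\F_q$-dimension $ns\ell = ns$.

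Finally, a finite unital not-necessarily-associative division algebra over $\F_q$ is by definition a semifield, and one of $\F_q$-dimension $ns$ has $q^{ns}$ elements. This gives the stated conclusion. I do not anticipate any genuine obstacle here: the substantive content, namely that every nonzero element of the associated spread set $\phi_D(R/Rf)$ is invertible in $\End_{\mathcal{E}(f)}(R/Rf)$, has already been proved in Lemma~\ref{lm:divisionexthughes} and Theorem~\ref{th:divisionexthughes} using the norm condition of Theorem~\ref{th:boundrankinfinite}. The only points really needing verification are the translation of data between the general and the finite-field setting, and the elementary dimension count $\dim_{\F_q}(R/Rf) = [\LL:\K]\cdot \deg(f) = n\cdot s$.
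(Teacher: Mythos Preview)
Your proposal is correct and matches the paper's approach exactly: the paper states this theorem without a separate proof, introducing it with the sentence ``In the case $\LL=\F_{q^n}$, $\LL'=\F_{q^t}$ and $\K=\F_q$, from \Cref{th:divisionexthughes}, we get the following family of semifields,'' so it is treated as a direct specialisation of Theorem~\ref{th:divisionexthughes} to the finite-field setting with $\ell_F=1$, precisely as you outline.
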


For $s=1$, $F(y)=y-1$ and $f(x)=x-1$, we get the semifield where the multiplication between two elements $c,d \in \F_{q^n}$, is defined as follows. Assume that $\sigma(x)=x^{q^j}$, with $(j,n)=1$ and $\gamma^{q^j+1}=u+v\gamma$, for some $u,v \in \F_{q^t}$. If $c=c_0+\gamma c_1$ and $d=d_0+\gamma d_1$, with $c_0,c_1,d_0,d_1\in \F_{q^t}$.  Then 

\begin{align*}
     c \star_{D} d 
     & = \left(c_0+\gamma c_1 \right) \star_{D} \left(d_0+\gamma d_1 \right) \\
     & =\left(c_0+\gamma c_1x \right) \left(d_0+\gamma d_1 \right) \ \mod f \\
     & = c_0d_0+\gamma c_0d_1+\gamma c_1d_0^{q^s}+\gamma^{q^s+1}c_1d_1^{q^s} \\
     & = (c_0d_0+c_1d_1^{q^s}u)+\gamma(c_0d_1+c_1d_0^{q^s}+c_1d_1^{q^s}v).
\end{align*}

This means that if we identify $\F_{q^n}$ with $\F_{q^t}\oplus \F_{q^t}$, the following multiplication 
\[
(c_0,c_1) \star_D (d_0,d_1)=(c_0d_0+c_1d_1^{q^s}u,c_0d_1+c_1d_0^{q^s}+c_1d_1^{q^s}v),
\]
defines a semifield in $\F_{q^t}\oplus \F_{q^t}$.  

The opposite algebra of $(\F_{q^t}\oplus \F_{q^t},\star_D)$, i.e. the \emph{dual} of the semifield $(\F_{q^t}\oplus \F_{q^t},\star_D)$, obtained by reversing the multiplication $\star_D$ coincides exactly with the Hughes-Kleinfeld semifield $\mathbb{H}$, see \cite[Theorem 9.7]{hughes1973projective}. This is also the multiplication of a Knuth
semifield of type II \cite[p. 215]{knuth1965finite}.

As a consequence the semifields provided in \Cref{th:extensionhughesfinite} defines a family of semifields that properly contain the dual of the Hughes-Kleinfeld semifield.

By \cite[Lemma 9.8]{hughes1973projective}, we know that $ \NN_m(\mathbb{H}) \cong \F_{q^t} \mbox{ and }\NN_r(\mathbb{H}) \cong \F_{q^t}
$ and in \cite{hughes1959collineation}, it was proved that $a+\gamma b \in \NN_l(\mathbb{H})$ if and only if 
\[
\begin{cases}
a^{q^{2s}}+b^{q^{2s}}v^{q^s}=a+b^{q^s}v \\
bu+a^{q^s}v+b^{q^s}v^2=b^{q^{2s}}u^{q^s}+a^{q^{2s}}v+b^{q^{2s}}v^{q^s+1}
\end{cases}.
\]
Since the semifield $(R/Rf,\star_{D})$, when $s=1$, coincides with $\mathbb{H}^{op}$ and $\NN_l(\mathbb{H}^{op}) \cong \NN_r(\mathbb{H})$, $\NN_m(\mathbb{H}^{op}) \cong \NN_r(\mathbb{H})$ and $\NN_r(\mathbb{H}^{op}) \cong \NN_l(\mathbb{H})$, see e.g. \cite[Proposition 2]{marino2012nuclei}, the nuclei of the semifield defined in \Cref{th:extensionhughesfinite}, in the case $s=1$ are determined. 
Next, let us investigate the parameters of the semifields defined in \Cref{th:extensionhughesfinite}, when $s>1$.

\begin{corollary}
Assume that $q$ is an odd prime power, $s \geq 2$, $n=2t \geq 4$. Then the semifields defined as in \Cref{th:extensionhughesfinite} have parameters
    \[
(q^{2ts},q^{t},q^{t},q^{s},q)
    \]
\end{corollary}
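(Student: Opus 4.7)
The proof will be a direct specialisation of Theorem \ref{th:righthughesinfinite} to the finite-field setting $\LL=\F_{q^n}$, $\LL'=\F_{q^t}$, $\K=\F_q$, so the plan is mostly bookkeeping. I will address the four nuclear/size invariants one by one.

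First, I would compute the order. Since $\LL/\K$ is a finite extension, part (i) of Proposition \ref{th:basicskewpolynomial} forces $\ell_F=1$, and hence every monic irreducible divisor $f$ of $F(x^n)$ in $R$ has degree $s\ell_F = s$. Thus $\dim_\K(R/Rf) = sn = 2ts$, which gives $|R/Rf| = q^{2ts}$, matching the first coordinate.

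Next I would invoke Theorem \ref{th:righthughesinfinite}, whose hypotheses hold here: $f \notin Z(R)$ (since $\deg(f)=s<\deg(F(x^n))=sn$ and $F(x^n)$ is by hypothesis not irreducible when $n>1$), and the auxiliary assumption $sk\ell \geq 2$ used through Theorem \ref{th:idealisersexttromb} is satisfied because $s\geq 2$ and $k=\ell=1$. The theorem then yields the $\K$-algebra isomorphisms
\[
\NN_l(\A)\cong \LL'=\F_{q^t},\quad \NN_m(\A)\cong \LL'=\F_{q^t},\quad Z(\A)\cong \K=\F_q,
\]
which immediately give the sizes $q^t$, $q^t$, and $q$.

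The only remaining point is the right nucleus. From Theorem \ref{th:righthughesinfinite} we have $\NN_r(\A)\cong \mathcal{E}(f)$; to translate this into a cardinality I would use the observation following Theorem \ref{th:isomorphismtheoremeigen}: in the finite-field case the Wedderburn--Dickson Theorem forces the $E_F$-central division algebra $\mathcal{E}(f)$ to be a field, and by Theorem \ref{th:isomorphismtheoremeigen} its degree over $E_F$ is $\ell_F=1$, so $\mathcal{E}(f)\cong E_F\cong \F_{q^s}$. Hence $|\NN_r(\A)| = q^s$, and combining everything yields parameters $(q^{2ts},q^t,q^t,q^s,q)$. The only step that is not purely mechanical is quoting the finite-field reduction of the eigenring to $E_F$; this is precisely the reason the authors flagged in the preceding paragraph that the right nucleus cannot be read off directly from the spread set, but once we route through the skew-polynomial description it becomes immediate.
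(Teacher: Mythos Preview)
Your proposal is correct and follows exactly the paper's approach: the paper's entire proof is the single line ``It follows from \Cref{th:righthughesinfinite}'', and you have simply unpacked that citation by specialising $\LL=\F_{q^n}$, $\LL'=\F_{q^t}$, $\K=\F_q$, $\ell_F=1$, and $\mathcal{E}(f)\cong E_F\cong\F_{q^s}$. Your added checks (that $f\notin Z(R)$ and that $s\ell\geq 2$) are harmless bookkeeping ensuring the cited theorem applies.
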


\begin{proof}
    It follows from \Cref{th:righthughesinfinite}.
\end{proof}

Let us denote the family of semifields defined in \Cref{th:extensionhughesfinite} as $\cD_{n,s}$, recalling that the corresponding spread set coincides with $D_{n,s,1}(\gamma',F)$ for some $\gamma'$ and some $F$ of degree $s$. We will show that for certain choices of $n,t,s$, the parameters of $\cD_{n,s}$ do not coincide with those of any known semifield, implying that our family contains new semifields. For convenience we tabulate the parameters of the relevant known constructions. We refer to Sections 2.6 and 5.2 of \cite{sheekey2020new}, as well as \cite{lavrauw2011finite}, for these parameters. 
\vspace{0.5cm}

\begin{tabular}{c|c|c}
$\cD_{n,s}$&$(q^{2ts},q^{t},q^{t},q^{s},q)$&\\
\hline
\hline
PZ&$(p^{2n},p^{(n,i,j)},p^{(n,i)},p^{(n,i,j)},p^{(n,i,j)})$&\cite{pottzhou2013}\\
\hline
    $S_{n,s,1}(\eta,\rho,F)$, $\eta\ne 0$  & 
    $
\left(p^{nse},p^{(ne,h)},p^{(ne,es-h)},p^{es},p^{(e,h)}\right)$ & \cite[Corollary 1]{sheekey2020new}\\

    $S_{n,s,1}(0,\rho,F)$  & 
$
(p^{nse},p^{en},p^{en},p^{es},p^{e})$ & \cite[Corollary 1]{sheekey2020new}\\

\hline
   $\mathcal{S}$  & $(p^{2m},p^{e/2},p^{e/2},p^e,\cdot)$&\cite{gologlu2023}\\
   \hline
$\SSS_{\sigma,\tau,\alpha,\eta}$&
$(p^{2m},p^{(a,b,m)},p^{(a,b,m)},p^{(2a,m)},\cdot)$&\cite{kolsch2024unifying}
\end{tabular}

\begin{remark}\label{rem:error}
    There is an error in the proof of \cite[Corollary 1]{sheekey2020new}; in particular, there it is assumed that the centraliser of the spread set is contained in $R_F\simeq M_n(\F_{q^s})$, whereas one should only assume that it is contained in $M_{ns}(\fq)$. However, a modification to the multiplication similar to Section \ref{sec:extendingSheekey} can be used to prove that the statement of \cite[Corollary 1]{sheekey2020new} is indeed correct. 
\end{remark}

\begin{proposition}
\label{prop:semifieldnewness}
The family of semifields of the form $\cD_{n,s}$ contains new semifields for an infinite set of parameters.  
\end{proposition}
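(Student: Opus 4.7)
The plan is to restrict to the subfamily obtained by setting $q = p$ an odd prime (so the base-field parameter $e = 1$) and choosing integers $t, s$ with $t \geq 2$, $s \geq 3$, and $\gcd(s, 2t) = 1$. The set of such triples $(p, s, t)$ is clearly infinite (e.g.\ fix any odd prime $p$, take $t = 2$, and let $s$ range over the odd primes at least $3$), so it suffices to verify that for every such choice, the semifield $\cD_{n,s}$ has nuclear parameters not matching any family in the table. Under these restrictions the parameters simplify to
\[
(p^{2ts},\; p^t,\; p^t,\; p^s,\; p),
\]
and so one must show that no semifield from the table can have left nucleus $p^t$, middle nucleus $p^t$, right nucleus $p^s$, and centre $p$ simultaneously.

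The proof then proceeds by a case-by-case comparison with each row of the table, in every case extracting a divisibility obstruction from $\gcd(s, 2t) = 1$ with $s, t > 1$. For the twisted construction $S_{n', s', 1}(\eta, \rho, F)$ with $\eta \neq 0$, matching the centraliser forces $e' s' = s$, the size forces $n' = 2t$, and the centre condition $(e', h) = 1$ together with the two idealisers $(n'e', h) = (n'e', e's' - h) = t$ forces $t \mid h$ and $t \mid s - h$, hence $t \mid s$ --- impossible. The zero-twist version $S_{n', s', 1}(0, \rho, F)$ has both idealisers $p^{n'e'}$, which cannot equal $p^t$ since then $n' = t/e' < 2t$. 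For the Pott--Zhou family the centre condition $(n_{PZ}, i, j) = 1$ would collapse two of the three nuclei to the prime field, inconsistent with $t \geq 2$. For $\mathcal{S}$, matching the multiset $\{p^{e/2}, p^{e/2}, p^e\}$ with $\{p^t, p^t, p^s\}$ forces $s = 2t$, contradicting $\gcd(s, 2t) = 1$. For $\SSS_{\sigma, \tau, \alpha, \eta}$ the size forces $m = ts$, and the only admissible matching of the multiset $\{(a, b, ts), (a, b, ts), (2a, ts)\}$ with $\{t, t, s\}$ gives $t \mid a$ and $t \mid b$; writing $a = t a'$ then forces $t(2 a', s) = s$, hence $t \mid s$ --- contradiction. (The alternative matching $(a,b,m) = s$, $(2a,m) = t$ would require $s = t$, impossible.)

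The main anticipated obstacle is bookkeeping: under Knuth/isotopy equivalence the labels \emph{left}, \emph{middle}, \emph{right} nucleus can permute, so one must rule out every matching of the multiset $\{t, t, s\}$ against the multiset of nuclei of each known family, not only the order shown in the table. However, because our multiset has precisely two equal elements and one distinct element (using $s \neq t$, which follows from $\gcd(s, t) = 1$ and $s, t > 1$), there are at most two possible matchings in each row, and each reduces to a clean divisibility contradiction as above. One small complementary verification is that $\cD_{n,s}$ is nonempty for the chosen parameters: the required $\gamma \in \F_{q^n}$ with $\gamma / f_0 \notin \F_{q^t}$ and $\N_{q^n/q}(\gamma)$ a non-square in $\F_q$ exists by a direct counting argument on the norm map and the subfield $\F_{q^t} \subset \F_{q^n}$, valid for all odd $q$ and $n \geq 4$.
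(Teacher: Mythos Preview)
Your sketch is correct and reaches the same conclusion as the paper, but via a genuinely different subfamily. The paper restricts to $s\ge 2$ \emph{even} with $n\nmid s$ (and arbitrary odd $q$); you instead take $q=p$ prime and require $\gcd(s,2t)=1$ with $s\ge 3$, forcing $s$ odd. These parameter ranges are disjoint, so the two proofs certify newness on complementary infinite families. The paper's choice of even $s$ is what makes its $2$-adic valuation argument for the $\SSS_{\sigma,\tau,\alpha,\eta}$ family work; your coprimality hypothesis replaces that with the cleaner deduction $t\mid s$ from $(a,b,ts)=t$, $(2a,ts)=s$. Your approach is more elementary (no valuations), at the cost of the extra normalisation $e=1$.

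One point deserves a little more care than your final paragraph suggests. You correctly note that under the Knuth orbit the three nuclei permute, so for each comparison family you must rule out both matchings of $\{t,t,s\}$. For Pott--Zhou, $\mathcal{S}$, and $\SSS$ your multiset arguments already handle this symmetrically. For $S_{n',s',1}(\eta,\rho,F)$ with $\eta\ne 0$, however, you only wrote out the matching $s'e'=s$; the alternative $s'e'=t$ (so $n'=2s$) does not collapse to a one-line divisibility, because the centre $(e',h)=1$ is only guaranteed to be an isotopy invariant, not a Knuth-orbit invariant, so you cannot freely invoke it after permuting nuclei. The case can still be excluded: from $s'e'=t$ and the size equation one gets $e'\mid t$, and requiring one of $(2se',h),(2se',t-h)$ to equal $t$ forces $t\mid 2e'$, hence $s'\in\{1,2\}$; each subcase then yields $e'\in\{t,t/2\}$, and matching the remaining nucleus to $s$ together with $(e',h)=1$ (legitimate in the isotopy comparison) gives a parity contradiction using $\gcd(s,2t)=1$. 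This is routine, but it is more than ``a clean divisibility contradiction as above,'' so in a full write-up you should spell it out rather than fold it into the bookkeeping remark.
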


\begin{proof}
We proceed as in \cite{sheekey2020new} by establishing cases where $\cD_{n,s}$ has parameters for which no semifields were previously known. We assume that $q$ is an odd prime power, $s \geq 2$ is even, $n=2t \geq 4$, and that $n \nmid s$.

Note that when $s>1$, $n\geq 4$, we have that no semifield of the form $\cD_{n,s}$ has any nucleus equal to its centre, and hence cannot coincide with any Pott-Zhou semifield. Furthermore, no $\cD_{n,s}$ can be two-dimensional over any nucleus, and so cannot coincide with any rank-two semifield.

It follows using the same argument of point 4 in the proof of Theorem \ref{th:compareotherMRD} that the semifields defined by $\cD_{n,s}$ as in \Cref{th:extensionhughesfinite} and those defined by $S_{n,s,1}(\eta,\rho,F)$ as in \cite{sheekey2020new} are not isotopic.

Next we verify that no $\cD_{n,s}$ can coincide with the construction of \cite{gologlu2023}. In this construction, the parameters (up to permutation) are $(p^{2m},p^{e/2},p^{e/2},p^e,\cdot)$. Therefore we must have $q^{t}=q^{s/2}$, which (since $n=2t$) implies that $s=n$. So again requiring that $n\nmid s$ ensures that $\cD_{n,s}$ cannot arise from this construction.

Finally we compare to the construction of \cite{kolsch2024unifying}. For equality of parameters, we need $m=st$, $(a,b,st)=t$, and $(2a,st)=s$. Then we have that $t$ divides $a$, and so $t$ divides $s= (2a,st)$. But we are assuming that $n=2t$ does not divide $s$, and so $s/t$ is odd. Consider the $2$-adic valuation $v_2$. Then we have $v_2(s)=v_2(t)$. As $t$ divides $a$ we have $v_2(t)\leq v_2(a)$. But since $(2a,st)=s$, we have $v_2(s)=\mathrm{min}\{v_2(a)+1,2v_2(s)\}$. As $s$ is even, we have $2v_2(s)>v_2(s)$, and so we get $v_2(t)=v_2(s)= v_2(a)+1$, a contradiction.
\end{proof}

The smallest new case then arises for $n=4,s=2$, with parameters $(q^8,q^2,q^2,q^2,q)$.

\begin{remark}
    Note that it is possible (and indeed likely) that the family $\cD_{n,s}$ contains new semifields also in the cases where semifields with the same parameters are already known. However, as the isotopy problem for semifields with equal parameters can be very difficult, and as a complete answer to this question is not the aim of this paper, we leave it as an open problem.
\end{remark}

\begin{remark}
    Analogous to \cite[Remark 8]{sheekey2016new}, one could also construct further semifields and MRD codes from any pair of additive maps $\phi_1\phi_2:\LL\rightarrow \LL$ satisfying 
\[\N_{\LL/\K}(\phi_1(a))\ne \N_{\LL/\K}(\phi_2(a))(-1)^{sk\ell(n-1)}F_0^{k\ell},
    \] for all $a\in \LL$,
    where $F_0$ is the constant coefficient of $F(x^n)$. The classification of such pairs of maps remains an open problem, particularly over infinite fields.
\end{remark}

\section*{Acknowledgments}
The research was partially supported by the Italian National Group for Algebraic and Geometric Structures and their Applications (GNSAGA - INdAM) and by the INdAM - GNSAGA Project \emph{Tensors over finite fields and their applications}, number E53C23001670001 and by Bando Galileo 2024 – G24-216. This research was partially supported by grant PID2023-149565NB-I00 funded by MICIU/AEI/ 10.13039/501100011033 and by FEDER, EU. Paolo Santonastaso is very grateful for the hospitality of the School of Mathematics and Statistics, University College Dublin, Ireland, and of Departamento de \'Algebra of
Universidad de Granada, Spain, where he was visiting during the development of this research.

\bibliographystyle{abbrv}
\bibliography{biblio}

\begin{thebibliography}{10}

\bibitem{albert1942non}
A.~A. Albert.
\newblock Non-associative algebras: I. fundamental concepts and isotopy.
\newblock {\em Annals of Mathematics}, pages 685--707, 1942.

\bibitem{augot2021rank}
D.~Augot, A.~Couvreur, J.~Lavauzelle, and A.~Neri.
\newblock Rank-metric codes over arbitrary {G}alois extensions and rank analogues of {R}eed-{M}uller codes.
\newblock {\em SIAM Journal on Applied Algebra and Geometry}, 2021.

\bibitem{bartoli2021conjecture}
D.~Bartoli, B.~Csajb{\'o}k, and M.~Montanucci.
\newblock On a conjecture about maximum scattered subspaces of $\mathbb{F}_{q^6}\times \mathbb{F}_{q^6}$.
\newblock {\em Linear Algebra and its Applications}, 631:111--135, 2021.

\bibitem{bartoli2023scattered}
D.~Bartoli, G.~Longobardi, G.~Marino, and M.~Timpanella.
\newblock Scattered trinomials of $\mathbb{F}_{q^6} [x]$ in even characteristic.
\newblock {\em Finite Fields and Their Applications}, 97:102449, 2024.

\bibitem{Bartoli2020newfamily}
D.~Bartoli, C.~Zanella, and F.~Zullo.
\newblock A new family of maximum scattered linear sets in {$\mathrm{PG}(1,q^6)$}.
\newblock {\em Ars Mathematica Contemporanea}, 19(1):125--145, 2020.

\bibitem{bartz2022rank}
H.~Bartz, L.~Holzbaur, H.~Liu, S.~Puchinger, J.~Renner, and A.~Wachter-Zeh.
\newblock Rank-metric codes and their applications.
\newblock {\em Foundations and Trends{\textregistered} in Communications and Information Theory}, 19(3):390--546, 2022.

\bibitem{csajbok2018anewfamily}
B.~Csajb{\'o}k, G.~Marino, O.~Polverino, and C.~Zanella.
\newblock A new family of {MRD}-codes.
\newblock {\em Linear Algebra and its Applications}, 548:203--220, 2018.

\bibitem{csajbok2020mrd}
B.~Csajb{\'o}k, G.~Marino, O.~Polverino, and Y.~Zhou.
\newblock {MRD} codes with maximum idealizers.
\newblock {\em Discrete Mathematics}, 343(9):111985, 2020.

\bibitem{csajbok2018new}
B.~Csajb{\'o}k, G.~Marino, and F.~Zullo.
\newblock New maximum scattered linear sets of the projective line.
\newblock {\em Finite Fields and Their Applications}, 54:133--150, 2018.

\bibitem{delsarte1978bilinear}
P.~Delsarte.
\newblock Bilinear forms over a finite field, with applications to coding theory.
\newblock {\em Journal of Combinatorial Theory, Series A}, 25(3):226--241, 1978.

\bibitem{dickson1906linear}
L.~E. Dickson.
\newblock Linear algebras in which division is always uniquely possible.
\newblock {\em Transactions of the American Mathematical Society}, 7(3):370--390, 1906.

\bibitem{elmaazouz2021}
Y.~El~Maazouz, M.~A. Hahn, A.~Neri, and M.~Stanojkovski.
\newblock Valued rank-metric codes.
\newblock {\em Journal of Algebra and Its Applications}, 2023.

\bibitem{gabidulin1985theory}
E.~M. Gabidulin.
\newblock Theory of codes with maximum rank distance.
\newblock {\em Problemy peredachi informatsii}, 21(1):3--16, 1985.

\bibitem{giesbrecht1998factoring}
M.~Giesbrecht.
\newblock Factoring in skew-polynomial rings over finite fields.
\newblock {\em Journal of Symbolic Computation}, 26(4):463--486, 1998.

\bibitem{gologlu2023}
F.~G\"olo\u{g}lu and L.~K\"olsch.
\newblock An exponential bound on the number of non-isotopic commutative semifields.
\newblock {\em Trans. Amer. Math. Soc.}, 376(3):1683--1716, 2023.

\bibitem{gomez2019computing}
J.~Gomez-Torrecillas, F.~J. Lobillo, and G.~Navarro.
\newblock Computing the bound of an ore polynomial. applications to factorization.
\newblock {\em Journal of Symbolic Computation}, 92:269--297, 2019.

\bibitem{gomez2018hartmann}
J.~G{\'o}mez-Torrecillas, G.~Navarro, and A.~Neri.
\newblock Hartmann--tzeng bound and skew cyclic codes of designed hamming distance.
\newblock {\em Finite Fields and Their Applications}, 50:84--112, 2018.

\bibitem{gorla2018codes}
E.~Gorla and A.~Ravagnani.
\newblock Codes endowed with the rank metric.
\newblock {\em Greferath M., Pavčević M., Silberstein N., Vázquez-Castro M. (eds) Network Coding and Subspace Designs. Signals and Communication Technology.}, 2018.

\bibitem{gutierrez2006building}
J.~Gutierrez and D.~Sevilla.
\newblock Building counterexamples to generalizations for rational functions of ritt's decomposition theorem.
\newblock {\em Journal of Algebra}, 303(2):655--667, 2006.

\bibitem{hua1951atheorem}
L.~K. Hua.
\newblock A theorem on matrices over a sfield and its applications.
\newblock {\em Acta Mathematica Sinica}, 1:109--163, 1951.

\bibitem{hughes1959collineation}
D.~Hughes.
\newblock Collineation groups of non-desarguesian planes, i: the hall veblen-wedderburn systems.
\newblock {\em American Journal of Mathematics}, 81(4):921--938, 1959.

\bibitem{hughes1960seminuclear}
D.~R. Hughes and E.~Kleinfield.
\newblock Seminuclear extensions of galois fields.
\newblock {\em American Journal of Mathematics}, 82(3):389--392, 1960.

\bibitem{hughes1973projective}
D.~R. Hughes and F.~C. Piper.
\newblock {\em Projective planes}.
\newblock Graduate texts in mathematics. Springer-Verlag, 1973.

\bibitem{jacobson1934}
N.~Jacobson.
\newblock Non-commutative polynomials and cyclic algebras.
\newblock {\em Annals of Mathematics}, 35(2):197--208, 1934.

\bibitem{jacobson1943}
N.~Jacobson.
\newblock {\em {The theory of Rings.}}
\newblock {American Mathematical Society}, 531 West 116TH Street, New York City, 1943.

\bibitem{jacobson2009finite}
N.~Jacobson.
\newblock {\em Finite-dimensional division algebras over fields}.
\newblock Springer Science \& Business Media, 2009.

\bibitem{kamche2019rank}
H.~T. Kamche and C.~Mouaha.
\newblock Rank-metric codes over finite principal ideal rings and applications.
\newblock {\em IEEE Transactions on Information Theory}, 65(12):7718--7735, 2019.

\bibitem{knuth1965finite}
D.~E. Knuth.
\newblock Finite semifields and projective planes.
\newblock {\em Journal of Algebra}, 2(2):182--217, 1965.

\bibitem{kolsch2024unifying}
L.~K{\"o}lsch.
\newblock A unifying construction of semifields of order $ p^{2m}$.
\newblock {\em arXiv preprint arXiv:2405.09068}, 2024.

\bibitem{Gabidulins}
A.~Kshevetskiy and E.~Gabidulin.
\newblock The new construction of rank codes.
\newblock {\em Proceedings. International Symposium on Information Theory, 2005}, pages 2105--2108, 2005.

\bibitem{lavrauw2011finite}
M.~Lavrauw and O.~Polverino.
\newblock Finite semifields.
\newblock {\em Current research topics in Galois Geometry}, pages 131--160, 2011.

\bibitem{liebhold2016automorphism}
D.~Liebhold and G.~Nebe.
\newblock Automorphism groups of {G}abidulin-like codes.
\newblock {\em Archiv der Mathematik}, 107(4):355--366, 2016.

\bibitem{longobardi2023large}
G.~Longobardi, G.~Marino, R.~Trombetti, and Y.~Zhou.
\newblock A large family of maximum scattered linear sets of {$\mathrm{PG}(1,q^n)$} and their associated {MRD} codes.
\newblock {\em Combinatorica}, pages 1--36, 2023.

\bibitem{longobardi2021linear}
G.~Longobardi and C.~Zanella.
\newblock Linear sets and {MRD}-codes arising from a class of scattered linearized polynomials.
\newblock {\em Journal of Algebraic Combinatorics}, 53:639--661, 2021.

\bibitem{lunardon2018nuclei}
G.~Lunardon, R.~Trombetti, and Y.~Zhou.
\newblock On kernels and nuclei of rank metric codes.
\newblock {\em Journal of Algebraic Combinatorics}, 46:313--340, 2017.

\bibitem{lunardon2018generalized}
G.~Lunardon, R.~Trombetti, and Y.~Zhou.
\newblock Generalized twisted {G}abidulin codes.
\newblock {\em Journal of Combinatorial Theory, Series A}, 159:79--106, 2018.

\bibitem{marino2020mrd}
G.~Marino, M.~Montanucci, and F.~Zullo.
\newblock {MRD}-codes arising from the trinomial $x^q+ x^{q^3}+ cx^{q^5} \in f_{q^6} [x]$.
\newblock {\em Linear Algebra and its Applications}, 591:99--114, 2020.

\bibitem{marino2012nuclei}
G.~Marino and O.~Polverino.
\newblock On the nuclei of a finite semifield.
\newblock {\em Theory and applications of finite fields}, Contemporary Mathematics, 579:123--141, 2012.

\bibitem{mcguire2019characterization}
G.~McGuire and J.~Sheekey.
\newblock A characterization of the number of roots of linearized and projective polynomials in the field of coefficients.
\newblock {\em Finite Fields and Their Applications}, 57:68--91, 2019.

\bibitem{neri2022extending}
A.~Neri, P.~Santonastaso, and F.~Zullo.
\newblock Extending two families of maximum rank distance codes.
\newblock {\em Finite Fields and Their Applications}, 81:102045, 2022.

\bibitem{oggier2007cyclic}
F.~Oggier, J.-C. Belfiore, E.~Viterbo, et~al.
\newblock Cyclic division algebras: A tool for space--time coding.
\newblock {\em Foundations and Trends{\textregistered} in Communications and Information Theory}, 4(1):1--95, 2007.

\bibitem{ore1933theory}
O.~Ore.
\newblock Theory of non-commutative polynomials.
\newblock {\em Annals of Mathematics}, 34(3):480--508, 1933.

\bibitem{otal2016additive}
K.~Otal and F.~{\"O}zbudak.
\newblock Additive rank metric codes.
\newblock {\em IEEE Transactions on Information Theory}, 63(1):164--168, 2016.

\bibitem{owen2023eigenspaces}
A.~Owen and S.~Pumpl{\"u}n.
\newblock The eigenspaces of twisted polynomials over cyclic field extensions.
\newblock {\em Analele {\c{s}}tiin{\c{t}}ifice ale Universit{\u{a}}{\c{t}}ii" Ovidius" Constan{\c{t}}a. Seria Matematic{\u{a}}}, 31(1):221--240, 2023.

\bibitem{petit1966certains}
J.-C. Petit.
\newblock Sur certains quasi-corps g{\'e}n{\'e}ralisant un type d'anneau-quotient.
\newblock {\em S{\'e}minaire Dubreil. Alg{\`e}bre et th{\'e}orie des nombres}, 20(2):1--18, 1966.

\bibitem{polverino2020number}
O.~Polverino and F.~Zullo.
\newblock On the number of roots of some linearized polynomials.
\newblock {\em Linear Algebra and its Applications}, 601:189--218, 2020.

\bibitem{pumpluen2018algebras}
S.~Pumpluen.
\newblock Algebras whose right nucleus is a central simple algebra.
\newblock {\em Journal of Pure and Applied Algebra}, 222(9):2773--2783, 2018.

\bibitem{pumpluen2022norm}
S.~Pumpluen and D.~Thompson.
\newblock The norm of a skew polynomial.
\newblock {\em Algebras and Representation Theory}, 25(4):869--887, 2022.

\bibitem{pumplun2011space}
S.~Pumpl{\"u}n and T.~Unger.
\newblock Space-time block codes from nonassociative division algebras.
\newblock {\em Advances in Mathematics of Communications}, 5(3):449--471, 2011.

\bibitem{pumplun2017finite}
S.~Pumplün.
\newblock Finite nonassociative algebras obtained from skew polynomials and possible applications to $(f,\sigma,\delta)$-codes.
\newblock {\em Advances in Mathematics of Communications}, 11(3):615--634, 2017.

\bibitem{roth1991maximum}
R.~M. Roth.
\newblock Maximum-rank array codes and their application to crisscross error correction.
\newblock {\em IEEE Transactions on Information Theory}, 37(2):328--336, 1991.

\bibitem{roth1996tensor}
R.~M. Roth.
\newblock Tensor codes for the rank metric.
\newblock {\em IEEE Transactions on Information Theory}, 42(6):2146--2157, 1996.

\bibitem{sandler1962}
R.~Sandler.
\newblock Autotopism groups of some finite non-associative algebras.
\newblock {\em American Journal of Mathematics}, 84(2):239--264, 1962.

\bibitem{sheekey2016new}
J.~Sheekey.
\newblock A new family of linear maximum rank distance codes.
\newblock {\em Advances in Mathematics of Communications}, 10(3):475, 2016.

\bibitem{sheekey2020new}
J.~Sheekey.
\newblock New semifields and new mrd codes from skew polynomial rings.
\newblock {\em Journal of the London Mathematical Society}, 101(1):432--456, 2020.

\bibitem{smaldore2024new}
V.~Smaldore, C.~Zanella, and F.~Zullo.
\newblock New scattered linearized quadrinomials.
\newblock {\em Linear Algebra and Its Applications}, 702:143--160, 2024.

\bibitem{thompson2021new}
D.~Thompson.
\newblock {\em New classes of nonassociative divison algebras and MRD codes}.
\newblock PhD thesis, University of Nottingham, 2021.

\bibitem{thompson2023division}
D.~Thompson and S.~Pumpluen.
\newblock Division algebras and mrd codes from skew polynomials.
\newblock {\em Glasgow Mathematical Journal}, pages 1--21, 2023.

\bibitem{trombetti2018new}
R.~Trombetti and Y.~Zhou.
\newblock A new family of {MRD} codes in {$\mathbb F_{q}^{2n\times 2n}$} with right and middle nuclei {$\mathbb F_{q^n}$}.
\newblock {\em IEEE Transactions on Information Theory}, 65(2):1054--1062, 2018.

\bibitem{vanderWaerden:1949}
B.~L. van~der Waerden.
\newblock {\em Modern Algebra}, volume~I.
\newblock Frederick Ungar Publishing Co., 1949.

\bibitem{wan1996aproof}
Z.~Wan.
\newblock A proof of the automorphism of linear groups over a field of characteristic $2$.
\newblock {\em Scientia Sinica}, 11:1183--1194, 1962.

\bibitem{zanella2020vertex}
C.~Zanella and F.~Zullo.
\newblock Vertex properties of maximum scattered linear sets of {PG}$(1,q^n)$.
\newblock {\em Discrete Mathematics}, 343(5):111800, 2020.

\bibitem{pottzhou2013}
Y.~Zhou and A.~Pott.
\newblock A new family of semifields with 2 parameters.
\newblock {\em Adv. Math.}, 234:43--60, 2013.

\end{thebibliography}

F. J. Lobillo\\
Departamento de \'Algebra,\\
Facultad de Ciencias,
Universidad de Granada,\\
Av. Fuente Nueva s/n, 18071 Granada, Spain\\
{{\em jlobillo@ugr.es}}

\medskip

Paolo Santonastaso\\
Dipartimento di Matematica e Fisica,\\
Universit\`a degli Studi della Campania ``Luigi Vanvitelli'',\\
I--\,81100 Caserta, Italy\\
{{\em paolo.santonastaso@unicampania.it}}

\medskip
John Sheekey \\
School of Mathematics and Statistics, Science Centre, \\
University College Dublin,\\
East Belfield Dublin 4 \\
{{\em \ john.sheekey@ucd.ie}}
\end{document}